\newcommand{\myemph}[1]{\emph{#1}}
\newtheoremstyle{exampstyle2}
  {13pt} 
  {13pt} 
  {\itshape} 
  {} 
  {\bfseries} 
  {.} 
  {.5em} 
  {} 
\theoremstyle{exampstyle2}
\newtheorem{proposition}{Proposition}[]
\newtheorem{theorem}[proposition]{Theorem}
\newtheorem{corollary}[proposition]{Corollary}
\newtheorem{lemma}[proposition]{Lemma}
\newtheorem{definition}[proposition]{Definition}
\newtheorem{example}[proposition]{Example}
\theoremstyle{remark}
\newtheorem*{remark}{Remark}
\newtheorem{observation}[proposition]{Observation}
\newcommand{\nc}{\newcommand}
\nc{\twoquote}{``}
\nc{\onequote}{`}
\nc{\cB}{\mathcal{B}}
\nc{\C}{\mathcal{C}}
\nc{\D}{\mathcal{D}}
\nc{\bF}{\mathbb{F}}
\nc{\Ff}{\mathbb{F}} 
\nc{\F}{\mathcal{F}}
\nc{\cF}{\mathcal{F}}
\nc{\G}{\mathcal{G}}
\nc{\gb}{\genfrac{[}{]}{0pt}{}}
\nc{\GL}{\mathsf{GL}}
\nc{\Gr}{\mathsf{Gr}}
\nc{\myfont}[1]{{\normalfont\sffamily\bfseries #1}}
\nc{\N}{{\bm{\mathsf{N}}}}
\nc{\Nn}{\mathbb{N}} 
\nc{\sS}{ \mathscr{S}}
\nc{\sC}{ \mathscr{C}}
\nc{\BF}{\cB^{\cF}}
\nc{\SF}{\cS^{\cF}}
\nc{\BH}{\cB^{H}}
\nc{\SH}{\cS^{H}}
\nc{\cH}{{\mathcal H}}
\nc{\cR}{{\mathcal R}}
\nc{\cA}{{\mathcal A}}
\nc{\cG}{{\mathcal G}}
\nc{\cC}{{\mathcal C}}
\nc{\cD}{{\mathcal D}}
\nc{\cO}{{\mathcal O}}
\nc{\cI}{{\mathcal I}}
\nc{\cY}{{\mathcal Y}}
\nc{\cK}{{\mathcal K}} 
\nc{\cX}{{\mathcal X}}
\nc{\cS}{{\mathcal S}}
\nc{\cE}{{\mathcal E}}
\nc{\cZ}{{\mathcal Z}}
\nc{\cQ}{{\mathcal Q}}
\nc{\cP}{{\mathcal P}}
\nc{\cL}{{\mathcal L}}
\nc{\cM}{{\mathcal M}}
\nc{\cT}{{\mathcal T}}
\nc{\cW}{{\mathcal W}}
\nc{\cU}{{\mathcal U}}
\nc{\cJ}{{\mathcal J}}
\nc{\cV}{{\mathcal V}}
\nc{\bH}{{\mathbb H}}
\nc{\bA}{{\mathbb A}}
\nc{\bG}{{\mathbb G}}
\nc{\bC}{{\mathbb C}}
\nc{\bO}{{\mathbb O}}
\nc{\bI}{{\mathbb I}}
\nc{\bB}{{\mathbb B}}
\nc{\bY}{{\mathbb Y}}
\nc{\bK}{{\mathbb K}} 
\nc{\bX}{{\mathbb X}}
\nc{\bS}{{\mathbb S}}
\nc{\bE}{{\mathbb E}}
\renewcommand{\bF}{{\mathbb F}}
\nc{\bZ}{{\mathbb Z}}
\nc{\bQ}{{\mathbb Q}}
\nc{\bN}{{\mathbb N}}
\nc{\bP}{{\mathbb P}}
\nc{\bL}{{\mathbb L}}
\nc{\bM}{{\mathbb M}}
\nc{\bT}{{\mathbb T}}
\nc{\bW}{{\mathbb W}}
\nc{\bU}{{\mathbb U}}
\nc{\bD}{{\mathbb D}}
\nc{\bJ}{{\mathbb J}}
\nc{\bV}{{\mathbb V}}
\nc{\bR}{{\mathbb R}}
\nc{\bfone}{\mathbf{1}}
\nc{\ppf}{\mathsf{ppf}}
\nc{\PR}{\mathbb{P}}
\nc{\pc}{\scalebox{1.1}{\(\wp\)}}
\newcommand{\spn}[1]{\left\langle #1 \right\rangle}
\newcommand{\withoutzero}{\setminus\{0\}}
\newcommand{\wt}{\operatorname{wt}}
\newcommand{\wtH}{\operatorname{wt}_{\operatorname{H}}}
\newcommand{\dist}{d}
\newcommand{\dH}{\dist_{\operatorname{H}}}
\newcommand{\dF}{\dist_{\cF}}
\newcommand{\stab}{\operatorname{Stab}}
\newcommand{\Span}{\operatorname{Span}}
\newcommand{\Vdim}{N}
\newcommand{\isomfun}{L}
\newcommand{\isomFun}{T}
\newcommand{\lift}[1]{\hat{#1}}
\newcommand{\Qwt}[1]{\wt_{\operatorname{quot},#1}}
\newcommand{\Hsubspace}{C}
\newcommand{\HSubspace}{\Hsubspace}
\newcommand{\HSubspaceQuotient}{ \left(\varphi,\Hsubspace, \rho, \bF_q^r \right)}
\newcommand{\HSubspaceQuotientNum}[1]{ \left(\varphi_{#1},\Hsubspace_{#1}, \rho_{#1}, \bF_q^{r_{#1}} \right)}
\newcommand{\PSubspace}{ \left(\iota,  W, \psi, \Ff_q^r \right)}
\newcommand{\PSubspaceNum}[1]{ \left(\iota_{#1}, W_{#1}, \psi_{#1},\Ff_q^{r_{#1}}\right)}
\nc{\proj}{\operatorname{\psi}}
\newcommand{\eqclass}[1]{\left[#1\right]}
\newcommand{\isom}{\operatorname{Isom}}
\newcommand{\aut}{\operatorname{Aut}}
\newcommand{\Fisom}[3]{\aut_{\cF}(#3:#1,#2)}
\newcommand{\Hisom}[3]{\aut_{\operatorname{H}}(#3:#1,#2)}
\newcommand{\Fixisom}[4]{\isom_{\cF}(#3,#4:#1,#2)}
\newcommand{\Hixisom}[4]{\isom_{\operatorname{H}}(#1,#2:#3,#4)}
\renewcommand{\ker}{\operatorname{Ker}}
\nc{\miso}{F}
\newcommand{\cfgcap}[3]{\langle #1 \cap #2 \rangle \cap \langle #1 \cap #3 \rangle}
\newcommand{\fgcap}[2]{\langle #1 \rangle \cap \langle #2 \rangle}
\title{\vspace{-2cm}Fundamental Notions of Projective and\\ Scale-Translation-Invariant Metrics in Coding Theory\vspace{-0.0cm}}
\date{}
\newcommand{\printthis}[2][false]{%
	\ifbool{#1}{%
		#2%
	}{%
	}%
}
\renewcommand{\author}[1]{\renewcommand{\@author}{\color{\@authorcolor}#1}}
\newcommand{\@authorcolor}{black}
\newcommand{\authorcolor}[1]{\renewcommand{\@authorcolor}{#1}}
\author{  
Gabor Riccardi$\text{}^1$, Hugo Sauerbier Couvée$\text{}^2$
\text{}\vspace{0.3cm}
\\
\normalsize{ $\text{}^1$University of Pavia}\\
\normalsize{$\text{}^2$Technical University of Munich}\\
\\
\normalsize{Emails: gabor.riccardi01@universitadipavia.it, hugo.sauerbier-couvee@tum.de}
}
\begin{document}

\maketitle

\begin{abstract}

Projective metrics on vector spaces over finite fields, introduced by Gabidulin and \mbox{Simonis} in 1997, 
generalize classical metrics in coding theory like the Hamming metric, rank metric, and combinatorial metrics. 
While these specific  metrics have been thoroughly investigated, the overarching theory of projective metrics has remained underdeveloped since their introduction.
In this paper, we present and develop the foundational theory of projective metrics, establishing several elementary key results on their characterizing properties, equivalence classes, isometries, constructions, connections with the Hamming metric, associated matroids, sphere sizes and Singleton-like bounds. Furthermore, some general aspects of scale-translation-invariant metrics are examined, with particular focus on their embeddings into larger projective metric spaces.
%
%
%
%
%
\end{abstract}

\section{Introduction}

Projective metrics
in a vector space \(V\) are defined by a subset of vectors \(\cF\) in \(V\). They were first introduced by Gabidulin and Simonis in \cite{gabidulin1997metrics, gabidulin1998metrics} as a generalization of some already well-studied metrics, like the \emph{Hamming metric}, the \emph{rank-metric}, \emph{combinatorial metrics} and \emph{tensor rank metric}  \cite{roth1996tensor, byrne2019tensor, byrne2021tensor}. 
An overview of the current state of research on the topic can be found in general surveys on metrics, such as \cite{gabidulin2012brief, firer2021alternative}.
Part of the work done on projective metrics focuses on the relation between the projective metric defined by \(\cF\) and the Hamming metric considered on the vector space of dimension \(|\cF|\). 
This gives rise to a code \(\pc\), called the \emph{parent code} of the projective metric.
Conversely, given any linear code \(\pc\) in \(V\), there is a corresponding projective metric whose parent code is \(\pc\). 
In \cite{gabidulin1997metrics}, it is shown that the projective weight of an element can be seen as the Hamming weight of a corresponding coset of the code \(\pc\), and that the projective weight distribution is the same as the coset weight distribution of \(\pc\), although these have been known to be hard to compute in general  \cite{helleseth1979weight, jurrius2009extended}. 
In \cite{gabidulin1998metrics}, a characterization of perfect codes with respect to the projective metric is given for the case where the minimum Hamming distance of $\pc$ is larger than its covering radius.
In \cite{gabidulin1997metrics, gabidulin1998metrics}, a class of projective metrics is investigated with parent codes that asymptotically attain the Gilbert-Varshamov bound. 
%
%
Several other metrics were later also studied for the first time in the context of projective metric, e.g. the \emph{term-rank} or \emph{cover metric} in \cite{gabidulin1998metrics}, the \emph{phase rotation metric} in \cite{gabidulin1998hard}, metrics based on BCH codes in \cite{gabidulin1998perfect}, and the \emph{Vandermonde $\cF$-metric} based on GRS codes in \cite{gabidulin2003codes, catterall2006public}, in which also fast decoders and cryptographic applications have been proposed.

\paragraph{Our contribution} Although a few specific projective metrics and the subfamily of combinatorial metrics have been studied in-depth, very little literature has been written on the general theory of projective metrics after its introduction. Our contributions are the following.

First, we introduce the notion of quotient spaces in the context of translation-invariant \newpage
\noindent
metrics or weights and describe some category-theoretical properties. 
Using quotients we extend the groundwork of the theory, show additional fundamental characteristics of projective metrics, and demonstrate their equivalence to metrics defined by sets of subspaces and to integral, convex, scale-translation-invariant metrics. We further show that any scale-translation-invariant metric can be embedded into a suitably larger projective metric space.  


Second, we make the relation between projective weights and their parent codes (already briefly noted in \cite{gabidulin1998metrics}) more precise, showing a bijective correspondence between equivalence classes of projective metrics and classes of Hamming-isometric subspaces of suitably large vector spaces. 
Additionally, we give a characterization of the isometries of projective metrics as the stabilizer of their parent codes.

Third, we explore ways of constructing new metrics from old ones and consider bounds on codes for projective metrics. In particular, we provide a  Singleton-like bound, and with the focus on sphere-packing bounds we investigate sphere sizes. We demonstrate that these are fully determined by an underlying matroid structure regarding \emph{extended spanning families}, i.e. if two projective metrics have matroid equivalent extended spanning families, then they have equal sphere sizes.

Throughout the paper we provide numerous examples, counterexamples, and a Sage script to facilitate these constructions. We also consider various connections to other fields of mathematics such as graph theory and matroid theory, highlighting potential directions for future research.

\paragraph{Structure of the paper}
\begin{enumerate}
\setcounter{enumi}{1}
    \item[Section  \ref{sec: Preliminaries on metrics}] In this section we present preliminary material on metrics. Subsection \ref{subsec: metrics and weight function} recalls the basic definitions of weights and metrics, while Subsection \ref{subsec: weighted vector spaces and contractions} introduces contractions and quotient metrics derived from other weight functions.
    
    \item[Section \ref{sec: introduction to projective metics}] In this section, we provide an in-depth overview of the definitions and characterizing properties of projective metrics. Subsection \ref{subsec: Why} motivates their study and contextualizes them among other translation- and scale-invariant metrics. Subsection \ref{subsec: projective weight basic proprieties} gives their formal definition and outlines key properties. In Subsection \ref{sec: examples}, we supply an extensive list of  examples.

    \item[Section \ref{sec: Projective metrics as quotients}] In this section, we connect projective metrics to quotient weights. In Subsection \ref{subsec: parent codes and parent function} we make explicit the bijective correspondence between equivalence classes of projective metrics and equivalence classes of linear codes under Hamming isometries. 
    In Subsection \ref{subsec: Isometries}, we characterize the isometry group of a projective metric as the subgroup of Hamming isometries that fix the parent code. We also prove that every quotient of the Hamming metric yields a projective metric. In Subsection \ref{subsec: decoding algorithm}, we show that a decoding algorithm for the parent code gives an algorithm for computing the projective weight.

    \item[Section \ref{section: embedding in projective metrics}] In this section, we show that any scale-invariant metric can be isometrically embedded into a projective metric defined on a larger vector space.

    \item[Section \ref{sec: Sphere sizes}] In this section, we study sphere sizes in projective metrics. In Subsection \ref{subsec: some obs} we observe that the parent function behaves like an isometry for small enough distances, we analyze how sphere sizes behave under unions  and we connect them to coset weight distributions. In Subsection \ref{subsectiond: perfect codes}, we derive sufficient conditions for codes to be perfect with respect to a projective metric. Finally, in Subsection \ref{subsec: matroids}, we introduce extended spanning families and prove that the matroid associated to such a family fully determines the sphere sizes.

    \item[Section \ref{sec: Singleton Bound}] In this section, we establish a Singleton-type bound for projective metrics and provide several properties and examples.

\newpage

\end{enumerate}

\text{}

\subsection*{Notation}
\noindent
Throughout this paper will use the following notation:\vspace{-0.5em}\\ 
\\
\noindent
\renewcommand{\arraystretch}{1.5}
\begin{tabular}{l l}
$\Nn$ & The set of the natural numbers $\{0,1,2,\ldots\}$;\\
$\Nn^+$ & The set of the positive natural numbers $\{1,2,3,\ldots\}$;\\
$[n]$ &  The set of positive natural numbers $\{1,2,\ldots,n\}$;\\
$|X|$ & The cardinality of a finite set $X$;\\
$\mathcal{P}(X)$ & The power set of $X$, i.e. the set of all subsets of $X$;\\
$\Ff$ & A finite field;\\
$\Ff_q$ & The (up to isomorphism unique) finite field of order $q$, with $q$ a prime power.\vspace{-0.2cm}\\ & For prime $q$, the elements of $\Ff_q$ will be represented by the integers $0,\ldots,q-1$;\\
$V$ & A finite dimensional vector space over a finite field. Usually we will consider $V = \Ff_q^N$;\\
$U \leq V$ & $U$ is a linear subspace of $V$;\\
$\dim_\Ff(U)$ & The dimension of $U$ as vector space over $\Ff$;\\
$\spn{X}$ & The linear span of a subset of vectors $X \subseteq V$. The span of a singleton $\{x\} \subseteq V$ is\vspace{-0.2cm}\\ & denoted by $\spn{x}$. If the span is taken with scalars from a specific field $\Ff$, we write $\spn{\cdot}_{\Ff}$.\vspace{-0.2cm}\\ & By definition $\spn{\emptyset} = \spn{0} = \{0\}$;\\
$\Gr_k(V)$ & The set of all $k$-dimensional linear subspaces of $V$, called a \emph{Grassmannian}.\\
$\GL(V)$ & The \emph{general linear group} of $V$, consisting of all invertible linear maps $L: V \to  V$. \\
$\cI(N,n)$ & Is the set of ordered indices \(\{(i_1,\ldots, i_n) \in \{1,\ldots,n\} |\; i_1 < \ldots < i_n\}\)
\end{tabular}

\text{}\\
\\
The word `linear' is often omitted when this is clear from the context.\\

\newpage

\newpage
\section{Preliminaries on metrics}
\label{sec: Preliminaries on metrics}

In this section, we consider any finite abelian group $(A,+)$. Later we will be particularly interested in finite dimensional vector spaces over a finite field $\Ff$.

The set $\bR \cup \{\infty\}$ will be endowed with the usual addition $+$ and order $\leq$ where as convention we have for all $r \in \bR \cup \{\infty\}$:
\begin{itemize}
    \item $r + \infty = \infty$ 
    \item $r \leq \infty$
\end{itemize}

\subsection{Metrics and weight functions}\label{subsec: metrics and weight function}

\begin{definition} \label{def: metric}
A \myfont{metric} or \myfont{distance function} on $A$ is a function $d(\cdot,\cdot) : A \times A \to \bR \cup \{\infty\}$  satisfying the following properties for all $x,y,z \in A$:\\
\\
\renewcommand{\arraystretch}{1.5}
\begin{tabular}{lll}
    (M0) & Non-negativity: & $d(x,y) \geq 0$   \\
    (M1) & Identity of indiscernibles: &  $d(x,y) = 0 \; \Leftrightarrow\;  x = y$\\
    (M2) & Symmetry: & $d(x,y) = d(y,x)$ \\
    (M3) & Triangle inequality: & $d(x,y) \leq d(x,z) + d(z,y)$
\end{tabular}
\noindent
\text{}\\
\vspace{0.2cm}
\text{}\\
In the case that $A$ is a module over a commutative ring $R$ (e.g. a vector space over a field $\Ff$), a \myfont{scale-invariant metric} on $A$ moreover satisfies the additional property\\~\\
\begin{tabular}{lll} \label{def: scale invariance}
    (M4) & Scale invariance: &  \quad \quad \quad \,\, $d(\alpha x, \alpha y) = d(x,y)$ \quad  for all $\alpha \in R\withoutzero$
\end{tabular}
\text{}\\
\vspace{0.2cm}
\text{}\\
Likewise, a \myfont{translation-invariant metric} is a metric satisfying \textit{(M0)-(M3)} and additionally\\~\\
\begin{tabular}{lll} \label{def: translation invariance}
    (M5) & Translation invariance: &  \quad $d(x + z, y + z) = d(x,y)$
\end{tabular}
\end{definition} 

\noindent In coding theory, it is regularly needed to measure the distance from $x$ to $x+e$ for given $x,e \in A$. This value $d(x,x+e)$ is called the \myemph{weight} of $e$ with respect to $x$. Since one often deals with a translation-invariant metric $d$, the distance $d(x,x+e)$ is in that case independent of $x$ and equals $d(0,e)$. We then call the function $\wt(\cdot) := d(0,\cdot)$ a \myemph{weight function}.
\begin{definition}
\label{def: weight function}
A \myfont{weight function} on $A$ is a function $\wt(\cdot) : A \to \bR \cup \{\infty\}$  satisfying the following properties for all $x,y \in A$:\\
\\
\renewcommand{\arraystretch}{1.5}
\begin{tabular}{lll}
    (W0) & Non-negativity: & $\wt(x) \geq 0$   \\
    (W1) & Positive definiteness: &  $\wt(x) = 0 \; \Leftrightarrow\;  x = 0$\\
    (W2) & Symmetry: &  $\wt(x) = \wt(-x)$ \\
    (W3) & Triangle inequality: & $\wt(x+y) \leq \wt(x) + \wt(y)$
\end{tabular}
\noindent
\text{}\\
\vspace{0.2cm}
\text{}\\
In the case that $A$ is a module over a commutative ring $R$ (e.g. a vector space over a field $\Ff$), a \myfont{scale-invariant weight function} on $A$ moreover satisfies this additional property:\\~\\
\begin{tabular}{lll}
    (W4) & Scale invariance: &  \quad \,\, $\wt(\alpha x) = \wt(x)$ \quad for all $\alpha \in R\withoutzero$
\end{tabular}
\end{definition} 

\newpage
\noindent
Note that \textit{(W0)} is automatically implied by \textit{(W1)-(W3)} since $0 = \wt(0) \leq \wt(x) + \wt(-x) = 2 \wt(x)$, and if \textit{(W4)} is satisfied, \textit{(W2)} is as well.
We can now easily describe a bijective relation between (scale-invariant) weight functions and (scale-invariant) translation-invariant metrics: 
\begin{itemize}
    \item Given a translation-invariant metric $d(\cdot,\cdot)$, its corresponding weight function is defined as \[\wt(x) := d(0,x).\]
     \item Given a weight function $\wt(\cdot)$, its corresponding translation-invariant metric is defined as \[d(x,y) := \wt(y-x).\]
\end{itemize}
Another way to view this correspondence is by the principle that we can describe a metric `up to its symmetries', in this case translations and/or scaling. 
When we introduce a metric by defining a weight function, we assume (unless stated otherwise) that the corresponding metric is the translation-invariant metric as defined above.

When a metric or weight function takes values in $\Nn \cup \{\infty\}$, it is called an \myfont{integral metric} or \myfont{integral weight function} respectively. Likewise, a metric or weight function is \myfont{finitely valued} if it takes values solely in $\bR$ (and not $\infty$).

\subsection{Weighted vector spaces and contractions}\label{subsec: weighted vector spaces and contractions}

Groups/modules/vector spaces/etc. endowed with a weight function are referred to as \myfont{weighted} groups/modules/vector spaces/etc. We refer the reader to \cite{Grandis2007} for an overview of these objects in a categorical context.
Of particular interest to us are \textit{finite dimensional} weighted vector spaces over \textit{finite fields}, although some of the theory in this section is also applicable to other vector spaces.

Let $(V, \wt_V)$, $(W,\wt_W)$ be finite weighted vector spaces over $\Ff_q$. A linear map $f: V \to W$ is called a \myfont{(weak) contraction} if $ \wt_V(v) \geq \wt_W(f(v))$ for all $v \in V$. In the special case that $\wt_V(v) = \wt_W(f(v))$ for all $v \in V$, then  $f$ must be injective and we call $f$ a \myfont{(linear) 
 embedding}. 
If a linear $f : V \to W$ is both an embedding and a bijection, then $f$ is a \myfont{(linear) isometry} or \myfont{isomorphism}, and the finite weighted vector spaces $(V, \wt_V)$ and $(W,\wt_W)$ are called \myfont{(linearly) isometric} or \myfont{isomorphic} indicated with the notation $(V, \wt_V) \cong (W,\wt_W)$. 
Note that linear isometries coincide with the usual category-theoretic notion of isomorphisms, i.e., contractions \(f : V \to W\) for which there exists a contraction \(g : W \to V\) such that \(g \circ f = \operatorname{id}_V\) and \(f \circ g = \operatorname{id}_W\), where \(\operatorname{id}_V\) and \(\operatorname{id}_W\) are the identity maps on \(V\) and \(W\), respectively.

Whenever $V \leq W$ is a linear subspace, we endow  $V$ (unless stated otherwise) with the weight $\wt_W|_V$ of $W$ restricted to $V$, and call $V$ simply a \myfont{subspace} (in the metric sense) of $W$.\\

We highlight an important elementary construction of weight functions on finite vector spaces that are induced by surjective linear maps:


\begin{definition}
\label{def: quotient weight}
Let \(\varphi: X \to Y\) be a \textit{surjective} linear map from a finite weighted vector space \((X, \wt_X)\) to a finite vector space \(Y\). The \textbf{quotient weight/metric} on \(Y\) (with respect to \(\varphi\)) is defined as
\[
\wt_{\operatorname{quot},\varphi}(y) := \min\{\wt_X(w) \ : \ w \in \varphi^{-1}(y)\}
\]
for all \(y \in Y\). 
\end{definition}
\medskip

\begin{remark}
We can naturally generalize to quotients with respect to any linear map $\varphi$ by setting $\Qwt{\varphi}(y) := \infty$ whenever $\varphi^{-1}(y)$ is empty. 
\end{remark}

\newpage

\begin{observation} \label{obs: quotient proprerties}
Note that the quotient weight satisfies the following easy-to-check properties:
\begin{enumerate}
\item $ \wt_{\operatorname{quot},\varphi}$ is a weight function satisfying  \textit{(W0) - (W3)}.
\item If $\wt_X$ is scale-invariant satisfying \textit{(W4)}, then so is $\wt_{\operatorname{quot},\varphi}$.
\item If $\wt_X$ is integral, then so is $\wt_{\operatorname{quot},\varphi}$.

    \item $\varphi$ is a contraction from $(X, \wt_X)$ to $(Y,\wt_{\operatorname{quot},\varphi})$.


\end{enumerate}
\end{observation}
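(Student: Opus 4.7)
The plan is to verify each of the four claims directly from the definition
\[
\wt_{\operatorname{quot},\varphi}(y) = \min\{\wt_X(w) : w \in \varphi^{-1}(y)\},
\]
exploiting the single structural fact that, because $\varphi$ is surjective and linear between finite-dimensional spaces, $\varphi^{-1}(y)$ is a non-empty finite coset of $\ker\varphi$ on which the minimum is attained. I expect no genuine obstacle; the whole observation is a short unpacking of the definition, and the only point that genuinely uses (W1) for $\wt_X$ rather than weaker hypotheses is the corresponding (W1) for the quotient.

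For (1) I would dispatch (W0)--(W3) in order. Non-negativity (W0) is immediate from (W0) for $\wt_X$. Positive definiteness (W1) uses $0 \in \varphi^{-1}(0)$ with $\wt_X(0)=0$ in one direction, and in the other direction a minimiser $w \in \varphi^{-1}(y)$ achieving $\wt_X(w)=0$ must equal $0$ by (W1) for $\wt_X$, forcing $y = \varphi(0) = 0$. Symmetry (W2) reduces to the identity $\varphi^{-1}(-y) = -\varphi^{-1}(y)$ combined with $\wt_X(w)=\wt_X(-w)$. The triangle inequality (W3) is the only part that combines two fibres: picking minimisers $w_i \in \varphi^{-1}(y_i)$, linearity places $w_1+w_2$ inside $\varphi^{-1}(y_1+y_2)$, so
\[
\wt_{\operatorname{quot},\varphi}(y_1+y_2) \le \wt_X(w_1+w_2) \le \wt_X(w_1)+\wt_X(w_2) = \wt_{\operatorname{quot},\varphi}(y_1)+\wt_{\operatorname{quot},\varphi}(y_2).
\]

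For (2) I would first verify the identity $\varphi^{-1}(\alpha y) = \alpha\,\varphi^{-1}(y)$ for every non-zero scalar $\alpha$, a short two-inclusion argument using multiplication by $\alpha$ and $\alpha^{-1}$. Combined with (W4) for $\wt_X$ this yields a weight-preserving bijection between the two sets being minimised, so the minima coincide. Claim (3) is the observation that the minimum of a non-empty finite subset of $\Nn$ lies in $\Nn$, so integrality of $\wt_X$ is inherited by the quotient. Claim (4) is immediate: for every $v \in X$, the preimage $\varphi^{-1}(\varphi(v))$ contains $v$ itself, so the minimum defining $\wt_{\operatorname{quot},\varphi}(\varphi(v))$ is at most $\wt_X(v)$, which is exactly the contraction condition.
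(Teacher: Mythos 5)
Your proof is correct and complete; the paper itself gives no argument for this Observation, labelling the properties ``easy-to-check,'' and your verification is exactly the routine unpacking of the definition that the authors intend (including the one point worth making explicit, namely that surjectivity and finiteness guarantee the minimum is attained over a non-empty fibre). The only cosmetic remark is that in item (3) the minimum lies in $\Nn \cup \{\infty\}$ rather than $\Nn$ when $\wt_X$ is allowed to take the value $\infty$, which is still integral in the paper's sense.
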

\noindent
Moreover, Theorem \ref{thm: universal property} states that the quotient weight satisfies a universal property similar to quotients of other types of objects, e.g. groups, modules, vector spaces, topological spaces, etc.
\begin{restatable}[]{theorem}{quotientweightproperties}\label{thm: universal property}
The quotient weight \(\wt_{\operatorname{quot},\varphi}\) with respect to a surjective linear map \(\varphi: (X, \wt_X) \twoheadrightarrow Y\) satisfies the following:
\end{restatable}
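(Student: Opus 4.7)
The plan is to prove the standard universal-property statements one expects for a quotient in the category of finite weighted vector spaces with contractions. I would organize the proof around two claims: (i) $\wt_{\operatorname{quot},\varphi}$ is the \emph{largest} weight on $Y$ that makes $\varphi$ a contraction, and (ii) any linear map out of $X$ that vanishes on $\ker\varphi$ and is a contraction factors uniquely through $\varphi$ as a contraction.

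For (i), I would argue directly from the definition. If $\wt'$ is any weight on $Y$ such that $\varphi:(X,\wt_X)\to(Y,\wt')$ is a contraction, then for every $y\in Y$ and every $x\in\varphi^{-1}(y)$ we have $\wt'(y)=\wt'(\varphi(x))\le\wt_X(x)$. Taking the minimum over all preimages gives $\wt'(y)\le\wt_{\operatorname{quot},\varphi}(y)$, which is exactly the maximality. Combined with item~4 of Observation~\ref{obs: quotient proprerties}, this characterizes $\wt_{\operatorname{quot},\varphi}$ uniquely as the maximal such weight.

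For (ii), the linear-algebraic factorization is routine: given a linear $f:X\to Z$ with $\ker\varphi\subseteq\ker f$, the first isomorphism theorem provides a unique linear $g:Y\to Z$ satisfying $g\circ\varphi=f$. The content is verifying that if $f:(X,\wt_X)\to(Z,\wt_Z)$ is a contraction, then so is $g:(Y,\wt_{\operatorname{quot},\varphi})\to(Z,\wt_Z)$. For any $y\in Y$, pick $x\in\varphi^{-1}(y)$ attaining the minimum in the definition of the quotient weight; such an $x$ exists by finiteness and surjectivity of $\varphi$. Then
\[
\wt_Z(g(y))=\wt_Z(g(\varphi(x)))=\wt_Z(f(x))\le\wt_X(x)=\wt_{\operatorname{quot},\varphi}(y),
\]
as required. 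Uniqueness of $g$ as a contraction is inherited from uniqueness of the linear factorization.

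I do not anticipate a serious obstacle: the surjectivity of $\varphi$ and the minimum-over-preimages definition do exactly the work one needs, and finiteness of $X$ guarantees that the infimum defining the quotient weight is attained, so no approximation argument is required. The only care point is to state the universal property cleanly enough that the two distinct facets (maximality among contraction-inducing weights, and factorization through $\varphi$) are both visibly established; everything else is a one-line verification from Observation~\ref{obs: quotient proprerties}.
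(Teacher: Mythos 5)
Your argument for the two parts you address is correct and essentially identical to the paper's: the contraction property of $g$ is verified by exactly the same computation (pick a minimizing preimage $x\in\varphi^{-1}(y)$, which exists by finiteness, and chase $\wt_Z(g(y))=\wt_Z(f(x))\le\wt_X(x)=\wt_{\operatorname{quot},\varphi}(y)$), and your direct proof of maximality is a harmless variant of the paper's observation that maximality is the special case $Z=Y$, $f=\varphi$ of the factorization property.

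The gap is one of coverage rather than of logic: the theorem asserts more than the two facets you prove. Part (2) also claims that $\ker(g)=\varphi(\ker(f))$ and that $\wt_{\operatorname{quot},f}=\wt_{\operatorname{quot},g}$, and there is a third item, the Isometry Property, stating that when $\ker(\varphi)=\ker(f)$ and $f$ is surjective the induced map $g$ is an isometry between the two quotient weights. None of these appear in your proposal. They are not difficult --- the kernel identity is a short diagram chase using surjectivity of $\varphi$; the equality of quotient weights follows from $f^{-1}(z)=\varphi^{-1}(g^{-1}(z))$ and unfolding the nested minima; and the isometry property then follows because $\ker(g)=\varphi(\ker(f))=0$ makes $g$ a bijection and $\Qwt{\varphi}(y)=\min_{x'\in\varphi^{-1}(y)}\wt_X(x')=\min_{x'\in f^{-1}(g(y))}\wt_X(x')=\Qwt{f}(g(y))$ --- but they are load-bearing elsewhere in the paper (the Isometry Property is invoked in Theorem \ref{thm: isometry isomorphism} and in Theorem \ref{thm: bijection Hamm subspace proj metric}), so a complete proof must include them.
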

{\itshape
\begin{enumerate}
    \item \textbf{(Maximality)} The quotient weight \(\wt_{\operatorname{quot},\varphi}\) is the largest weight on \(Y\) making \(\varphi\) a contraction: for any other weight \(\wt_Y\) such that \(\varphi: (X,\wt_X) \to (Y,\wt_Y)\) is a contraction, the identity map \(\operatorname{id}: (Y, \wt_{\operatorname{quot},\varphi}) \to (Y, \wt_Y)\) is a contraction.

    \item \textbf{(Universal Property)} If \(f: (X, \wt_X) \to (Z, \wt_Z)\) is a contraction with \(\ker(\varphi) \subseteq \ker(f)\), then there exists a unique contraction \(g: (Y, \wt_{\operatorname{quot},\varphi}) \to (Z, \wt_Z)\) such that \(f = g \circ \varphi\). Moreover,
    \[
    \ker(g) = \varphi(\ker(f)) \quad \text{and} \quad \wt_{\operatorname{quot},f} = \wt_{\operatorname{quot},g}.
    \]
    This property is summarized in the following commuting diagram:
    \[
    \begin{tikzcd}
    (X, \wt_X) \arrow[r, "\varphi", two heads] \arrow[rd, "f"'] & (Y, \wt_{\operatorname{quot},\varphi}) \arrow[d, dashed, "\exists! \, g"] \\
    & (Z, \wt_Z)
    \end{tikzcd}
    \]
    \item (\textbf{Isometry Property})\label{prop: Isometry Property} If \(\ker(\varphi) = \ker(f)\) and \(f\) is surjective, then \(g: (Y, \Qwt{\varphi}) \to (Z, \Qwt{f})\) as defined in (2) is an isometry. 
\end{enumerate}
}

The proof can be found in Appendix \ref{appendix: quotient weights}. 

\begin{corollary}\label{cor: composition of quotients}
Let  $\varphi : X \to Y$, $\psi: Y \to Z$ be surjective linear maps and consider the diagram\\
\[
\begin{tikzcd}
{(X,\wt_X)} \arrow[r, "\varphi", two heads] \arrow[rr, "\psi \, \circ \, \varphi", two heads, bend right, shift right] & {(Y, \wt_{\operatorname{quot}, \varphi})} \arrow[r, "\psi", two heads] & Z 
\end{tikzcd}
\]
of finite weighted vector spaces, with $ \wt_X$ any weight function on $X$. Then the quotient weights on $Z$ with respect to $\psi$ and $\psi \circ \varphi$ are equal, i.e.
\[
(Z, \wt_{\operatorname{quot}, \psi}) = (Z, \wt_{\operatorname{quot}, \psi \, \circ \, \varphi}).
\]
\end{corollary}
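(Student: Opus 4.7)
The plan is to reduce the claim to the simple observation that $(\psi \circ \varphi)^{-1}(z)$ decomposes as the disjoint union $\bigsqcup_{y \in \psi^{-1}(z)} \varphi^{-1}(y)$, which converts the single minimum defining $\wt_{\operatorname{quot}, \psi \circ \varphi}(z)$ into a minimum of minima; the inner minimum is exactly $\wt_{\operatorname{quot},\varphi}(y)$ and the outer one is then $\wt_{\operatorname{quot}, \psi}(z)$.

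First, I would fix $z \in Z$ and write out the definition of each side. The surjectivity hypotheses ensure that all fibers in play are nonempty, and finite-dimensionality guarantees that every minimum involved is attained. Then the fiber decomposition gives
\[
\wt_{\operatorname{quot}, \psi \circ \varphi}(z) \;=\; \min_{x \in \varphi^{-1}(\psi^{-1}(z))} \wt_X(x) \;=\; \min_{y \in \psi^{-1}(z)} \Bigl( \min_{x \in \varphi^{-1}(y)} \wt_X(x) \Bigr) \;=\; \min_{y \in \psi^{-1}(z)} \wt_{\operatorname{quot},\varphi}(y) \;=\; \wt_{\operatorname{quot},\psi}(z),
\]
where the middle equality is the standard rewriting of an infimum over a union of disjoint sets.

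A slightly slicker alternative I would mention is to invoke Theorem \ref{thm: universal property}(2) applied to $f := \psi \circ \varphi$: this is a surjective contraction from $(X, \wt_X)$ to $(Z, \wt_{\operatorname{quot}, \psi \circ \varphi})$ (by Observation \ref{obs: quotient proprerties}(4)) and satisfies $\ker(\varphi) \subseteq \ker(f)$, so the universal property produces a unique contraction $g : (Y, \wt_{\operatorname{quot},\varphi}) \to (Z, \wt_{\operatorname{quot}, \psi \circ \varphi})$ with $g \circ \varphi = f$. Surjectivity of $\varphi$ forces $g = \psi$, and the ``moreover'' clause $\wt_{\operatorname{quot}, f} = \wt_{\operatorname{quot}, g}$ is precisely the desired identity.

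No serious obstacle arises: the statement is really a bookkeeping fact about infima of a fixed function over nested fibers of composed surjections. The only minor care needed is to verify that every fiber encountered is nonempty (from surjectivity of $\varphi$ and $\psi$) and that minima are attained (from finiteness), both of which are already part of the standing hypotheses for quotient weights.
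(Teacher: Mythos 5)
Your proposal is correct, and in fact contains two valid arguments. Your primary argument --- the direct fiber decomposition $(\psi \circ \varphi)^{-1}(z) = \bigsqcup_{y \in \psi^{-1}(z)} \varphi^{-1}(y)$ turning a minimum over the composed fiber into a minimum of minima --- is genuinely different from, and more elementary than, what the paper does: the paper instead invokes the universal property (Theorem \ref{thm: universal property}) for $\psi$, $\varphi$ and $\psi \circ \varphi$ and concludes by observing that the identity maps between $(Z, \wt_{\operatorname{quot},\psi})$ and $(Z, \wt_{\operatorname{quot},\psi\circ\varphi})$ are contractions in both directions, hence isometries. Your direct computation buys self-containedness and makes transparent exactly why the two weights agree pointwise, at the cost of redoing by hand what the categorical machinery already packages; the paper's route (which coincides with your stated alternative via the ``moreover'' clause $\wt_{\operatorname{quot},f} = \wt_{\operatorname{quot},g}$ after noting $g = \psi$ by surjectivity of $\varphi$) is shorter and reuses the general lemma, which is the point of having stated it. Your attention to the nonemptiness of fibers and attainment of minima is appropriate but, as you note, already guaranteed by the standing hypotheses. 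No gaps.
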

This follows directly from applying the universal property to $\psi$, $\varphi$ and $\psi \circ \varphi$, and deducing that the identity maps between $(Z, \wt_{\operatorname{quot}, \psi})$ and $(Z, \wt_{\operatorname{quot}, \psi \, \circ \, \varphi})$ are contractions in both directions and thus isomorphisms.

\newpage

\section{Introduction to Projective Metrics}\label{sec: introduction to projective metics}

In this section we will give a formal introduction to projective metrics: in subsection \ref{subsec: Why}, we explore the motivation behind projective metrics, anticipating some of the results in this paper.  In subsection \ref{subsec: projective weight basic proprieties}, we give the definition of projective metrics and some basic properties. In addition, we discuss here and in later sections other equivalent definitions: 
\medskip

\begin{center}
\renewcommand{\arraystretch}{2.0}
\begin{tabular}{|c|c|}
\hline
Description & Statement \\
\hline
\hline
Minimum size subsets of family of projective points   &  Definition \ref{def: proj weight}\\
\hline
Minimum size subsets of family of subspaces &  Proposition \ref{prop: projective point form}\\
\hline
Integral convex scale-translation-invariant metrics & Proposition \ref{prop: projective iff convex scale trans}\\
\hline
Distances on Cayley graphs on vector spaces & Proposition \ref{prop: proj iff cayley}\\
\hline
Quotients of Hamming metric spaces & Proposition \ref{prop: proj iff quotient 1} \vspace{-4mm}\\
by subspaces of minimum distance $\geq 3$ & \\
\hline
Quotients of Hamming metric spaces & Corollary \ref{cor: reduction to parent functions} \vspace{-4mm} \\
by subspaces of any minimum distance & \\
\hline
Coset leader weights & Proposition \ref{prop: decoding weight algorithms eq}\\
\hline
\end{tabular}
\end{center}
\text{}
  
Lastly, in subsection \ref{sec: examples} we provide extensive examples of projective metrics and of operations for constructing new projective metrics from old.

\subsection{Why projective metrics?} \label{subsec: Why}

As we will later see in the examples in subsection \ref{sec: examples}, many of the classical metrics such as Hamming metric, rank metric, discrete metric, combinatorial metric etc. are projective metrics. In this section we further justify the study of projective metrics by showing that a metric endowed with some \onequote natural' properties are projective metrics: while metrics as defined in Definition \ref{def: metric} are very general, the definition of metrics used in practice often arises from the type of errors which occur on a specific communication channel. 

For example, let $V$ be a finite dimensional vector space over a finite field $\Ff_q$, and consider the transmission of vectors from $V$ over some noisy channel that adds error vectors. For every vector \(v \in V\), let \(\cE(v) \subset V\) be the set of possible `elementary' errors (e.g. a one symbol change) which may occur at \(v \in V\). Any error vector added during transmission is a sum of these elementary errors, and we always get a metric on $V$ by defining the distance between two vectors \(v_1,\;v_2 \in V\) as the shortest path of elementary errors needed to transform one vector into the other. 

More precisely, a path of elementary errors from $v_1$ to $v_2$ is a sequence $(e_1,e_2,\ldots,e_n)$ such that $e_i \in \cE(v_1+\sum_{j=1}^{i-1}e_i)$ for all $i =1,\ldots,n$  and  $v_2 = v_1 + \sum_{i=1}^ne_i$.  A distance function or metric corresponding to these elementary errors is then defined as 
\[d(v_1,v_2) \coloneqq \min\{n \in \bN\mid \text{ there exists an elementary error path from $v_1$ to $v_2$ of length $n$} \}. \] Metrics defined this way will always be integral and, since any path of errors between \(v_1\) and \(v_2\) of length \(d(v_1,v_2)\) is minimal by definition of \(d\), we have that \(d\) is an integrally convex metric:

\newpage

\begin{restatable}[]{definition}{convexity}
    \label{def: convex}
    An integral metric $d(\cdot,\cdot)$ is \myfont{(integrally) convex} if one of the following equivalent conditions holds:
    \begin{enumerate}
        \item For all $v_1,v_2 \in V$ with $d(v_1,v_2) < \infty$:
        \begin{align*} 
        d(v_1,v_2) = &  \min\{n \in \Nn \ \mid  \text{ there exists a sequence } (v_1 = x_0,x_1,\ldots,x_{n}=v_2)   \text{ with $x_i \in V$ }  \\  &\text{}   \qquad\qquad\qquad\qquad\qquad \text{ such that } \ d(x_k, x_{k+1}) = 1  \ \text{ for all } \  k = 0, \ldots, n-1 \}.
        \end{align*}
        \item  For all $v_1,v_2 \in V$ with $d(v_1,v_2) < \infty$ and all $i \in \{0,1,\ldots,d(v_1,v_2)\}$, there exists an $x \in V$ such that
    \[
    d(v_1,x) = i \quad\text{ and }\quad d(x,v_2) = d(v_1,v_2)-i.
    \]
    \end{enumerate}
\end{restatable}
See Appendix \ref{appendix: correction, detection and convexity} for the proof of the equivalence of these two conditions.  
\medskip

An alternative approach (see also \cite{silva2008rank}) to defining a metric in practice is to capture how much error can occur while still allowing correct decoding of a received vector, or at least detecting that an error has occurred beyond a recoverable threshold. We will briefly discuss some concepts informally here, and refer to Appendix \ref{appendix: correction, detection and convexity} for all formal definitions, statements, and proofs.

The \myemph{correction capability} \(\tau(v_1, v_2)\) (see Definition \ref{def: correction-normal}) is a measure of the maximum error magnitude for which, given a received vector \(v\), we can always determine whether the original transmitted vector was \(v_1\) or \(v_2\). For many metrics $d$ there is a direct relationship between \(\tau(v_1, v_2)\)  and \(d(v_1, v_2)\): we say that \(d\) is \myemph{correction-normal} if for all $v_1,v_2 \in V$ 
\[
\tau(v_1, v_2) = \left\lfloor \frac{d(v_1, v_2) - 1}{2} \right\rfloor
\]
holds. Although well-known metrics are often correction-normal, there are useful metrics (e.g. \cite{bitzer2024}) for which the above equality does not hold.

We are also interested in the number of errors \(\sigma(v_1, v_2)\) for which we can detect that an error has occurred but cannot determine whether the original vector was \(v_1\) or \(v_2\). Specifically, given a metric \(d(v_1, v_2)\) associated with the error, \(\sigma(v_1, v_2)\) is exactly equal to the minimal \(d(v, v_1)\), where \(v \in V\) is such that \(d(v, v_1) = d(v, v_2)\), provided such a \(v\) exists. Analogous to the Euclidean case, if the points equidistant from \(v_1\) and \(v_2\) lie on the intersection of spheres centered at \(v_1\) and \(v_2\) with radius equal to half the distance between them (when the distance is even; otherwise, the intersection is empty since the distance is an integer), then we say that the metric \(d(\cdot,\cdot)\) is \emph{equal-detection normal}. For any greater number than \(\sigma(v_1,v_2)\) of errors, \(v\) may become closer to \(v_2\) even if the original vector was \(v_1\). Conversely, for any smaller number of errors, \(v\) would remain within the ball of radius \(\sigma(v_1, v_2)\) centered at \(v_1\), allowing for correct decoding. We refer to \(\sigma(v_1, v_2)\) as the \myemph{equal-detection threshold} (see Definition \ref{def: equal-detection threshold}).  Furthermore, a metric is \myemph{equal-detection normal} if the distance between vectors satisfies
\[
d(v_1, v_2) = 2\sigma(v_1, v_2)
\]
whenever $\sigma(v_1, v_2)$ is defined. \\

As a bridge between the first approach, where a metric measures shortest paths of elementary errors, leading to the notion of convexity, and the second approach, where a metric measures the correction and detection of errors, leading to correction- and equal-detection normality, the following proposition states that these two approaches in fact coincide (see Appendix \ref{appendix: correction, detection and convexity}):

 \begin{restatable}[]{proposition}{convexcorrectdetect}
\label{prop: convexity, correction, detect}%
   An integral metric is convex if and only if it is correction-normal and equal-detection normal.
 \end{restatable}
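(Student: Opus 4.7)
The plan is to prove both directions of the biconditional, using throughout the equivalent second formulation of convexity given in Definition~\ref{def: convex}: that for every $v_1, v_2$ at finite integer distance $n = d(v_1, v_2)$ and every $i \in \{0, 1, \ldots, n\}$ there is an intermediate $x$ with $d(v_1, x) = i$ and $d(x, v_2) = n - i$. Both directions pivot on producing a geodesic ``(near-)midpoint'' between $v_1$ and $v_2$.

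For the forward direction, convexity immediately supplies an intermediate point at distances $(\lfloor n/2 \rfloor, \lceil n/2 \rceil)$ from the endpoints, which lies in $B_{\lceil n/2 \rceil}(v_1) \cap B_{\lceil n/2 \rceil}(v_2)$. Combined with the generic triangle-inequality bound $\tau(v_1, v_2) \ge \lfloor (n-1)/2 \rfloor$ (balls of that radius around distinct points at distance $n$ are disjoint), this pins $\tau(v_1, v_2) = \lfloor (n-1)/2 \rfloor$ and gives correction-normality. For equal-detection normality, when $n$ is even convexity supplies a true equidistant point at distance $n/2$ and the triangle inequality forces $\sigma(v_1, v_2) \ge n/2$, so $\sigma = n/2$ and $n = 2\sigma$; for odd $n$ I expect the argument to appeal to the appendix's precise notion of ``$\sigma$ defined'' to show that any equidistant point at distance $k$ forces $n = 2k$, leaving equal-detection normality vacuously satisfied in the odd case.

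For the backward direction, I would induct on $n = d(v_1, v_2)$, with $n \le 1$ trivial. For $n \ge 2$, correction-normality implies $B_{\lceil n/2 \rceil}(v_1) \cap B_{\lceil n/2 \rceil}(v_2) \ne \emptyset$; pick $m$ in this intersection. The triangle inequality, combined with the per-summand bound $\lceil n/2 \rceil$, constrains the pair $(d(v_1, m), d(m, v_2))$ to either a near-midpoint split with summands $\{\lfloor n/2 \rfloor, \lceil n/2 \rceil\}$ summing to $n$, or (only for odd $n$) the exact-midpoint option $((n+1)/2, (n+1)/2)$ summing to $n+1$. Equal-detection normality rules out the latter, since it would yield $\sigma(v_1, v_2) \le (n+1)/2$ and the required identity $n = 2\sigma$ would fail by parity. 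A true geodesic midpoint is therefore available; applying the inductive hypothesis to the strictly shorter pairs $(v_1, m)$ and $(m, v_2)$ gives intermediate points on each half, and a routine triangle-equality check shows the concatenated sequence realizes every distance $i \in \{0, \ldots, n\}$ from $v_1$, completing the induction.

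The main obstacle is the backward direction's odd-$n$ inductive step: equal-detection normality is deployed in exactly the delicate spot needed to exclude the ``exact midpoint with sum $n+1$'' configuration, which would otherwise give a decomposition one step too long to splice into a genuine geodesic between $v_1$ and $v_2$. A secondary point requiring care is the precise reading in Appendix~\ref{appendix: correction, detection and convexity} of when $\sigma(v_1, v_2)$ is ``defined'' for odd $n$, since the forward direction hinges on this condition being vacuous there.
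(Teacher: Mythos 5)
Your proposal is correct and follows essentially the same route as the paper's proof in Appendix E: convexity supplies (near-)midpoints that pin down $\tau$ and $\sigma_{\operatorname{eq}}$, and the converse is a strong induction in which correction-normality yields a candidate splitting point and equal-detection normality disposes of the only problematic configuration, namely when every point of $B_{\lceil n/2\rceil}(v_1)\cap B_{\lceil n/2\rceil}(v_2)$ is exactly equidistant with distance-sum $n+1$. The only phrasing to tighten is that equal-detection normality does not forbid an individual exact midpoint (it forbids the situation where no near-midpoint exists, since only then is $\sigma_{\operatorname{eq}}$ actually equal to $\lceil n/2\rceil$), but that is precisely what your induction needs and matches the paper's case analysis.
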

\noindent

While integrality and convexity follow naturally from a definition of a metric arising from elementary errors, the following two properties  only hold for a subset of all possible integral convex metrics, but are nevertheless natural in many applications:  \begin{itemize}
    \item Equality of the elementary error sets for all vector \(v_1,v_2 \in V\), i.e. \(\cE(v_1) = \cE(v_2)\). Then the corresponding metric is \emph{translation invariant}, \textit{(M4)};
    \item  The elementary error sets being closed under change of symbols, i.e., if \(e \in \cE(v_1)\), then \(\alpha e \in \cE(v_1)\) for any \(\alpha \in \bF\withoutzero\). The corresponding metric is \emph{scale invariant}, \textit{(M5)}.

\end{itemize}
The following proposition, proved in Appendix \ref{appendix: correction, detection and convexity}, comes directly from the equivalent definition \ref{def: convex} of  convexity and justifies the study of projective metrics:
 \begin{restatable}[]{proposition}{convexscaletranslation}\label{prop: projective iff convex scale trans}
    A metric is projective if and only if it is integral, convex, scale- and translation-invariant. 
\end{restatable}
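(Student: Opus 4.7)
The plan is to verify the biconditional directly from the definition of a projective metric: in the forward direction, check that each of the four properties holds; in the reverse direction, reconstruct a defining family $\mathcal{F}$ as the set of weight-$1$ vectors, and verify the original weight coincides with the projective weight of this family.

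For the forward direction, fix a family $\mathcal{F}$ of projective points with associated projective weight $\wt_{\mathcal{F}}$. Integrality is immediate, since the weight is a minimum of non-negative integers, and translation-invariance holds by construction, as we defined a weight function and set $d(x,y) = \wt_{\mathcal{F}}(y-x)$. Scale-invariance (M4) follows because $\mathcal{F}$ is closed under multiplication by nonzero scalars: any decomposition $v = \sum_{i=1}^{n} f_i$ with $f_i \in \mathcal{F}$ scales to $\alpha v = \sum_{i=1}^{n} \alpha f_i$ with each $\alpha f_i \in \mathcal{F}$, so $\wt_{\mathcal{F}}(\alpha v) = \wt_{\mathcal{F}}(v)$. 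Convexity is built into the minimal decomposition: if $\wt_{\mathcal{F}}(v) = n$ with $v = f_1 + \cdots + f_n$, the partial sums $x_k := f_1 + \cdots + f_k$ form a chain $0 = x_0, x_1, \ldots, x_n = v$ with consecutive distances $d(x_{k-1}, x_k) = \wt_{\mathcal{F}}(f_k) = 1$, witnessing condition (1) of Definition \ref{def: convex}.

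For the reverse direction, suppose $d$ is integral, convex, scale- and translation-invariant, and let $\wt(\cdot) := d(0, \cdot)$ be the associated weight. Define
\[
    \mathcal{F} := \{v \in V : \wt(v) = 1\}.
\]
Scale-invariance (W4) immediately gives that $\mathcal{F}$ is closed under multiplication by nonzero scalars, hence is a legitimate union of projective points; let $\wt_{\mathcal{F}}$ denote its associated projective weight. The remaining claim is $\wt = \wt_{\mathcal{F}}$. The inequality $\wt \leq \wt_{\mathcal{F}}$ follows from (W3): any decomposition $v = f_1 + \cdots + f_n$ with $f_i \in \mathcal{F}$ yields $\wt(v) \leq \sum_{i=1}^{n} \wt(f_i) = n$. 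Conversely, for any $v$ with $\wt(v) = n < \infty$, convexity provides a chain $0 = x_0, x_1, \ldots, x_n = v$ with $d(x_{k-1}, x_k) = 1$; setting $f_k := x_k - x_{k-1}$ and applying translation-invariance gives $\wt(f_k) = d(x_{k-1}, x_k) = 1$, so $f_k \in \mathcal{F}$ and $v = \sum_{k=1}^{n} f_k$, giving $\wt_{\mathcal{F}}(v) \leq n$. The case $\wt(v) = \infty$ is handled by the convention that the minimum over an empty set of decompositions is $\infty$, together with the contrapositive of the argument just given.

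The only mildly delicate step is translating the elementary-path decomposition supplied by convexity into a sum of weight-$1$ elements of the reconstructed family $\mathcal{F}$: this hinges on translation-invariance, which converts each path-step distance $d(x_{k-1}, x_k) = 1$ into a weight $\wt(x_k - x_{k-1}) = 1$. Scale-invariance, by contrast, enters this half of the proof only to guarantee that $\mathcal{F}$ is closed under nonzero scalar multiplication, so that it qualifies as a family of projective points rather than an arbitrary set of vectors. All four hypotheses are therefore used, and each plays an essentially distinct role.
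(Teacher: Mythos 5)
Your proof is correct and follows essentially the same route as the paper: the reverse direction (recovering $\mathcal{F}$ as the set of weight-one vectors and converting convexity chains into minimal linear combinations via translation-invariance) is the paper's argument almost verbatim, while in the forward direction you verify convexity via the chain formulation (condition (1) of Definition \ref{def: convex}) where the paper uses the intermediate-point formulation (condition (2)) with a triangle-inequality sandwich on the partial sums. The only step you leave implicit is that no chain of unit steps shorter than $\wt_{\mathcal{F}}(v)$ can exist --- needed to turn your exhibited chain into the equality demanded by condition (1) --- but this is an immediate consequence of the triangle inequality and does not constitute a gap.
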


Furthermore, in Section~\ref{section: embedding in projective metrics}, we show as a corollary of a more general result, that even metrics with weaker assumptions, namely those that are not convex, can be seen as submetrics of a projective metric on a larger ambient space.
\begin{restatable}[]{corollary}{scaleinvariantembedding}
    Any integral scale-translation-invariant metric space can be linearly embedded in a larger projective metric space.
\end{restatable}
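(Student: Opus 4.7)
The plan is to construct, starting from the given integral scale-translation-invariant weighted space $(W, \wt_W)$ over $\Ff_q$, an enlargement $V = W \oplus U$ together with a family $\cF \subseteq V$ so that the canonical inclusion $\iota\colon W \hookrightarrow V$ is an isometric embedding into $(V, \wt_{\cF})$. For each projective point $[w]$ in $W \withoutzero$ fix a representative $w^*$, set $n_{w^*} := \wt_W(w^*)$, and introduce $n_{w^*} - 1$ fresh auxiliary basis vectors $u_{[w^*], 1}, \ldots, u_{[w^*], n_{w^*} - 1}$ (none when $n_{w^*} \leq 1$) spanning a new subspace $U_{[w^*]}$. Let $U := \bigoplus_{[w^*]} U_{[w^*]}$ and $V := W \oplus U$. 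The family is
\[
\cF \;:=\; \bigcup_{[w^*]} \bigl\{ u_{[w^*],1},\ u_{[w^*],2}-u_{[w^*],1},\ \ldots,\ u_{[w^*],n_{w^*}-1}-u_{[w^*],n_{w^*}-2},\ w^* - u_{[w^*],n_{w^*}-1} \bigr\},
\]
degenerating to $\{w^*\}$ when $n_{w^*}=1$; note that the telescoping chain of $[w^*]$ consists of exactly $n_{w^*}$ vectors summing to $w^*$.

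The upper bound $\wt_{\cF}(\iota(w)) \leq \wt_W(w)$ is immediate: writing $w = \alpha w^*$ with $\alpha \in \Ff_q \withoutzero$, the telescoping chain of $[w^*]$ scaled by $\alpha$ sums to $\alpha w^* = w$ using $n_{w^*} = \wt_W(w)$ vectors of $\cF$. Conversely, suppose $\iota(w) = \sum_{i=1}^N \alpha_i f_i$ with distinct $f_i \in \cF$ and $\alpha_i \in \Ff_q \withoutzero$. Applying the projection $\pi\colon V \twoheadrightarrow W$ kills all auxiliary $f_i$ and sends each ``capping'' vector $w^* - u_{[w^*],n_{w^*}-1}$ to $w^*$, so $w = \sum_{j} \beta_j w^*_j$ where $j$ ranges over the capping vectors appearing (with coefficients $\beta_j \in \Ff_q \withoutzero$). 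For each such $j$, the vanishing of the $U_{[w^*_j]}$-coordinate of $\iota(w)$ forces the auxiliary part of the sum to cancel $-\beta_j u_{[w^*_j],n_{w^*_j}-1}$. Since $\{u_{[w^*_j], k} - u_{[w^*_j], k-1}\}_{k=1}^{n_{w^*_j}-1}$ (with $u_{[w^*_j],0} := 0$) is a basis of $U_{[w^*_j]}$, the unique expansion of $\beta_j u_{[w^*_j], n_{w^*_j}-1}$ in it has every coefficient equal to $\beta_j \neq 0$, so all $n_{w^*_j}-1$ auxiliary vectors of $U_{[w^*_j]}$ must appear. Hence $N \geq \sum_j n_{w^*_j}$, and the triangle inequality and scale invariance of $\wt_W$ give
\[
\wt_W(w) \;=\; \wt_W\!\Bigl(\textstyle\sum_j \beta_j w^*_j\Bigr) \;\leq\; \sum_j \wt_W(\beta_j w^*_j) \;=\; \sum_j n_{w^*_j} \;\leq\; N \;=\; \wt_{\cF}(\iota(w)),
\]
completing the lower bound.

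The main obstacle is the rigidity in the preceding step: that within each $U_{[w^*_j]}$ the telescoping chain forms a basis in which the ``capped'' vector has a unique all-$\beta_j$ expansion. This rigidity is exactly what forces any $\cF$-representation of $\iota(w)$ to pay the full triangle-inequality price in $(W, \wt_W)$, and informally it shows that the auxiliary ``detour'' vectors in $U$ manufacture the convexity that $\wt_W$ may otherwise lack (cf.~Proposition~\ref{prop: projective iff convex scale trans}). The case $\wt_W(w) = \infty$ is handled by omitting the corresponding chains from $\cF$: any finite $\cF$-representation of $\iota(w)$ would project to an expression for $w$ as an $\Ff_q$-combination of elements of finite $\wt_W$-weight, contradicting $\wt_W(w) = \infty$, so $\wt_{\cF}(\iota(w)) = \infty$ as required.
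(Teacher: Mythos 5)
Your proof is correct, and it reaches the goal by a more hands-on route than the paper. The paper's argument is indirect: it first realizes $(V,\wt_V)$ as a quotient of a ``free weighted space'' $(\Ff_q^N,\wt_{\operatorname{free},V})$ with one coordinate per projective point of $V$ (Lemma \ref{lemma: quotient of Hamming subspace}), embeds that free space into a Hamming space $\Ff_q^{\sum t_i}$ by repeating the $i$-th coordinate $t_i=\wt_V(v_i)$ times (Corollary \ref{cor: existence hamming subspace}), and then invokes the general bijection of Theorem \ref{thm: bijection Hamm subspace proj metric} (built on the universal property of quotient weights and Lemma \ref{lem: contraction composed embedding}) to turn this Hamming subspace into a projective metric space containing $(V,\wt_V)$ isometrically. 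Your construction produces essentially the same ambient space --- a telescoping chain of $n_{w^*}$ projective points summing to $w^*$ is exactly what the block of $t_i$ repeated coordinates becomes after the paper's quotient, and the dimensions $n+\sum(t_i-1)$ and family sizes $\sum t_i$ agree --- but you build the spanning family explicitly and verify the isometry by a direct coefficient-counting argument (the key rigidity being that the chain differences form a basis of $U_{[w^*]}$ in which $\beta u_{n-1}$ has all coefficients equal to $\beta$, forcing every chain vector to appear). What the paper's route buys is the stronger structural statement, namely the bijection \eqref{eq: bijection subspaces and projective metrics} between Hamming subspaces with $(V,\wt_V)$ as quotient and projective overspaces, of which the corollary is a byproduct; what your route buys is a short, self-contained proof that also covers weights taking the value $\infty$ (which Section \ref{section: embedding in projective metrics} excludes by its finitely-valued hypothesis). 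Two small points you leave implicit but which are routine: the elements of $\cF$ are pairwise linearly independent (chain vectors in distinct summands $U_{[w^*]}$, capping vectors distinguished by their $W$-components), and auxiliary vectors of a chain whose capping vector is absent cannot occur in a minimal representation, though your inequality $N\geq\sum_j n_{w^*_j}$ does not need this.
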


We interpret this as follows: among all possible integral scale-translation-invariant metrics, the convex ones, i.e. projective metrics, can be used for good ambient spaces. The remaining metrics are then subspaces or codes in these ambient spaces. 

\subsection{Projective weight functions and projective metrics}
\label{subsec: projective weight basic proprieties}
In this subsection, we present the main definitions regarding projective metrics. For the remainder of this section, let \(V\) be a finite dimensional vector space over a finite field $\Ff_q$.

\begin{definition}\label{def: proj weight}
    Given a subset of pairwise linearly independent vectors \(
\mathcal{F} \subset V
\), the \myfont{projective weight} \(\wt_{\mathcal{F}}(x)\) of \(x \in V\) with respect to \(\mathcal{F}\) is defined as the minimum cardinality of a subset \(I \subset \mathcal{F}\) that satisfies $x \in \spn{I}$, where $\spn{\cdot}$ denotes the linear span in $V$.
Otherwise, if \(x \notin \spn{\mathcal{F}}\), we set \(\wt_{\mathcal{F}}(x) := \infty\) as convention. In short,
\begin{equation*}
\wt_\F(x) := \min\left(\{\left|I\right| \ : \ I \subset \F,\, x \in \spn{I}\} \cup \{\infty\}\right).
\end{equation*}
The function \(\wt_{\mathcal{F}}: V \to \mathbb{N} \cup \{\infty\}\) sending $x$ to \(\wt_{\mathcal{F}}(x)\), is called the \myfont{projective weight function}.
\end{definition}

\begin{remark}
Note that we only require \textit{pairwise} linear independency, i.e. $0 \notin \F$ and if $x,y \in\F$ are distinct, then $x \neq \lambda y$ for any $\lambda \in \Ff_q$. Yet any three or more vectors in $\F$ may be linear dependent.  
\end{remark}

\medskip

The term \onequote projective weight' arises from the fact that replacing any element \(f \in \mathcal{F}\) with \(\lambda f\), where \(\lambda \in \mathbb{F}_q\setminus\{0\}\), results in the same weight function. Consequently, a projective weight function can equivalently be defined by a subset of points $\F$ in the projective space
\[\mathbb{P}(V) = (V \setminus \{0\}) / \sim \quad \text{ with } \quad x \sim y \, \Leftrightarrow \, x = \lambda y \quad \text{for some } \lambda \in \Ff_q \setminus \{0\} 
\]
for \(x, y \in V \setminus \{0\}\). This projective space is equivalently described by the set \(\Gr_1(V)\) of 1-dimensional subspaces $\spn{x} \subset V$, since $ x \sim y  \Leftrightarrow  \spn{x} = \spn{y}$. For this reason we will sometimes refer to \(\cF\) as a subset of \(\Gr_1(V)\).\\

The set $\F$ is called the \myfont{projective point family} of $\wt_\F$.  In general we will assume that $\spn{\F} = V$, in which case we call $\F$ a \myfont{spanning family}. The reason for this assumption is that the weight function $\wt_\F$ is finitely valued whenever $\F$ is a spanning family. Moreover, the family $\F$ is uniquely determined by the projective weight function $\wt_\F$:

\medskip
\begin{observation}
Let $\cF,\G \subset \Gr_1(V)$ be such that $\wt_\cF(\cdot) = \wt_\G(\cdot)$, then $\cF= \G$. 
\end{observation}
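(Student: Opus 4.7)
The plan is to show that the family $\cF$ can be recovered directly from the weight function $\wt_\cF$, by identifying $\cF$ as the set of 1-dimensional subspaces at which $\wt_\cF$ takes the value $1$. Once this characterization is established, the equality $\wt_\cF = \wt_\G$ immediately forces $\cF = \G$.

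More precisely, the first step is to establish the identity
\[
\cF = \{\spn{x} \ : \ x \in V,\ \wt_\cF(x) = 1\} \quad \text{inside } \Gr_1(V).
\]
For the inclusion ``$\subseteq$'': if $\spn{f} \in \cF$, then $f \in \spn{\{f\}}$ with $|\{f\}| = 1$, and since $f \neq 0$ (as $\cF$ consists of pairwise linearly independent, hence nonzero, vectors) we have $\wt_\cF(f) \geq 1$, so $\wt_\cF(f) = 1$. For the inclusion ``$\supseteq$'': if $\wt_\cF(x) = 1$, then by Definition \ref{def: proj weight} there exists some $f \in \cF$ with $x \in \spn{\{f\}} = \spn{f}$, and since $x \neq 0$ we get $\spn{x} = \spn{f} \in \cF$.

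The second step is trivial: the same argument applied to $\G$ gives
\[
\G = \{\spn{x} \ : \ x \in V,\ \wt_\G(x) = 1\},
\]
so the hypothesis $\wt_\cF(x) = \wt_\G(x)$ for all $x \in V$ yields $\cF = \G$ as subsets of $\Gr_1(V)$.

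There is no real obstacle here; the only subtlety is to remember that elements of $\cF$ are treated as projective points, so the correct recovery is via the map $x \mapsto \spn{x}$, not via picking a specific representative vector. Once this is phrased correctly, the proof is a one-line consequence of Definition \ref{def: proj weight}.
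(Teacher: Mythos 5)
Your proof is correct and follows exactly the paper's argument: the paper also recovers $\cF$ as $\{\spn{x} : \wt_\cF(x)=1\}$ via the chain of equivalences $\spn{x}\in\cF \Leftrightarrow \wt_\cF(x)=1 \Leftrightarrow \wt_\G(x)=1 \Leftrightarrow \spn{x}\in\G$. You have simply spelled out both inclusions in more detail, which is fine.
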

\medskip
\noindent
This follows directly from the fact that for every $x \in V\setminus\{0\}$ we have \begin{center}
    $\spn{x} \in \cF\;\Leftrightarrow\; \wt_\cF(x) = 1 \;\Leftrightarrow\; \wt_\G(x)=1 \;\Leftrightarrow\; \spn{x} \in \G$.
\end{center}

Gabidulin and Simonis in \cite{gabidulin1998metrics} first introduced the concept of projective weight functions as a special case of weight functions generated by families of subspaces (referred to as \(\cF\)-weights).

\begin{definition}\label{def: F-weight}
Let $\cF= \{F_1, \ldots, F_\N \} \subset \cP(V)$ be a family of subsets of $V$. Then, if  \( x \in \sum_{F \in \cF}\spn{F}\), the \myfont{$\bm{\cF}$-weight} $\wt_\cF(x)$ of a vector $x \in V$ is defined as the minimum cardinality of a set $I \subseteq [\N]$ that satisfies
\[
x \in \sum_{i \in I}\spn{F_i}.
\]
Otherwise, if \( x \notin \sum_{F \in \cF}\spn{F}\), we define $\wt_\cF(x)$ as \(\infty\). The function $\wt_\cF(\cdot) : V \to  \Nn \cup \{\infty\}$ sending $x$ to $\wt_\cF(x)$ is called the $\bm{\cF}$\textbf{-weight function} or the weight function generated by $\cF$. 

\end{definition}

A projective weight associated to a family \(\cF \subset \Gr_1(V)\) is also trivially an \(\cF\)-weight. The converse is also true; in Appendix \ref{appendix: F-metrics} we show that projective weights and \(\cF\)-weights are,  in fact, equivalent. This is summarized in the following proposition.
 
\begin{restatable}[]{proposition}{propProjectivePointForm}\label{prop: projective point form} Let $\cF\subseteq \mathcal{P}(V)$ be a family of subsets of $V$. Then the \myfont{projective point form} of $\cF$, defined as
\[
\ppf(\cF) := \bigcup_{F \in \cF} \Gr_1(\spn{F})  \;\;\subseteq\; \Gr_1(V),
\]
spans the same subspace spanned by \(\cF\), that is: \(\sum_{G \in \ppf(\cF)}G= \sum_{F \in \cF}\spn{F}\) and generates the same weight function as $\cF$, i.e. 
\[
\wt_\cF(x) = \wt_{\ppf(\cF)}(x)
\]
for all $x \in V$.
\end{restatable}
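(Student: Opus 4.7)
The plan is to prove both the span equality and the pointwise weight equality by unpacking the definitions and using the elementary observation that every $F \in \cF$ yields $\spn{F} = \sum_{G \in \Gr_1(\spn{F})} G$ (any nonzero $v \in \spn{F}$ lies in its own $1$-dimensional span), while every $G \in \ppf(\cF)$ is by construction contained in some $\spn{F}$. These two facts say precisely that the collections $\{\spn{F}\}_{F\in\cF}$ and $\ppf(\cF)$ generate the same subspaces, and that any sum realizing one can be transferred to a sum realizing the other without increasing cardinality.

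For the span equality $\sum_{G \in \ppf(\cF)} G = \sum_{F \in \cF} \spn{F}$ I would verify both inclusions directly. The inclusion $\subseteq$ is immediate since each $G \in \ppf(\cF)$ is a $1$-dim subspace of some $\spn{F}$, hence contained in $\sum_{F \in \cF}\spn{F}$. The reverse inclusion $\supseteq$ follows by writing each $\spn{F}$ as $\sum_{G \in \Gr_1(\spn{F})} G$ and noting $\Gr_1(\spn{F}) \subseteq \ppf(\cF)$.

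For the weight equality I would prove the two inequalities separately. For $\wt_{\ppf(\cF)}(x) \leq \wt_\cF(x)$: given a minimizing subfamily $\{F_{i_1}, \ldots, F_{i_k}\} \subseteq \cF$ and any decomposition $x = v_1 + \cdots + v_k$ with $v_j \in \spn{F_{i_j}}$, the set $J := \{\spn{v_j} : v_j \neq 0\}$ is a subset of $\ppf(\cF)$ of cardinality at most $k$ whose sum contains $x$. For the converse $\wt_\cF(x) \leq \wt_{\ppf(\cF)}(x)$: starting from a minimal $J = \{G_1, \ldots, G_m\} \subseteq \ppf(\cF)$ covering $x$, choose for each $G_j$ an index $i_j \in [\N]$ with $G_j \subseteq \spn{F_{i_j}}$; then the index set $I := \{i_1, \ldots, i_m\}$ has cardinality at most $m$ and $x \in \sum_{j} G_j \subseteq \sum_{i \in I} \spn{F_i}$.

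I do not expect a genuine obstacle: the argument amounts to careful unpacking of definitions. The only minor subtleties to track are that deduplication of $J$ or $I$ can only decrease cardinality (so the inequalities survive), and that the edge case where some $v_j$ in the decomposition vanishes is handled by simply omitting the corresponding $\spn{v_j}$ from $J$. The infinite-valued case is also trivial: both weights are $\infty$ exactly when $x \notin \sum_F \spn{F} = \sum_G G$, which is precisely the span equality just established.
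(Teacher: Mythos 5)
Your proposal is correct and follows essentially the same route as the paper's proof in Appendix C: decompose a minimal $\cF$-representation $x=\sum_j v_j$ with $v_j\in\spn{F_{i_j}}$ into the lines $\spn{v_j}\in\ppf(\cF)$ for one inequality, and pick for each $G_j\in\ppf(\cF)$ a containing $\spn{F_{i_j}}$ for the other. Your explicit handling of the span equality and of possible zero summands or coinciding choices only makes explicit what the paper treats as obvious.
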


\noindent

We observe that a projective weight $\wt_\F$ with spanning family $\F = \{f_1,f_2,\ldots,f_\N\}$ can also be seen as a quotient weight (see Definition \ref{def: quotient weight}) of the Hamming weight on the larger space \(\bF_q^{\N}\), given by the linear map \(\varphi: \bF_q^{\N} \to V \) sending the $i$-th basis vector \(e_i\) to \(f_i\). This aspect of projective weight functions will be further discussed in depth in section \ref{sec: Projective metrics as quotients}.


\text{}

The distance function $d_\F(x,y) := \wt_\F(y-x)$ corresponding to a projective weight $\wt_\F$ is called a \myfont{projective metric}. Since $\wt_\F$ satisfies all the axioms \textit{(W0)}-\textit{(W4)} of a scale-invariant weight function, any projective metric is a scale-translation-invariant metric. It turns out that for a scale-translation-invariant metric, integral convexity is necessary and sufficient for the metric to be projective.

\convexscaletranslation*

\begin{proof}
Consider a projective metric $d_\F(\cdot,\cdot)$ with spanning family $\F \subset V$, and let $x \in V$ with $\wt_\F(x) < \infty$.  Let $a$ be any integer between $0$ and $d := \wt_\F(x)$. Due to translation-invariance, it suffices to show the existence of a $y \in V$ with $\wt_\F(y) = a$ and $\wt_\F(x-y) = d-a$ to prove convexity.

By definition there is a minimum size subset \(I \subset \cF\) of size $d$ such that \( x = \sum_{f \in I}a_f f\) for some coefficients $a_f \in \Ff_q \setminus \{0\}$. Let \(J \subset I\) with \(|J| = a\). Let \( y = \sum_{f \in J} a_f f\) and \(y' = \sum_{f \in I \setminus J} a_f f\). Then \(\wt_{\cF}(y) \leq a\) and \(\wt_{\cF}(y') \leq |I \setminus J| = d-a\), and \(d = \wt_{\cF}(x) = \wt_{\cF}(y+y') \leq \wt_{\cF}(y) + \wt_{\cF}(y') \leq a + d - a = d  \). Thus \(\wt_{\cF}(y) = a\), \(\wt_{\cF}(y') = d-a\). From this we have \( \wt_{\cF}(y) = a\) and \( \wt_{\cF}(x-y) =  \wt_{\cF}(y') = d-a\).

For the converse, consider an integral convex, scale- and translation-invariant metric $d(\cdot,\cdot)$ with corresponding weight function \(\wt(\cdot)\). Let $v_1,v_2 \in V$ with $d(v_1,v_2) < \infty$. Then \(\wt(\cdot)\) is projective with spanning family equal to \(\cF \coloneqq \{v \in V \mid \wt(v)=1\}\), since by definition of integral convexity, there is a minimal length path  \(S = (v_1=x_0,\ldots, x_{d(x_1,x_2)}=v_2)\) with \(1=d(x_{i},x_{i+1}) = \wt(x_{i+1}-x_i)\) for all \(i = 0,\ldots,d(x_1,x_2)-1\). Thus \(x_{i+1}-x_i \in \cF\) and a minimal path \(S\) is equivalent to a minimal linear combination of elements in \(\cF\) adding to $v_2-v_1$.
\end{proof}

Recall that two weight functions $\wt_1(\cdot)$, $\wt_2(\cdot)$ on  $V$ are linearly isometric or isomorphic, denoted $\wt_1(\cdot) \cong \wt_2(\cdot)$, if there exists an invertible linear map $L \in \GL(V)$ such that $\wt_1(x) = \wt_2(L(x))$ for all $x \in V$. As convexity and scale/translation invariance are defined solely in terms of the weight function, addition, and scaling, all of which are preserved by linear isometries, it is not surprising that the property of `being a projective metric space' is well-defined with respect to isometries, a simple yet necessary requirement for this property to be natural. This is summarized in the following proposition, with a short proof provided in Appendix \ref{appendix: linear equivalence}:

\begin{restatable}[]{proposition}{propProjectiveMetricsEquivalence}
\label{prop projective metrics equivalence}
Let $\wt(\cdot)$ be a weight function and $\wt_\cF(\cdot)$ a projective weight function on $V$ with $\cF$ its projective point family. Then
\[
\wt(\cdot) \simeq \wt_\cF(\cdot)
\]
if and only if $\wt(\cdot)$ is also a projective weight function with projective point family
\[
L(\cF) := \left\{ L(\spn{f}) \;\,|\; \spn{f} \in \F\right\}
\]
for some invertible linear map $L \in \GL(V)$.
\end{restatable}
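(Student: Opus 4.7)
The plan is to prove both directions of the biconditional by showing how the projective weight function transforms under an invertible linear map, using the observation (already established in the excerpt) that a projective weight function is uniquely determined by its projective point family.

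First I would fix a convention: the definition of linear isometry in the paper states that $\wt_1 \simeq \wt_2$ iff there exists $L \in \GL(V)$ such that $\wt_1(x) = \wt_2(L(x))$ for all $x \in V$. The key computational lemma is that for any $L \in \GL(V)$ and any family $\cF \subset \Gr_1(V)$, the map $L$ induces a bijection $\cF \to L(\cF)$ sending $\spn{f}$ to $\spn{L(f)} = L(\spn{f})$, and this bijection preserves the property of being pairwise linearly independent (so $L(\cF)$ is a valid projective point family). Moreover, for any subset $I \subset \cF$ one has $L(\spn{I}) = \spn{L(I)}$, and hence $x \in \spn{L(I)}$ if and only if $L^{-1}(x) \in \spn{I}$. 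Taking minima over subsets of equal size yields the identity
\[
\wt_{L(\cF)}(x) \;=\; \wt_{\cF}(L^{-1}(x)) \quad \text{for all } x \in V.
\]

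With this identity in hand, both directions are essentially immediate. For the ``if'' direction, suppose $\wt(x) = \wt_{L(\cF)}(x)$ for some $L \in \GL(V)$. The identity above gives $\wt(x) = \wt_{\cF}(L^{-1}(x))$, so $L^{-1} \in \GL(V)$ witnesses $\wt \simeq \wt_{\cF}$. For the ``only if'' direction, suppose there exists $T \in \GL(V)$ with $\wt(x) = \wt_{\cF}(T(x))$ for all $x$. Setting $L := T^{-1}$ and applying the identity, we get $\wt(x) = \wt_{\cF}(L^{-1}(x)) = \wt_{L(\cF)}(x)$. Thus $\wt$ is the projective weight function with projective point family $L(\cF)$; by the uniqueness observation (immediately preceding Definition \ref{def: F-weight}), $L(\cF)$ is the unique such family.

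No step here is genuinely difficult: the only thing requiring any care is the verification of the identity $\wt_{L(\cF)}(x) = \wt_{\cF}(L^{-1}(x))$, which amounts to checking that the bijection $I \leftrightarrow L(I)$ between subsets of $\cF$ and subsets of $L(\cF)$ preserves cardinality and transports the membership relation $x \in \spn{\cdot}$ correctly. The statement that $L(\cF)$ is pairwise linearly independent should also be briefly mentioned, as this is required for $L(\cF)$ to qualify as a projective point family in the sense of Definition \ref{def: proj weight}.
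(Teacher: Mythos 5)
Your proposal is correct and follows essentially the same route as the paper's proof in Appendix \ref{appendix: linear equivalence}: both reduce everything to the identity $\wt_{L(\cF)}(x) = \wt_{\cF}(L^{-1}(x))$, obtained by transporting the membership relation $x \in \spn{I}$ through the bijection $I \leftrightarrow L(I)$, and then read off both directions of the biconditional. Your added remark that $L(\cF)$ remains pairwise linearly independent (so that it is a legitimate projective point family) is a small point the paper leaves implicit, but otherwise the arguments coincide.
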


\begin{observation}
    Seeing $\cF_1,\cF_2 \subset \Gr_1(V)$ as sets of points in the projective space, then $w_{\cF_1} \cong w_{\cF_2}$ if and only if there exists a homography mapping $\cF_1$ to $\cF_2$.
\end{observation}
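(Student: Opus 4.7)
The proof reduces to a single transformation law: for every $L \in \GL(V)$ and every subset $\cF \subset \Gr_1(V)$ of pairwise linearly independent projective points, one has
\[
\wt_{L(\cF)}(x) \;=\; \wt_\cF\bigl(L^{-1}(x)\bigr) \qquad \text{for all } x \in V.
\]
My first step is to establish this identity directly from Definition \ref{def: proj weight}. Since $L$ is a linear bijection, it induces a cardinality-preserving bijection between subsets $I \subseteq \cF$ and subsets $L(I) \subseteq L(\cF)$, and satisfies $L(\spn{I}) = \spn{L(I)}$. Hence $L^{-1}(x) \in \spn{I}$ is equivalent to $x \in \spn{L(I)}$, and taking the minimum cardinality on each side yields the displayed equality. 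I would also remark that $L(\cF)$ is again a family of pairwise linearly independent projective points (linear bijections preserve both nonzero-ness and non-proportionality), so that $\wt_{L(\cF)}$ is indeed well-defined as a projective weight function, and $L(\cF)$ spans $V$ whenever $\cF$ does.

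With this identity in hand, both implications are essentially one line. For the reverse direction, assume $\wt(\cdot) = \wt_{L(\cF)}(\cdot)$ for some $L \in \GL(V)$. Then the identity gives $\wt(x) = \wt_\cF(L^{-1}(x))$ for all $x \in V$, so $L^{-1} \in \GL(V)$ witnesses the isomorphism $\wt(\cdot) \cong \wt_\cF(\cdot)$.

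For the forward direction, assume $\wt(\cdot) \cong \wt_\cF(\cdot)$ via some $M \in \GL(V)$, i.e.\ $\wt(x) = \wt_\cF(M(x))$ for all $x$. Setting $L := M^{-1} \in \GL(V)$, the transformation law gives
\[
\wt(x) \;=\; \wt_\cF\bigl(L^{-1}(x)\bigr) \;=\; \wt_{L(\cF)}(x),
\]
so $\wt(\cdot)$ is itself projective with projective point family $L(\cF)$, as required.

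I do not expect any genuine obstacle here; the statement is essentially the observation that linear bijections push minimal spanning subfamilies forward to minimal spanning subfamilies of the same size. The only bookkeeping care needed is the swap between $L$ and $L^{-1}$, which is why I would first prove the transformation law as an independent lemma-style equation before invoking it in both directions.
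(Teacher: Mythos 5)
Your argument is correct and follows essentially the same route as the paper, which proves the underlying Proposition \ref{prop projective metrics equivalence} in Appendix \ref{appendix: linear equivalence} via exactly the transformation law $x \in \spn{I} \Leftrightarrow L(x) \in \spn{L(I)}$ applied in both directions, and treats the homography rephrasing as immediate. The only implicit step worth making explicit is that, since a projective weight function determines its projective point family uniquely, the conclusion that $\wt_{\cF_2}$ is projective with family $L(\cF_1)$ forces $L(\cF_1) = \cF_2$, so the class of $L$ in $\mathrm{PGL}(V)$ is indeed the required homography.
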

 
\medskip
 
Lastly, to any projective metric $d_\F$ on $V$ with family $\F$ we can naturally define the \myfont{associated Cayley graph} $\Gamma(V,S_\F)$ with vertex set $V$ and generating set $S_\F := \{\lambda f \mid \lambda \in \Ff_q\setminus\{0\}, \, f \in \F\}$. We can then view projective metrics in the context of distances on Cayley graphs:

\begin{proposition}\label{prop: proj iff cayley}
The projective metric $d_\F$ on a vector space $V$ is equal to the graph distance on the associated Cayley graph $\Gamma(V,S_\F)$.
\end{proposition}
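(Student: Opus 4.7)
The plan is to reduce the statement to a comparison of weight functions on either side and then match up minimal decompositions on the algebraic side with shortest paths on the combinatorial side.

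First I would use translation invariance to reduce to the case $u = 0$. Both metrics are translation-invariant: $\wt_\F$ is translation-invariant by construction, and the graph distance on any Cayley graph is translation-invariant because left-translation by any vector of $V$ is a graph automorphism of $\Gamma(V,S_\F)$. Hence it suffices to show
\[
\wt_\F(v) \;=\; d_\Gamma(0,v) \qquad \text{for all } v \in V,
\]
with both sides allowed to take the value $\infty$.

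For the inequality $d_\Gamma(0,v) \le \wt_\F(v)$, suppose $\wt_\F(v) = n < \infty$, so there exist distinct $f_1,\ldots,f_n \in \F$ and nonzero scalars $\lambda_1,\ldots,\lambda_n \in \Ff_q\setminus\{0\}$ with $v = \sum_{i=1}^n \lambda_i f_i$. Define partial sums $x_0 := 0$ and $x_k := \sum_{i=1}^k \lambda_i f_i$ for $k = 1,\ldots,n$. Then $x_n = v$ and each increment $x_{k+1} - x_k = \lambda_{k+1} f_{k+1}$ lies in $S_\F$, so $(x_0,x_1,\ldots,x_n)$ is a walk of length $n$ in $\Gamma(V,S_\F)$ from $0$ to $v$. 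Hence $d_\Gamma(0,v) \le n$.

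For the converse $\wt_\F(v) \le d_\Gamma(0,v)$, suppose $d_\Gamma(0,v) = n < \infty$ with a shortest path $0 = x_0, x_1, \ldots, x_n = v$. By definition of $S_\F$, each increment $x_{k+1}-x_k$ equals $\mu_k g_k$ for some $\mu_k \in \Ff_q\setminus\{0\}$ and $g_k \in \F$. Summing telescopically,
\[
v \;=\; \sum_{k=0}^{n-1} \mu_k g_k \;\in\; \spn{\{g_0,\ldots,g_{n-1}\}},
\]
and since the set $\{g_0,\ldots,g_{n-1}\} \subseteq \F$ has cardinality at most $n$, Definition \ref{def: proj weight} yields $\wt_\F(v) \le n$. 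The infinite case follows from the same observation: any vertex reachable from $0$ lies in $\spn{\F}$, so if $v \notin \spn{\F}$ then no walk exists and $d_\Gamma(0,v) = \infty = \wt_\F(v)$.

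I do not anticipate any real obstacle; the main conceptual point is simply that a minimal $\F$-decomposition of $v$ and a shortest $0$-to-$v$ walk in $\Gamma(V,S_\F)$ encode exactly the same data, with the partial sums of the decomposition providing the intermediate vertices and conversely the increments of the walk providing the summands. The one detail worth stating carefully is that in the forward direction the $g_k$ in a shortest path are automatically pairwise distinct (otherwise one could cancel or shorten), so the argument is consistent, though pairwise distinctness is not even needed for the inequality.
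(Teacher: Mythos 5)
Your proof is correct. The paper states Proposition \ref{prop: proj iff cayley} without an explicit proof, but the argument you give --- partial sums of a minimal $\F$-decomposition yield a walk, and telescoping the increments of a shortest walk yields a decomposition of length at most $n$ --- is exactly the ``minimal path $\leftrightarrow$ minimal linear combination'' correspondence the paper uses in its proof of Proposition \ref{prop: projective iff convex scale trans}, so you have simply made explicit the intended argument. Your closing remarks (that translation invariance reduces to weights, and that distinctness of the $g_k$ is not needed for the inequality) are accurate.
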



\subsection{Examples}\label{sec: examples}

In this section, we provide several examples of projective metrics on the vector space \(V = \mathbb{F}_q^N\) and introduce various operations on the spanning family to observe how more complex metrics can be obtained from simpler ones. Most of these examples were already noted in \cite{gabidulin1997metrics}; we report them here for ease of reference. Let \( \mathcal{B} \coloneqq \{e_1, \ldots, e_N\}\) be a basis of \(V\). 
\par 
In many cases, the set \(\mathcal{F}\) defining a projective weight \(\wt_{\mathcal{F}}\) is derived from a given set of vectors \(F \subset V\), which may not necessarily be pairwise linearly independent. The set \(\mathcal{F}\) is then chosen as a maximal subset of \(F\) such that all its vectors are pairwise independent.

For simplicity, we adopt a slight abuse of notation by identifying \(\mathcal{F}\) with \(F\). However, it is important to note that the size of the weight obtained from \(F\) can be strictly smaller than \(|F|\), as non-independent vectors are excluded to ensure pairwise independence.

\begin{example}
    \myfont{Hamming Metric.} The Hamming metric \(\wtH\) can be shown to be a projective metric by simply taking the spanning family to be the basis \(\cB\) itself.
    Here, the weight of a vector \(x \in V\) is the number of nonzero coordinates respect to the basis \(\cB\).
\end{example}

\begin{example}
    \myfont{Discrete Metric.} The discrete metric is defined as:
    \[
    \wt_{\operatorname{Dis}}(x) \coloneqq 
    \begin{cases} 
        0 & \text{if } x = 0, \\
        1 & \text{otherwise}.
    \end{cases}
    \]
    This is a projective metric with the spanning family equal to \(\Gr_1(V)\), the set of all one-dimensional subspaces of \(V\). The discrete metric assigns weight \(1\) to any nonzero vector, satisfying the projective metric definition.
\end{example}

\begin{example}\label{example: combinatorial metrics}
    \myfont{Combinatorial Metrics (over finite fields).} (See \cite{pinheiro2019combinatorial}). A combinatorial weight  \(\wt_{\operatorname{Combinatorial}}\) is an \(\mathcal{F}\)-weight (see Definition \ref{def: F-weight}), where \(\mathcal{F}\) is a family of subspaces \(\{F_1, \ldots, F_k\}\)  with each subspace \(F_j\) of the form
    \[
    F_j = \langle I_j \rangle \leq V
    \]
    for some \(I_j \subset \mathcal{B}\), where \(\mathcal{B}\) is a basis of \(V\). Since by Proposition \ref{prop: projective point form} all \(\cF\)-metrics are projective metrics,  combinatorial metric are as well.
\end{example}

\begin{example}
\label{example: Phase Rotation Metric}
    \myfont{Phase Rotation Metric.} (See \cite{gabidulin1998hard}). The phase rotation metric is a projective metric with spanning family:
    \[
    \mathcal{F} \coloneqq \cB \cup \{w\},
    \]
    where \(w \coloneqq \sum_{v \in \cB}v\).
    In figure \ref{fig: phase cayley}, we provide a small visualization for the Cayley graph \(\Gamma(V,S_{\cF})\) with \(V =\bF_2^4\), showcasing a shortest path in the phase rotation metric compared to the Hamming metric.
    
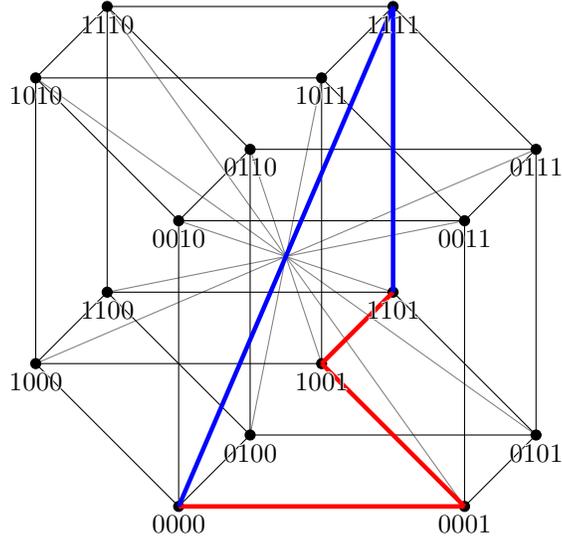
\begin{figure}[h!]
\centering
\begin{tikzpicture}[yscale=1.9, xscale=1.9, rotate=0]

\coordinate (0000) at (1,1);
\coordinate (0001) at (3,1);
\coordinate (0010) at (1,3);
\coordinate (0011) at (3,3);
\coordinate (0100) at (1.5,1.5);
\coordinate (0101) at (3.5,1.5);VVv
\coordinate (0110) at (1.5,3.5);
\coordinate (0111) at (3.5,3.5);

\coordinate (1000) at (0,2);
\coordinate (1001) at (2,2);
\coordinate (1010) at (0,4);
\coordinate (1011) at (2,4);
\coordinate (1100) at (0.5,2.5);
\coordinate (1101) at (2.5,2.5);
\coordinate (1110) at (0.5,4.5);
\coordinate (1111) at (2.5,4.5);

\draw[gray]
(0000) -- (1111)
(0010) -- (1101)
(0100) -- (1011)
(0110) -- (1001)
(1000) -- (0111)
(1010) -- (0101)
(1100) -- (0011)
(1110) -- (0001)
;

\draw
(0000) -- (0001)
(0010) -- (0011)
(0100) -- (0101)
(0110) -- (0111)
(1000) -- (1001)
(1010) -- (1011)
(1100) -- (1101)
(1110) -- (1111)
;

\draw
(0000) -- (0010)
(0001) -- (0011)
(0100) -- (0110)
(0101) -- (0111)
(1000) -- (1010)
(1001) -- (1011)
(1100) -- (1110)
(1101) -- (1111)
;

\draw
(0000) -- (0100)
(0001) -- (0101)
(0010) -- (0110)
(0011) -- (0111)
(1000) -- (1100)
(1001) -- (1101)
(1010) -- (1110)
(1011) -- (1111)
;

\draw
(0000) -- (1000)
(0001) -- (1001)
(0010) -- (1010)
(0011) -- (1011)
(0100) -- (1100)
(0101) -- (1101)
(0110) -- (1110)
(0111) -- (1111)
;

\filldraw[black] (0000)  circle (1pt);
\filldraw[black] (0001)  circle (1pt);
\filldraw[black] (0010)  circle (1pt);
\filldraw[black] (0011)  circle (1pt);
\filldraw[black] (0100)  circle (1pt);
\filldraw[black] (0101)  circle (1pt);
\filldraw[black] (0110)  circle (1pt);
\filldraw[black] (0111)  circle (1pt);

\filldraw[black] (1000)  circle (1pt);
\filldraw[black] (1001)  circle (1pt);
\filldraw[black] (1010)  circle (1pt);
\filldraw[black] (1011)  circle (1pt);
\filldraw[black] (1100)  circle (1pt);
\filldraw[black] (1101)  circle (1pt);
\filldraw[black] (1110)  circle (1pt);
\filldraw[black] (1111)  circle (1pt);

	\draw[line width=0.6mm, red]
	(0000) -- (0001)
	(0001) -- (1001)
	(1001) -- (1101)
	;
	\draw[line width=0.6mm, blue]
	(0000) -- (1111)
	(1111) -- (1101)
	;
\draw (0000) node[below] {\contour{white}{0000}};
\draw (0001) node[below] {\contour{white}{0001}};
\draw (0010) node[below] {\contour{white}{0010}};
\draw (0011) node[below] {\contour{white}{0011}};
\draw (0100) node[below] {\contour{white}{0100}};
\draw (0101) node[below] {\contour{white}{0101}};
\draw (0110) node[below] {\contour{white}{0110}};
\draw (0111) node[below] {\contour{white}{0111}};

\draw (1000) node[below] {\contour{white}{1000}};
\draw (1001) node[below] {\contour{white}{1001}};
\draw (1010) node[below] {\contour{white}{1010}};
\draw (1011) node[below] {\contour{white}{1011}};
\draw (1100) node[below] {\contour{white}{1100}};
\draw (1101) node[below] {\contour{white}{1101}};
\draw (1110) node[below] {\contour{white}{1110}};
\draw (1111) node[below] {\contour{white}{1111}};

\end{tikzpicture}
\caption{Cayley graph associated to the phase rotation metric on $\Ff_2^4$. Indicated are shortest paths from \((0,0,0,0)\) to \((1,1,0,1)\) in the phase rotation metric (blue) and in the Hamming metric (red).
}
\label{fig: phase cayley}
\end{figure}

\end{example}

\begin{example}
    \myfont{Rank Metric.} Consider \(V = \mathbb{F}_q^{n \times m}\), the vector space of \(n \times m\) matrices. The rank metric of a matrix \(A\) is defined as:
    \[
    \wt_{\operatorname{Rank}}(A) \coloneqq \operatorname{rank}(A).
    \]
    Any matrix of rank \(r\) can be written as the sum of \(r\) rank-1 matrices, thus the rank metric is a projective metric with spanning family
    \[
    \mathcal{F} \coloneqq \{A \in V \mid A \text{ has rank equal to } 1\}.
    \]
\end{example}

\begin{example}
    \myfont{Sum-Rank Metric.} Let \(V = \bigoplus_{i=1}^k \mathbb{F}_q^{n_i \times m_i}\). The sum-rank metric is defined as:
    \[
    \wt_{\operatorname{Sum-rank}}(A) \coloneqq \sum_{i=1}^k \wt_{\operatorname{Rank}}(A_i),
    \]
    where \(A = (A_1, \ldots, A_k)\) and each \(A_i \in \mathbb{F}_q^{n_i \times m_i}\). This metric is projective with spanning family \(\cF = \{A = (A_1, \ldots, A_k) \in V \mid A \text{ has sum-rank weight equal to } 1\}\).
\end{example}

\begin{example}
\myfont{Row Metric} and \myfont{Column Metric}. For \(V = \mathbb{F}_q^{n \times m}\), the row metric assigns weight based on the minimum number of rows needed to cover all nonzero entries of a matrix. Specifically, the row weight is defined as:
    \[
    \wt_{\operatorname{Row}}(A) = \min \{|R| \mid R \subseteq [n] \text{ such that if } A_{ij} \neq 0, \text{ then } i \in R \}.
    \]
   
    The row metric is a projective metric with spanning family consisting of matrices having all zero entries except for those in a single row. The column metric is defined as \(\wt_{\operatorname{Column}}(A) \coloneqq \wt_{\operatorname{Row}}(A^T)\).
\end{example}

\begin{example}
    \myfont{Cover Metric (or Term-Rank Metric \cite{gabidulin1971class}).} For \(V = \mathbb{F}_q^{n \times m}\), the cover metric assigns weight based on the minimum number of rows or columns needed to cover all non-zero entries of a matrix. Specifically, the cover weight is defined as:
    \[
    \wt_{\operatorname{Cover}}(A) = \min \{|R| + |C| \mid R \subseteq [n], C \subseteq [m], \text{ such that if } A_{ij} \neq 0, \text{ then } i \in R \text{ or } j \in C \}.
    \]
   
    The cover metric is a projective metric with the spanning family:
    \[
    \mathcal{F} = \{A \in V \mid \wt_{\operatorname{Cover}}(A) = 1\},
    \]
    which consists of matrices having all zero entries except for those in a single row or a single column. The cover metric is an example of a non-trivial combinatorial metric, where the sets $I_j$ in Example \ref{example: combinatorial metrics} are sets of coordinates in single rows or columns.
\end{example}

\newcommand{\outertimes}{\pmb{\otimes}}
\begin{example}\label{example: tensor rank} \myfont{Tensor Rank Metric}. (See \cite{ byrne2021tensor,roth1996tensor}). 
Let \( \mathcal{T} \) be a $d$-tensor in the space \( \mathbb{F}_q^{n_1 \times n_2 \times \cdots \times n_d} \). The \textit{tensor rank weight} of \( \mathcal{T} \), denoted as \(\wt_{\operatorname{Rank}}(\mathcal{T}) \), is defined as the minimum number $r$ of rank-1 tensors needed to express \( \mathcal{T} \) as their sum:
\[
\mathcal{T} = \sum_{k=1}^r u^{(1,k)} \outertimes u^{(2,k)} \outertimes \cdots \outertimes u^{(d,k)},
\]
where \( u^{(h,k)} \in \mathbb{F}_q^{n_k} \) for \( h = 1, 2, \ldots, d \), and \( \outertimes \) denotes the tensor (outer) product given by 
\[(u^{(1,k)} \outertimes u^{(2,k)} \outertimes \cdots \outertimes u^{(d,k)})_{i_1,\ldots,i_d} = \prod_{h=1}^du_{i_h}^{(h,k)}.\]
Clearly the tensor rank metric is a projective metric having as spanning family the set of rank-1 tensors. The tensor rank metric for $d=2$, i.e. for 2-tensors, coindices with the rank metric for matrices.
\end{example}

We now define the following operations  on projective metrics:
\begin{definition}[\textbf{Union of Weights}]
Let \(\cF\subset V\) and \(\cG \subset V\) define the projective weights \(\wt_{\cF}\) and \(\wt_{\cG}\). The union of their weights is defined as:
\[
\wt_{\cF} \cup \wt_{\cG} := \wt_{\cF \cup \cG}.
\]
which is a projective metric on \(V\)
\end{definition}

A particular case of the union of projective weights is given as follows. Let \(V_1\) and \(V_2\) be two vector spaces over \(\mathbb{F}_q\), with two projective metrics given by \(\mathcal{F}_1 \subset V_1\) and \(\mathcal{F}_2 \subset V_2\). Define \(V \coloneqq V_1 \oplus V_2\). By considering the natural inclusions \(i_1: V_1 \to V\) and \(i_2: V_2 \to V\), we can define the weights \(\wt_{i_1(\mathcal{F}_1)}\) and \(\wt_{i_2(\mathcal{F}_2)}\) on \(V\). 

We then define the \myfont{disjoint union} or \myfont{sum} of the projective weights \(\wt_{\mathcal{F}_1}\) and \(\wt_{\mathcal{F}_2}\) as:
\begin{equation}
    \label{def: disjoint union of projective weights}
\wt_{\mathcal{F}_1} \sqcup \wt_{\mathcal{F}_2} \coloneqq \wt_{i_1(\mathcal{F}_1)} \cup \wt_{i_2(\mathcal{F}_2)}.
\end{equation}
This construction provides a useful method to create simple projective weights in various dimensions by combining weights from different vector spaces.
An other useful operation on projective weights is given by the following:

\begin{definition}[\textbf{Tensor Product of Weights}]
Let \(\cF \subset \Ff_q^{N_1}\) and \(\cG \subset \Ff_q^{N_2}\) be two families defining projective weights $\wt_{\cF}$ and $\wt_{\cG}$. The tensor product of their weights is defined as:
\[
\wt_{\cF} \outertimes \wt_{\cG} := \wt_{\cF \outertimes \cG},
\]
where
\[
\cF \outertimes \cG := \{\langle f \outertimes  g \rangle \mid \langle f \rangle \in \cF, \langle g \rangle \in \cG\}.
\]
For the tensor product we regularly use the \emph{outer} tensor product $\outertimes_{\operatorname{out}}$, for two row vectors \(v \in \Ff_q^{N_1}\), \(w \in \Ff_q^{N_2}\) given by \(v \outertimes_{\operatorname{out}} w \coloneqq v^\top w \in \Ff_q^{N_1 \times N_2}\), or the \emph{Kronecker} tensor product $\outertimes_{\operatorname{Kron}}$, for two matrices \(A \in \bF_q^{n_1\times m_1}\), \(B \in \bF_q^{n_2 \times m_2}\) given by  
\[
A \outertimes_{\operatorname{Kron}} B :=
\begin{bmatrix}
A_{11} B & A_{12} B & \cdots & A_{1m_1} B \\\\
A_{21} B  & A_{22} B & \cdots & A_{2m_1} B \\\\
\vdots & \vdots & \ddots & \vdots \\\\
A_{n_11} B & A_{n_12} B & \cdots & A_{n_1m_1} B
\end{bmatrix} \in \bF_q^{n_1n_2 \times m_1m_2}.
\]

\end{definition}

We observe that the more ``complex" metrics introduced previously can be constructed from simpler ones using the following operations:
\begin{proposition}
The following relations hold:

\begin{enumerate}
    \item \textbf{Rank Weight:} 
    Consider the discrete weights \(\wt^n_{\operatorname{Dis}}\) and \(\wt^m_{\operatorname{Dis}}\) on \(\mathbb{F}_q^n\) and \(\mathbb{F}_q^m\), respectively, and the rank weight \(\wt_{\operatorname{Rank}}\) on \(\mathbb{F}_q^{n \times m}\). Then:
    \[
    \wt^n_{\operatorname{Dis}} \outertimes_{\operatorname{out}} \wt^m_{\operatorname{Dis}} = \wt_{\operatorname{Rank}}.
    \]

    \item \textbf{Sum-Rank Weight:}
    Let \(V_1 = \bF_q^l\) with Hamming weight \(\wtH\) and \(V_2 =  \mathbb{F}_q^{n \times m}\) with rank weight \(\wt_{\operatorname{Rank}}\). The Sum-Rank weight \(\wt_{\operatorname{Sum-rank}}\) on \(V = \bF_q^{n\times lm}\) satisfies:
    \[
    \wtH \outertimes_{\operatorname{Kron}} \wt_{\operatorname{Rank}} = \wt_{\operatorname{Sum-rank}}.
    \]

    \item \textbf{Row Weight:}
    Let \(\wtH\) be the Hamming weight on \(\mathbb{F}_q^n\) and \(\wt_{\operatorname{Dis}}\) the discrete weight on \(\mathbb{F}_q^m\). The Row weight on \(\bF_q^{n\times m}\) satisfies:
    \[
  \wtH \outertimes_{\operatorname{out}}   \wt_{\operatorname{Dis}} =  \wt_{\operatorname{Row}}.
    \]

    \item \textbf{Column Weight:}
    Let \(\wt_{\operatorname{Dis}}\) be the discrete weight on \(\mathbb{F}_q^n\) and \(\wtH\) the Hamming weight on \(\mathbb{F}_q^m\). The Column weight on \(\bF_q^{n\times m}\) satisfies:
    \[
     \wt_{\operatorname{Dis}} \outertimes_{\operatorname{out}}  \wtH = \wt_{\operatorname{Column}}.
    \]

    \item \textbf{Cover Weight:}
    The cover weight, combining row and column weights, satisfies:
    \[
    \wt_{\operatorname{Row}} \cup \wt_{\operatorname{Column}} = \wt_{\operatorname{Cover}}.
    \]
\end{enumerate}
\end{proposition}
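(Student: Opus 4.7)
The uniform strategy is to use the fact (noted right after Definition~\ref{def: proj weight}) that a projective weight is completely determined by its spanning family, so that proving equality of projective weights reduces to proving equality of spanning families. Thus for each of the five claims I would unfold the definitions of $\outertimes_{\operatorname{out}}$, $\outertimes_{\operatorname{Kron}}$, and $\cup$ on the spanning families, then verify that the resulting set of projective points coincides with the ``weight~$=1$'' family of the target metric. Once the spanning families match, the weights agree on every vector.

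For (1), the discrete weight on $\Ff_q^k$ has spanning family $\Gr_1(\Ff_q^k)$, so by definition $\wt^n_{\operatorname{Dis}} \outertimes_{\operatorname{out}} \wt^m_{\operatorname{Dis}}$ has spanning family $\{\langle v^\top w\rangle : 0\neq v\in\Ff_q^n,\ 0\neq w\in\Ff_q^m\}$, which is exactly the set of rank-$1$ matrices up to scaling, i.e.\ the spanning family of $\wt_{\operatorname{Rank}}$. Claims (3) and (4) follow by the same pattern: combining the Hamming family $\{\langle e_i\rangle\}$ with $\Gr_1(\Ff_q^m)$ via $\outertimes_{\operatorname{out}}$ yields matrices supported in a single row (resp.\ column), which is precisely the weight-$1$ family for $\wt_{\operatorname{Row}}$ (resp.\ $\wt_{\operatorname{Column}}$). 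Claim (5) is essentially immediate from the definition of union of weights together with the observation that the weight-$1$ elements of $\wt_{\operatorname{Cover}}$ are exactly those matrices whose nonzero entries all lie in one row or all in one column, i.e.\ the union of the row and column spanning families.

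The main step requiring slightly more care is (2), because here the two ingredients interact nontrivially through the Kronecker product. The spanning family of $\wtH \outertimes_{\operatorname{Kron}} \wt_{\operatorname{Rank}}$ is $\{\langle e_i \outertimes_{\operatorname{Kron}} R\rangle : i\in[l],\ R\in\Ff_q^{n\times m}\text{ of rank }1\}$, and unpacking the Kronecker formula shows that $e_i \outertimes_{\operatorname{Kron}} R$ is the block matrix in $\Ff_q^{n\times lm}$ whose $i$-th block of columns equals $R$ and whose other blocks are zero. These are exactly the elements of sum-rank weight one. It then remains to note that the projective weight induced by this family on a tuple $A=(A_1,\dots,A_l)$ equals $\sum_{i=1}^l\operatorname{rank}(A_i)$: the $\leq$ direction follows by writing each $A_i$ as a sum of $\operatorname{rank}(A_i)$ rank-$1$ matrices, and the $\geq$ direction follows from subadditivity of rank applied block-by-block.

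I expect the only real obstacle to be the bookkeeping in~(2), specifically aligning the column indexing of the Kronecker product with the block decomposition underlying the sum-rank metric; the other four items are essentially identifications of spanning families and require no computation beyond unfolding definitions.
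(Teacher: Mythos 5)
Your proposal is correct and follows essentially the same route as the paper: the authors also reduce each identity to checking that the operation on spanning families produces exactly the weight-one family of the target metric, working out the rank-weight case explicitly and noting the others follow in the same way. Your additional care in item (2) — verifying that the projective weight generated by the sum-rank-weight-one family really computes $\sum_i \operatorname{rank}(A_i)$ via subadditivity of rank block-by-block — is a sound and slightly more complete account of a step the paper leaves implicit.
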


\begin{proof}
We provide the proof for the relation \(\wt_{\text{Dis}} \outertimes \wt_{\text{Dis}} = \wt_{\text{Rank}}\). Let \(\mathcal{F}_n = \Gr_1(\mathbb{F}_q^n)\) and \(\mathcal{F}_m = \Gr_1(\mathbb{F}_q^m)\) be the spanning families inducing the discrete metrics on \(\mathbb{F}_q^n\) and \(\mathbb{F}_q^m\), respectively. Let \(\mathcal{F} = \{A \in \mathbb{F}_q^{n \times m} \mid \operatorname{rank}(A) = 1\}\) be the spanning family inducing the rank metric. 

For all \(\spn{v} \in \mathcal{F}_n\) and \(\spn{w} \in \mathcal{F}_m\), the tensor product \(v \outertimes w \in \mathbb{F}_q^{n \times m}\) is a rank-1 matrix. Thus, \(\mathcal{F}_n \outertimes \mathcal{F}_m \subseteq \mathcal{F}\). Conversely, any rank-1 matrix \(A\) can be written as \(v \outertimes w\) for some \(v \in \mathbb{F}_q^n\) and \(w \in \mathbb{F}_q^m\). Therefore, \(\mathcal{F}_n \outertimes \mathcal{F}_m = \mathcal{F}\), which implies \(\wt_{\operatorname{Dis}} \outertimes \wt_{\operatorname{Dis}} = \wt_{\operatorname{Rank}}\).

The proofs for the other relations follow similarly by demonstrating that the operations on the spanning families on the LHS correspond to the resulting spanning family for the metric on the RHS of the equation.
\end{proof}

\newpage
\section{Projective metrics as quotients}\label{sec: Projective metrics as quotients}
In this section we go in-depth into how projective metrics can be seen as quotient weights, and we will see that this induces many of their properties. The corresponding family of maps inducing projective weights were introduced in \cite{gabidulin1997metrics} as \textit{parent functions}. In the following, let $V$ be an $N$-dimensional vector space over $\Ff_q$ and $\F = \{\spn{f_1},\ldots,\spn{f_\N}\} \subset \Gr_1(V)$ a projective point family. Recall that we use the shorthand notation $[\N]$ for the integer set $\{1,\ldots,\N\}$.

\begin{definition}(Parent functions and Parent codes of $\cF$).\label{def: parent function and parent code}
A \myfont{parent function} of $\cF$ is a linear map $\varphi: \Ff_q^{\N} \to V$ such that 
\[
\left\{ \spn{\varphi( e_i )}  \mid i \in [\N] \right\} = \F
\]
where  $e_i$ is the $i$-th standard basis vector of $\Ff_q^\N$.
I.e., there is a permutation $\sigma$ of $[\N]$ and scalars $\lambda_i \in \Ff_q\setminus \{0\}$ such that for all $i \in [\N]$
\[ \varphi( e_i )  = \lambda_i f_{\sigma(i)}.
\]
A \myfont{parent code} of $\cF$ is a code \(\pc\subset \bF_q^{\N}\) such that $\pc= \ker(\varphi)$ where \(\varphi\) is a parent function of $\cF$.
\end{definition}

The main importance of parent functions is their connecting role between projective metrics and the Hamming metric. Namely, a projective weight function is equal to the quotient weight induced by a corresponding parent function, which follows by direct calculation.
Moreover, a parent code has minimum Hamming distance $\geq 3$, a direct result from the fact that the vectors in a projective point family are pairwise linear dependent.

\begin{proposition}\label{prop: proj iff quotient 1}
Consider the space $\Ff_q^\N$ endowed with the Hamming metric. If $\varphi: \Ff_q^\N \to V$ is a parent function of $\F$, then
\[
\wt_\F = \Qwt{\varphi}, 
\]
and the parent code $\pc = \ker(\varphi)$ has minimum Hamming distance
\[
d_{\operatorname{H}}(\pc) := \min_{\substack{x,y \in \scalebox{0.8}{\pc} \\ x \neq y}} \{d_{\operatorname{H}}(x,y)\} \geq 3. 
\]
\end{proposition}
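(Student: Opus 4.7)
The plan is to unpack both claims directly from the definition of a parent function, using the identity $\varphi(e_i) = \lambda_i f_{\sigma(i)}$ with $\lambda_i \in \Ff_q \setminus \{0\}$ and $\sigma$ a permutation of $[\N]$.

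For the equality $\wt_\F = \Qwt{\varphi}$, I would prove the two inequalities separately. For the $\leq$ direction, I would start with any $w \in \varphi^{-1}(y)$ of minimum Hamming weight $k = \Qwt{\varphi}(y)$, write $w = \sum_{i \in J} w_i e_i$ with $J = \supp(w)$ and $|J| = k$, and apply $\varphi$ to get
\[
y = \varphi(w) = \sum_{i \in J} w_i \lambda_i \, f_{\sigma(i)},
\]
which exhibits $y$ as a linear combination of the $k$ projective points $\{\spn{f_{\sigma(i)}} : i \in J\} \subseteq \F$, yielding $\wt_\F(y) \leq k$. For the reverse inequality, I would take a minimum subset $I \subseteq \F$ of size $\wt_\F(y)$ with $y = \sum_{\spn{f_j} \in I} a_j f_j$, and lift it to a preimage $w = \sum_j (a_j/\lambda_{\sigma^{-1}(j)}) \, e_{\sigma^{-1}(j)}$, whose Hamming weight is at most $\wt_\F(y)$, giving $\Qwt{\varphi}(y) \leq \wt_\F(y)$.

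For the minimum distance bound, I would argue contrapositively: suppose $c \in \pc \setminus \{0\}$ has $\wtH(c) \leq 2$. If $\wtH(c) = 1$, then $c = a e_i$ for some $a \neq 0$, so $0 = \varphi(c) = a\lambda_i f_{\sigma(i)}$, which is impossible since $a, \lambda_i \neq 0$ and $f_{\sigma(i)} \neq 0$ (as $\F \subset \Gr_1(V)$ excludes the zero subspace). If $\wtH(c) = 2$, then $c = a e_i + b e_j$ with $i \neq j$ and $a, b \neq 0$, so $a\lambda_i f_{\sigma(i)} = -b\lambda_j f_{\sigma(j)}$ forces $\spn{f_{\sigma(i)}} = \spn{f_{\sigma(j)}}$, contradicting the fact that the points in $\F$ are distinct (equivalently, the $f_k$ are pairwise linearly independent).

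I expect no genuine obstacle here: once one observes that $\varphi$ simply rescales and permutes the basis, both statements reduce to bookkeeping. The only subtle point is remembering that pairwise linear independence of $\F$ (rather than full linear independence) is precisely what rules out Hamming weight~$2$ codewords, and no stronger independence is invoked, which is exactly why the bound is $\geq 3$ and generally not better.
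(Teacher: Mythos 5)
Your proof is correct and is exactly the ``direct calculation'' the paper alludes to: the paper gives no formal proof, only noting that the weight identity follows by direct computation and that the distance bound follows from pairwise linear independence of $\F$, which are precisely the two arguments you spell out. The only (harmless) omission is the degenerate case $y \notin \spn{\F}$, where both sides are $\infty$ by the stated conventions.
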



As often happens with projective metrics, a notion for metrics corresponds to an `equivalent' notion for (parent) codes.  
For example, as we will see, knowing the sphere sizes or weight distribution of a projective metric is equivalent to knowing the coset weight distribution of a parent code; a  code is perfect for a projective metric if its inverse image under a parent function is a perfect code containing a parent code; decoding algorithms for a parent code are equivalent to an algorithm that calculates the projective weight, and so on.

\begin{center}

\renewcommand{\arraystretch}{2.0}
\begin{tabular}{|c|c|c|}
\hline
Notion for metrics & Corresponding notion for parent codes & Statement \\
\hline
\hline
Weight distribution & Coset leader weight distribution & Proposition \ref{prop: bijection parent codes and projective weights}\\
\hline
Weight calculation & Decoding to parent codeword & Proposition \ref{prop: decoding weight algorithms eq}\\
\hline
$\F$-perfect code & Hamming-perfect code containing parent code & Proposition \ref{prop: F-perfect codes} \\
\hline

\(\cF\)-isometry & Hamming isometry fixing parent code & Corollary \ref{cor: group isom stab} \\
\hline

\end{tabular}
\end{center}

\newpage
In Proposition \ref{prop: bijection parent codes and projective weights} we make explicit the bijection between subspaces of \(\bF_q^{\N}\) (corresponding to parent codes) and projective metrics.
In Corollary \ref{cor: reduction to parent functions} we see that any quotient map inducing the projective weight can be \onequote\onequote reduced" to a parent function. 
In Subsection \ref{subsec: Isometries}, Theorem \ref{thm: isometry isomorphism} provides a bijection between projective isometries and certain Hamming isometries. As a corollary (Corollary \ref{cor: group isom stab}) of this result we obtain the the isometry group from \(V\) onto itself is group isomorphic to the stabilizer of the parent code respect to the Hamming isometries in \(\bF_q^{\N}\).
Lastly, in Subsection \ref{subsec: decoding algorithm} we show that if a decoding algorithm is known for the parent code than we can use it to calculate the projective weight.

\subsection{Parent codes and parent functions}\label{subsec: parent codes and parent function}
Observe that a function  $\varphi: \Ff_q^{\N} \to V$ is a parent function of a projective weight if and only if \(\left\{ \varphi( e_i )  \mid i \in [\N] \right\}\) are pairwise linearly independent. Fixing a basis in \(V\), we get a correspondence between parent functions from \(\bF_q^{\N}\) to \(V\) and matrices of size \(\N \times N\) with pairwise linearly independent rows.
 Since any non-trivial projective point set has more than one parent function, and thus parent code, it is natural to ask what relationship is there between different parent codes associated to the same projective point set. 
 
 As briefly noted in \cite{gabidulin1998metrics} we see that they are Hamming equivalent: the set of parent codes of a projective metrics is equal to the Hamming equivalence class of any of the parent codes. We say that \( \C_1,\C_2 \subset V\) are \myfont{Hamming equivalent}, $\C_1 \equiv_{\operatorname{H}} \C_2$, if there exists $L:V \to  V$ such that $L(\C_1) = L(\C_2)$ and $L$ is a linear isometry respect to the Hamming metric. It is a known fact that if $L$ is a linear isometry respect to the Hamming metric then it can be written as the product of a permutation matrix $P$ and an invertible, diagonal matrix $\Lambda$. 

\begin{observation}
\label{obs: the parent codes of a spanning family F, are all Hamming equivalent}
   All parent codes of a projective weight \(\wt_{\cF}\) are Hamming equivalent.
\end{observation}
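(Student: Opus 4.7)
The plan is to exhibit an explicit monomial transformation (i.e.\ a permutation composed with an invertible diagonal map) taking one parent code onto another; since such maps are precisely the linear Hamming isometries of $\Ff_q^\N$, this gives Hamming equivalence immediately.

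Given two parent functions $\varphi_1,\varphi_2 : \Ff_q^\N \to V$ of the same projective family $\cF = \{\langle f_1 \rangle, \ldots, \langle f_\N \rangle\}$, I would first unpack Definition \ref{def: parent function and parent code} to write
\[
\varphi_k(e_i) \;=\; \lambda_i^{(k)}\, f_{\sigma_k(i)} \qquad (k = 1,2),
\]
where $\sigma_k$ is a permutation of $[\N]$ and $\lambda_i^{(k)} \in \Ff_q \setminus\{0\}$.

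Next, I would define $L : \Ff_q^\N \to \Ff_q^\N$ as the monomial map sending $e_i \mapsto \mu_i\, e_{\tau(i)}$, with the combinatorial data
\[
\tau \;:=\; \sigma_2^{-1} \circ \sigma_1, \qquad \mu_i \;:=\; \lambda_i^{(1)} / \lambda_{\tau(i)}^{(2)}.
\]
A direct check on basis vectors then yields
\[
\varphi_2\bigl(L(e_i)\bigr) \;=\; \mu_i\, \lambda_{\tau(i)}^{(2)}\, f_{\sigma_2(\tau(i))} \;=\; \lambda_i^{(1)}\, f_{\sigma_1(i)} \;=\; \varphi_1(e_i),
\]
so by linearity $\varphi_2 \circ L = \varphi_1$. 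Consequently $\pc_1 = \ker(\varphi_1) = L^{-1}(\ker(\varphi_2)) = L^{-1}(\pc_2)$, and since $L$ is invertible, $L(\pc_1) = \pc_2$.

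Finally, since $L$ is by construction the product of a permutation matrix and an invertible diagonal matrix, it is a linear Hamming isometry of $\Ff_q^\N$, establishing $\pc_1 \equiv_{\operatorname{H}} \pc_2$. There is no substantive obstacle here beyond bookkeeping of indices; the only subtle point is choosing the permutation $\tau$ in the right direction so that the identity $\varphi_2 \circ L = \varphi_1$ holds. Verifying it on basis vectors before invoking linearity is enough to confirm the construction.
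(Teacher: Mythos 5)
Your proof is correct and follows essentially the same route as the paper: both arguments express one parent function as the composition of the other with an explicit monomial map (permutation times invertible diagonal), which is a Hamming isometry carrying one kernel onto the other. The only cosmetic difference is that the paper normalizes one parent function to $\varphi(e_i) = f_i$ first, whereas you keep both general and track the indices directly.
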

\begin{proof}
    Let $\varphi, \psi$ be two parent functions of $\cF= \{\langle f_1\rangle,\ldots,\langle f_{\N}\rangle\}$, and assume without loss of generality that $\varphi(e_i) = f_i$ for all $i \in [\N]$. Then there exists a permutation $\sigma \in S_{\N}$ (the symmetric group), and scalars $\lambda_i \in \Ff_q\setminus \{0\}$ for \( i \in [\N]\) such that \( \psi(e_i) = \lambda_{i} f_{\sigma(i)} =  \varphi(\lambda_{i} e_{\sigma(i)})\). Thus $\psi = \varphi \circ \Lambda \circ P$, where $P$ and $\Lambda$ are the linear applications associated respectively to the permutation of coordinates induced by $\sigma$ and the scaling of the coordinates by the $\lambda_i$'s. Thus, $\ker (\varphi) = \Lambda P  \ker (\psi)$ and so  \(\ker(\varphi) \equiv_{\operatorname{H}} \ker(\psi) \).
\end{proof}

Viceversa, another natural question to ask is: in what relationship are two projective metrics such that the corresponding parents codes coincide? It turns out that this happens precisely when two projective metrics are linearly isometric or isomorphic (see also Proposition \ref{prop projective metrics equivalence}).
We now show that there is a natural bijection between the following sets of equivalence classes:
\begin{itemize}
\item  The set $\overline{ Pr}_{\N}(V)$ of isomorphism classes of projective metrics $\overline{\wt}$ on $V$ induced by projective point families of cardinality $\N$;
    \item The set $\overline{ \Gr}_{\N-N}(\Ff_q^{\N})_{d_{\operatorname{H}}\geq 3}$ of Hamming-equivalence classes of $(\N - N)$-dimensional subspaces \(\overline\pc\)  of $\Ff_q^{\N}$ with  minimum Hamming distance \(d_{\operatorname{H}} \geq 3\).
\end{itemize}
%

This was already noted \cite{gabidulin1998metrics}; here we formalize the result.

\begin{restatable}[]{proposition}{parentcodeEqProject}
    \label{prop: bijection parent codes and projective weights}
    There exists a natural bijection between isomorphism classes of projective metrics and Hamming-equivalence classes of subspaces:
    \begin{align*}
    \Psi: \overline{Pr}_{\N}(V) \ &\to \ \overline{\Gr}_{\N-N}(\Ff_q^{\N})_{d_{\operatorname{H}}\geq 3} \\
    \overline{\wt}_{\cF} \ &\mapsto \ \overline{\pc}_{\cF},
\end{align*}
    where $\overline{\pc}_{\cF}$ is the Hamming equivalence class of parent codes of $\cF$. The inverse of this bijection is given by
    \begin{align*}
    \Psi^{-1}: \overline{\Gr}_{\N-N}(\Ff_q^{\N})_{d_{\operatorname{H}}\geq 3} \ &\to \ \overline{Pr}_{\N}(V) \qquad \text{} \\
    \overline{\pc} \qquad \ &\mapsto \ \overline{\wt}_{\operatorname{quot},\varphi_{\scalebox{0.65}{\pc}}},
\end{align*}
with $\varphi_{\scalebox{0.8}{\pc}} : \Ff_q^{\N} \to V$ a linear map with kernel $\pc$.
\end{restatable}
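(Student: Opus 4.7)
The plan is to show separately that $\Psi$ and $\Psi^{-1}$ are well-defined on the stated equivalence classes, and then verify that the two are mutually inverse. Throughout, the bridge between the two sides is Proposition~\ref{prop: proj iff quotient 1}, which realizes any projective weight $\wt_\cF$ as the quotient weight $\wt_{\operatorname{quot},\varphi}$ of the Hamming metric along a parent function $\varphi$, together with Proposition~\ref{prop projective metrics equivalence}, which tells us that linearly isometric projective metrics are precisely those coming from projective point families related by an element of $\GL(V)$.

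For $\Psi$: fix a family $\cF$ of cardinality $\N$. By Observation~\ref{obs: the parent codes of a spanning family F, are all Hamming equivalent}, any two parent codes of $\cF$ are Hamming equivalent, so the class $\overline{\pc}_\cF$ is determined by $\cF$ alone; by Proposition~\ref{prop: proj iff quotient 1}, each such code has codimension $N$ (since $\varphi$ is surjective onto $V$) and Hamming minimum distance $\geq 3$. If $\wt_\cF \cong \wt_\G$, then Proposition~\ref{prop projective metrics equivalence} yields $L \in \GL(V)$ with $\G = L(\cF)$; composing any parent function of $\cF$ with $L$ gives a parent function of $\G$ having the same kernel, so $\overline{\pc}_\cF = \overline{\pc}_\G$. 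Hence $\Psi$ is well-defined and lands in the asserted target.

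For $\Psi^{-1}$: given $\pc \leq \Ff_q^{\N}$ of codimension $N$ with $d_{\operatorname{H}}(\pc) \geq 3$, pick any surjective linear $\varphi_\pc : \Ff_q^{\N} \to V$ with kernel $\pc$. The Hamming condition forces the images $\varphi_\pc(e_i)$ to be nonzero (else $e_i \in \pc$, weight~$1$) and pairwise linearly independent (else $e_i - \lambda e_j \in \pc$, weight~$2$), so $\cF_\pc := \{\spn{\varphi_\pc(e_i)} : i \in [\N]\}$ is a valid projective point family and $\varphi_\pc$ is one of its parent functions. Proposition~\ref{prop: proj iff quotient 1} then gives $\wt_{\operatorname{quot},\varphi_\pc} = \wt_{\cF_\pc}$, which is in particular projective. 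Independence of the isomorphism class from all choices is the main technical point: any two surjections with kernel $\pc$ differ by a unique automorphism of $V$, which the Isometry Property in Theorem~\ref{thm: universal property} promotes to a linear isometry between the two quotient weights; likewise, if $\pc' = L(\pc)$ for a Hamming isometry $L = \Lambda P$, then $\varphi_\pc \circ L^{-1}$ has kernel $\pc'$, and since $L$ preserves the Hamming weight, Corollary~\ref{cor: composition of quotients} identifies its quotient weight on $V$ with $\wt_{\operatorname{quot},\varphi_\pc}$.

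The two compositions are then immediate: starting from $\overline{\wt}_\cF$, any parent function $\varphi$ gives $\Psi(\overline{\wt}_\cF) = \overline{\ker(\varphi)}$, and applying $\Psi^{-1}$ returns $\overline{\wt}_{\operatorname{quot},\varphi} = \overline{\wt}_\cF$ by Proposition~\ref{prop: proj iff quotient 1}; starting from $\overline{\pc}$, the family $\cF_\pc$ constructed above has $\varphi_\pc$ as a parent function and hence $\pc$ as a parent code, so $\Psi(\Psi^{-1}(\overline{\pc})) = \overline{\pc}$. The main obstacle, as noted, is ruling out dependence of $\Psi^{-1}$ on the two levels of choice (representative of the Hamming class and choice of $\varphi_\pc$); everything else reduces to straightforward bookkeeping with parent functions and the universal property of quotient weights.
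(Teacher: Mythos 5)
Your proposal is correct and relies on the same ingredients as the paper's proof in Appendix D: Proposition \ref{prop projective metrics equivalence} for the well-definedness of $\Psi$, the observation that $d_{\operatorname{H}}(\pc)\geq 3$ forces the images of the standard basis vectors under a surjection with kernel $\pc$ to be nonzero and pairwise linearly independent, and the isometry property of quotient weights. The only difference is organizational: you verify well-definedness of both $\Psi$ and $\Psi^{-1}$ and then check they are mutually inverse, whereas the paper shows $\Psi$ is well-defined, injective, and surjective --- its injectivity argument is exactly your independence-of-choices argument for $\Psi^{-1}$, phrased at the level of projective point families rather than of quotient weights.
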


\begin{proof}
See Appendix \ref{appendix: Equivalence of Projective Metrics}.
\end{proof}


In conclusion, isomorphic projective metrics are obtained from quotient maps with equivalent kernels with minimum Hamming distance at least 3. These kernels, the parent codes, constitute the linear dependencies in the corresponding projective point family. Namely, if $\cF= \{\langle f_1\rangle,\ldots,\langle f_{\N}\rangle\}$ is a family and $\varphi: \Ff_q^\N \to V$ a parent function with $\varphi(e_i) = f_i$ and parent code $\pc$, then a codeword $(c_1,\ldots,c_\N) = \sum_{i=1}^\N c_i e_i \in \pc$ implies $\sum_{i=1}^\N c_i f_i = 0$.

\begin{example}[\textbf{Phase rotation metric}]
Let $N \geq 2$ and consider the projective point family $\F = \{e_1,\ldots,e_N,-\sum_{i=1}^N e_i\} \subset \Ff_q^N$. Then the linear map $ \varphi : \Ff_q^{N+1}  \to \Ff_q^N : 
    e_i  \mapsto e_i \text{ for } i \in [N] \text{ and }
    e_{N+1}  \mapsto -\sum_{i=1}^N e_i
    $
represented by the matrix
\[
\begin{pmatrix}
    1 & 0 & \cdots & 0 \\
    0 & 1 & \cdots & 0 \\
    \vdots & \vdots & \ddots & \vdots \\
    0 & 0 & \cdots & 1 \\
    -1 & -1 & \cdots & -1 \\
\end{pmatrix}
\]
is a parent function for $\F$  with parent code equal to the repetition code $\pc = \spn{(1 \, 1 \, \ldots \, 1)} \subset \Ff_q^{N+1}$.
\end{example}

    
Finally, we consider quotients of linear maps whose kernels have minimum Hamming distance less than 3. Interestingly, this does not yield any other types of metrics.
We show that for any map \(\xi\) coming from the Hamming metric inducing a quotient weight, there exists a parent function \(\varphi\) inducing the same quotient weight, and with $d_{\operatorname{H}}(\ker(\varphi))\geq 3$; that is, any quotient weight of the Hamming metric is a projective metric.

In the following, given two finite vector spaces $\Ff_q^{\N_1}$ and $\Ff_q^{\N_2}$ with standard bases $\{e_1,\ldots,e_{\N_1}\}$ and $\{e'_1,\ldots,e'_{\N_2}\}$ respectively, a \emph{weakly row monomial map} from $\Ff_q^{\N_1}$ to $\Ff_q^{\N_2}$ is a linear map $\psi$ such that for every $i  \in [\N_1]$ 
\[
\psi(e_i) = \lambda_i e'_{j_i}
\]
for some (possibly zero) $\lambda_i \in \Ff_q$ and $j_i \in [\N_2]$. Such a map can be represented by a \emph{weakly row monomial matrix}, i.e. a matrix $M \in \Ff_q^{\N_1 \times \N_2}$ whose rows each contain at most one non-zero element.

Our interest in these maps comes from the following factorization property of matrices:

\begin{proposition}\label{prop: matrix factorization row monomial}
Let $M \in \Ff_q^{\N_1 \times \Vdim}$ be any matrix. Then  
\[
M = R \cdot P,
\]
with $R \in \Ff_q^{\N_1 \times \N_2}$ a weakly row monomial matrix and $P \in \Ff_q^{\N_2 \times \Vdim}$ a matrix whose rows are all pairwise linear independent (and thus non-zero), for some $\N_2 \leq \N_1$.
   Equivalently, any map \(\xi:\bF_q^{\N_1} \to \bF_q^{\Vdim}\) is equal to the decomposition \(\xi=\varphi \circ \psi\) for some \(\psi\) weakly row monomial map from \(\bF_q^{\N_1} \to \bF_q^{\N_2}\) and a parent function \(\varphi\) from \(\bF_q^{\N_2}\to \bF_q^{\Vdim}\).
\end{proposition}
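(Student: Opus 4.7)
The plan is to construct the factorization row by row, grouping the rows of $M$ by projective equivalence. Write $m_1, \ldots, m_{\N_1}$ for the rows of $M$, and define an equivalence relation on the nonzero rows by $m_i \sim m_k \iff \spn{m_i} = \spn{m_k}$ in $\Ff_q^{\Vdim}$. Let $\N_2$ be the number of equivalence classes (so $\N_2 \leq \N_1$, with equality only when all rows are nonzero and projectively distinct), and pick one representative $p_1, \ldots, p_{\N_2}$, one from each class. Form $P \in \Ff_q^{\N_2 \times \Vdim}$ with these rows. By construction the rows of $P$ are nonzero and pairwise linearly independent.

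Next I would build $R \in \Ff_q^{\N_1 \times \N_2}$ row by row. For each $i \in [\N_1]$: if $m_i = 0$, set the $i$-th row of $R$ to $0$; otherwise there is a unique $j_i \in [\N_2]$ with $m_i \in \spn{p_{j_i}}$, hence a unique $\lambda_i \in \Ff_q \withoutzero$ with $m_i = \lambda_i p_{j_i}$, and I set the $i$-th row of $R$ to $\lambda_i e'_{j_i}$, where $e'_{j_i}$ is the $j_i$-th standard basis (row) vector of $\Ff_q^{\N_2}$. This $R$ is weakly row monomial by definition. A direct computation shows that the $i$-th row of $RP$ is $\lambda_i e'_{j_i} P = \lambda_i p_{j_i} = m_i$ (and $0$ in the degenerate case), so $M = RP$.

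For the equivalent reformulation, using the row-vector convention $v \mapsto vM$, the map $\xi$ decomposes as $\xi = \varphi \circ \psi$ where $\psi : \Ff_q^{\N_1} \to \Ff_q^{\N_2}$ is represented by $R$ and $\varphi : \Ff_q^{\N_2} \to \Ff_q^{\Vdim}$ is represented by $P$. The shape of $R$ makes $\psi$ weakly row monomial in the sense defined before the proposition, and the pairwise linear independence of the rows of $P$ is exactly the condition that $\{\spn{\varphi(e'_j)} \mid j \in [\N_2]\}$ is a projective point family, making $\varphi$ a parent function of that family (per Definition \ref{def: parent function and parent code}).

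There is no real obstacle here; the argument is essentially a bookkeeping exercise organizing the rows of $M$ into projective equivalence classes. The one thing to handle carefully is the degenerate case $M = 0$, where $\N_2 = 0$ and both $R$ and $P$ are empty matrices; this is consistent with the empty projective point family and corresponds to the trivial parent function on $\Ff_q^0$, so the statement still holds.
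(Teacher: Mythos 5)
Your proof is correct, and it is exactly the natural argument: the paper in fact states Proposition \ref{prop: matrix factorization row monomial} without proof, treating the row-grouping factorization as immediate, and your construction (partitioning the nonzero rows of \(M\) into projective equivalence classes, taking representatives as the rows of \(P\), and recording the class index and scalar in \(R\)) is precisely what is intended. One small bonus worth noting: your construction automatically makes \(\psi\) surjective onto \(\Ff_q^{\N_2}\) (every row of \(P\) is a nonzero multiple of some row of \(M\)), which is the form in which the proposition is actually invoked in Corollary \ref{cor: reduction to parent functions}.
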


\begin{lemma}\label{lem: quot weight is Hamming}
Consider the finite vector spaces $\Ff_q^{\N_1}$ and $\Ff_q^{\N_2}$ with standard bases $\{e_1,\ldots,e_{\N_1}\}$ and $\{e'_1,\ldots,e'_{\N_2}\}$ respectively and $\N_2 \leq \N_1$, both spaces endowed with the Hamming weight $\wtH$.
If $\psi : \Ff_q^{\N_1} \twoheadrightarrow \Ff_q^{\N_2}$ is a surjective weakly row monomial map, then
\[
(\Ff_q^{\N_2},\wt_{\operatorname{quot},\psi}) = (\Ff_q^{\N_2}, \wtH).
\]

\end{lemma}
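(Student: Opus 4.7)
The plan is to prove the equality of weights by establishing both inequalities. For a weakly row monomial map, each standard basis vector $e_i$ is sent either to $0$ or to a scalar multiple $\lambda_i e'_{j_i}$ of a single standard basis vector of the target, so $\wtH(\psi(e_i)) \leq 1 = \wtH(e_i)$. Extending by linearity and the triangle inequality for the Hamming weight immediately yields that $\psi : (\Ff_q^{\N_1}, \wtH) \to (\Ff_q^{\N_2}, \wtH)$ is a contraction.

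Now I would invoke the maximality part of Theorem \ref{thm: universal property}: since $\wt_{\operatorname{quot},\psi}$ is, by definition, the largest weight on $\Ff_q^{\N_2}$ making $\psi$ a contraction, the above observation forces $\operatorname{id} : (\Ff_q^{\N_2}, \wt_{\operatorname{quot},\psi}) \to (\Ff_q^{\N_2}, \wtH)$ to be a contraction. Equivalently,
\[
\wtH(y) \leq \wt_{\operatorname{quot},\psi}(y) \qquad \text{for all } y \in \Ff_q^{\N_2}.
\]

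For the reverse inequality I would use the surjectivity of $\psi$ to construct a preimage of controlled Hamming weight. Since $\psi$ is surjective and each $\psi(e_i)$ lies in some coordinate axis $\spn{e'_{j_i}}$, for every $j \in [\N_2]$ there must exist an index $i(j) \in [\N_1]$ with $\psi(e_{i(j)}) = \lambda_{i(j)} e'_j$ for some $\lambda_{i(j)} \in \Ff_q \setminus \{0\}$. Given any $y = \sum_{j \in J} y_j e'_j$ with $J = \supp(y)$, set
\[
w := \sum_{j \in J} \frac{y_j}{\lambda_{i(j)}} e_{i(j)}.
\]
By construction $\psi(w) = y$ and $\wtH(w) \leq |J| = \wtH(y)$, which gives $\wt_{\operatorname{quot},\psi}(y) \leq \wtH(y)$. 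Combining both inequalities proves the claim.

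The only subtle point is the definitional check of what a weakly row monomial map does to non-basis vectors, but the triangle inequality handles this automatically. There is no serious obstacle: the statement essentially expresses that such a $\psi$ is, up to scaling and merging of coordinates, a coordinate projection, so Hamming weights transfer cleanly. The upper bound is the only step that uses surjectivity in an essential way, whereas the lower bound follows formally from the universal property together with the easy contraction check.
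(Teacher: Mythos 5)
Your proof is correct, and it takes a genuinely cleaner route than the paper's for the key inequality. The paper establishes $\wt_{\operatorname{quot},\psi}(w) \geq \wtH(w)$ by picking a minimal-weight preimage $v$, decomposing it as $\tilde{w} + r$ with $r \in \ker(\psi)$, and proving a support-counting claim ($\wtH$ of the first $\N_2$ coordinates of $r$ is dominated by $\wtH$ of the remaining ones, because each cancellation in the first block forces a nonzero entry in the second); this requires the preliminary normalization $\psi(e_i) = e'_i$ for $i \in [\N_2]$. You instead observe that a weakly row monomial map is itself a contraction $(\Ff_q^{\N_1},\wtH) \to (\Ff_q^{\N_2},\wtH)$ — since each $e_i$ lands on a vector of Hamming weight at most $1$ and the triangle inequality does the rest — and then invoke the maximality clause of Theorem \ref{thm: universal property} to conclude $\wt_{\operatorname{quot},\psi} \geq \wtH$. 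This sidesteps the kernel decomposition entirely and is arguably the more natural argument given that the quotient-weight machinery has already been set up in Section \ref{subsec: weighted vector spaces and contractions}. Your reverse inequality (explicit preimage $w$ with $\wtH(w) = \wtH(y)$, using surjectivity to select for each $j$ an index $i(j)$ with $\psi(e_{i(j)}) = \lambda_{i(j)} e'_j$) is essentially the paper's $\tilde{w}$; the choice is well-defined since distinct $j$ force distinct $i(j)$. Both proofs are valid; yours trades the paper's hands-on combinatorics for a one-line contraction check plus an appeal to the universal property, which is a reasonable bargain.
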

\begin{proof}
By surjectivity, for each basis vector $e'_j$ there is some $e_i$ and $\lambda \neq 0$ such that $\psi(e_i) = \lambda e'_j$.
 As permutation and scaling of the basis vectors preserves the Hamming metric, let us assume without loss of generality that for all $i \in [\N_2]$ $\psi(e_i) = e'_i$ and $\N_1 > \N_2$. Now, let $w = \sum_{i=1}^{\N_2} w_i e'_i \in \Ff_q^{\N_2}$ be any vector and  $v \in \psi^{-1}(w)$ with $\wtH(v)$ minimal. We can write $v = \Tilde{w} + r$ with $\Tilde{w} = \sum_{i=1}^{\N_2} w_i e_i  \in \psi^{-1}(w)$ for some $r = \sum_{i=1}^{\N_2} r_i e_i \in \ker(\psi)$. We claim that $\wtH(\sum_{i=1}^{\N_2} r_i e_i) \leq \wtH(\sum_{j=\N_2+1}^{\N_1} r_j e_j)$, which implies that 
 \begin{align*}
 \wtH(v) &= \wtH\left(\Tilde{w} + \sum_{i=1}^{\N_2} r_i e_i\right)+\wtH\left(\sum_{j=\N_2+1}^{\N_1} r_j e_j\right)\\ 
 &\geq \wtH\left(\Tilde{w}\right) - \wtH\left(\sum_{i=1}^{\N_2} r_i e_i\right) + \wtH\left(\sum_{j=\N_2+1}^{N_1} r_j e_j\right) \\
 &\geq \wtH\left(\Tilde{w}\right) = \wtH(w)
 \end{align*}
 with equalities for $r = 0$, and hence $\wt_{\operatorname{quot},\psi}(w) = \wtH(\Tilde{w}) = \wtH(w)$ as desired.

 To prove our claim above, note that $\psi(r) = \sum_{i=1}^{\N_2} r_i e'_i + \sum_{j=\N_2+1}^{\N_1} r_j \psi(e_j) = 0$. By linear independence of the $e'_i$ 's, for each $i \in [\N_2]$ with $r_i \neq 0$ there must be at least one $j \in \{\N_2+1,\ldots,\N_1\}$ such that $r_j \neq 0$ and $\psi(e_j) = e'_i$. So $\wtH(\sum_{i=1}^{\N_2} r_i e_i) = \left|\{i \in [\N_2] \mid r_i \neq 0 \}\right| \leq \left|\{j \in \{\N_2+1,\ldots,\N_1\} \mid r_j \neq 0 \}\right| = \wtH(\sum_{j=\N_2+1}^{\N_1} r_j e_j)$ which concludes the proof.
\end{proof}

\begin{corollary}\label{cor: reduction to parent functions}

Let $V$ be a finite vector space over $\Ff_q$ and  $\xi: \Ff_q^{\N_1} \twoheadrightarrow V$  any surjective linear map. Then there exists an integer \(\N_2 \leq \N_1\) and a surjective linear map $\varphi: \bF_q^{\N_2} \twoheadrightarrow V$ such that, when $\Ff_q^{\N_1}$ and $\bF_q^{\N_2}$ are endowed with the Hamming weight $\wtH$:
\begin{itemize}
    \item $ \wt_{\operatorname{quot},\xi} =  \wt_{\operatorname{quot},\varphi}$;
    \item $\varphi$ is a parent function, i.e. it can be represented by a matrix with pairwise linear independent rows and $d_{\operatorname{H}}(\ker(\varphi))\geq 3$. 
\end{itemize}

\end{corollary}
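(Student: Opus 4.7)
The plan is to assemble the corollary from the three results immediately preceding it: Proposition \ref{prop: matrix factorization row monomial}, Lemma \ref{lem: quot weight is Hamming}, and Corollary \ref{cor: composition of quotients}. Concretely, I would start by applying Proposition \ref{prop: matrix factorization row monomial} to the surjective map \(\xi : \Ff_q^{\N_1} \to V\) to factor it as \(\xi = \varphi \circ \psi\), where \(\psi : \Ff_q^{\N_1} \to \Ff_q^{\N_2}\) is a weakly row monomial map and \(\varphi : \Ff_q^{\N_2} \to V\) is a parent function (its representing matrix has pairwise linearly independent rows). Since \(\xi = \varphi \circ \psi\) is surjective, \(\varphi\) must be surjective as well; and after possibly discarding coordinates of \(\Ff_q^{\N_2}\) that do not appear in the image of \(\psi\) (which corresponds to removing the zero columns of the monomial matrix and the corresponding rows of the parent function), I may assume \(\psi\) is also surjective without changing the composition.

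Next, I would combine Lemma \ref{lem: quot weight is Hamming} with Corollary \ref{cor: composition of quotients}. Endowing \(\Ff_q^{\N_1}\) with \(\wtH\), the lemma gives that the quotient weight \(\wt_{\operatorname{quot},\psi}\) induced on \(\Ff_q^{\N_2}\) is exactly the Hamming weight \(\wtH\). Then Corollary \ref{cor: composition of quotients}, applied to the chain
\[
(\Ff_q^{\N_1},\wtH) \xrightarrow{\psi} (\Ff_q^{\N_2},\wt_{\operatorname{quot},\psi}) = (\Ff_q^{\N_2},\wtH) \xrightarrow{\varphi} V,
\]
yields the identity \(\wt_{\operatorname{quot},\xi} = \wt_{\operatorname{quot},\varphi \circ \psi} = \wt_{\operatorname{quot},\varphi}\) on \(V\), where the quotient on the right is taken with respect to the Hamming weight on \(\Ff_q^{\N_2}\). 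This establishes the first bullet.

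For the second bullet, the pairwise linear independence of the rows of \(\varphi\) comes for free from the factorization output of Proposition \ref{prop: matrix factorization row monomial}. The minimum distance condition \(d_{\operatorname{H}}(\ker \varphi) \geq 3\) then follows immediately from Proposition \ref{prop: proj iff quotient 1}, which states precisely that a parent function has parent code of minimum Hamming distance at least \(3\), equivalently because no nonzero codeword of \(\ker \varphi\) can have weight \(1\) or \(2\) when the rows of \(\varphi\) are pairwise linearly independent.

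The main subtlety — really the only one — is the surjectivity bookkeeping: Proposition \ref{prop: matrix factorization row monomial} as stated does not guarantee that the intermediate map \(\psi\) is surjective, and Lemma \ref{lem: quot weight is Hamming} does require surjectivity. I expect this to be the only point worth elaborating in the written proof, since it is what forces the inequality \(\N_2 \leq \N_1\) and simultaneously ensures that after the reduction, no redundant coordinates are carried along. Once that is handled, everything else is a direct invocation of the prior results with no further computation.
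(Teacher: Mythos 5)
Your proposal is correct and follows essentially the same route as the paper's proof: factor $\xi = \varphi \circ \psi$ via Proposition \ref{prop: matrix factorization row monomial}, apply Lemma \ref{lem: quot weight is Hamming} to identify $\wt_{\operatorname{quot},\psi}$ with $\wtH$, and conclude with Corollary \ref{cor: composition of quotients}. The only difference is that you explicitly handle the surjectivity of $\psi$ by discarding unused coordinates, a point the paper silently absorbs into its statement of the factorization (writing $\psi$ with a two-headed arrow); your attention to it is warranted and does not change the argument.
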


\begin{proof}
Let $M$ be the matrix representing $\xi$. By applying Proposition \ref{prop: matrix factorization row monomial} to $M$, we can decompose $\xi$ as $\varphi \circ \psi$, with $\psi: \Ff_q^{\N_1} \twoheadrightarrow \bF_q^{\N_2}$ a surjective weakly row monomial map and  $\varphi: \bF_q^{\N_2} \twoheadrightarrow V$  a surjective linear map represented by a matrix with pairwise linear independent rows, that is, \(\varphi\) is a parent function with $d_{\operatorname{H}}(\ker(\varphi))\geq 3$, and 
\[
\begin{tikzcd}
{\Ff_q^{\N_1}} \arrow[r, "\psi", two heads] \arrow[rd, "\xi"',  two heads] & {\bF_q^{\N_2}} \arrow[d, "\varphi",  two heads] \\
    & {\text{}\ V \ .}      
\end{tikzcd}
\]
Next, by Lemma \ref{lem: quot weight is Hamming} we know that $(\bF_q^{\N_2},\wt_{\operatorname{quot},\psi}) = (\bF_q^{\N_2}, \wtH)$, and with Corollary \ref{cor: composition of quotients} we conclude that $(V, \wt_{\operatorname{quot},\xi}) = (V, \wt_{\operatorname{quot},\varphi})$. 
\end{proof}

\subsection{Projective isometries}\label{subsec: Isometries}

In this subsection we consider linear isometries (see subsection \ref{subsec: weighted vector spaces and contractions}). We refer to a linear isometry between two projective metrics, as a \myemph{projective isometry}. We give a strong relation between projective isometries and corresponding sets of Hamming Isometries. In particular we show that the set of projective isometries from \(V\) to itself is group isomorphic to the stabilizer subgroup of the parent code respect to the Hamming isometries.  Let \(\varphi_V: \mathbb{F}_q^{\N} \to V\) and \(\varphi_W: \mathbb{F}_q^{\N} \to W\) be two parent maps defining projective weight functions on vector spaces \(V, W\), and let \(\pc_V := \ker(\varphi_V)\) and \(\pc_W := \ker(\varphi_W)\) denote their respective parent codes and \(\cF_V\), \(\cF_W\) their spanning families  with \(\cF_V= \{f_i\}_{i=1}^{\N}\) and \(\cF_W=\{f'_i\}_{i=1}^{\N}\). Consider the subsets \(\cC,\cC'\subset \bF_q^{\N}\) and \(\cD_V \subset V\), \(\cD_W \subset W\).

We define the following sets of isometries:
\begin{enumerate}

    \item \(\isom_{\mathcal{F}}(V, W) := \left\{ \isomfun \in \GL(V, W) \mid \isomfun \text{ is an isometry between } (V, \wt_{\cF_V}) \text{ and } (W, \wt_{\cF_W})\right\}\) is the set of projective isometries between projective metric spaces. If \(\cF_V\) and \(\cF_W\) are both basis of \(V\) and \(W\) respectively that we denote the set of Hamming isometries \(\isom_{\mathcal{F}}(V, W)\) by \(\isom_{\operatorname{H}}(V, W)\).

    \item  We denote the set of projective isometries sending \(\cD_V\) onto \(\cD_W\) as 
    \[\Fixisom{\cD_V}{\cD_W}{V}{W} := \left\{ \isomfun \in \isom_{\cF}(V, W) \mid \isomfun(\mathcal{D}_V) = \mathcal{D}_W \right\}.\] 
    If \(\cF_V,\cF_W\) are both bases we denote this set by \(\Hixisom{V}{W}{\mathcal{D}_V}{\mathcal{D}_W}\)
    
    \item  The group of projective isometries from \(V\) to itself are referred to projective automorphisms and denoted by in \(\aut_{\cF}(V)\coloneqq \isom_{\cF}(V,V)\) and \(\Fisom{\cC}{\cC'}{V} := \left\{ F \in \aut_{\cF}(V) \mid F(\cC) = \cC' \right\}\) is the set of projective automorphisms mapping \(\cC\) to \(\cC'\). If \(\cC = \cC'\) this is the \emph{stabilizer} subgroup of \(\cC\) and is denoted by \(\stab_{\cF}(\cC)\). (Again, if the \(\cF\) induces the Hamming metric, these sets are denoted by \(\aut_{H}(V), \Hisom{\cC}{\cC'}{V}\) and \(\stab_{\operatorname{H}}(\cC)\) respectively),

\end{enumerate}

The following gives a natural way to \onequote lift' a projective isometry to an Hamming isometry:
\begin{observation}\label{obs: lifting of projective isometry}
    The linear map \(\isomfun:V\to W\) is an isometry between \((V,\wt_{\cF_V})\to(W,\wt_{\cF_W}) \)
    if and only if there exists $\sigma \in S_{\N}$ such that $ \isomfun(f_i)= \lambda_i f'_{\sigma(i)}$ for all $i = 1,\ldots, \N$. Furthermore, the linear map defined by \(\lift{L}: \bF_q^{\N}\to \bF_q^{\N}\) such that \(\lift{L}(e_i)=\lambda_i e_{\sigma_i}\) is the unique Hamming isometry which makes the following diagram commute.
\[\begin{tikzcd}[ampersand replacement=\&]
	V \& W \\
	{\Ff_q^{\N}} \& {\Ff_q^{\N}}
	\arrow["\lift{\isomfun}", from=2-1, to=2-2]
	\arrow["\varphi_V", from=2-2, to=1-2]
	\arrow["\varphi_W"', from=2-1, to=1-1]
	\arrow["{\isomfun}"', from=1-1, to=1-2]
\end{tikzcd}\]
\end{observation}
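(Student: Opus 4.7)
The plan is to prove the forward direction of the ``if and only if'' first, then the backward direction, and finally to verify the properties of the lift $\hat{L}$. Throughout, the main tool will be the pairwise linear independence of the projective point families $\cF_V$ and $\cF_W$: elements of $\cF_V$ are exactly the nonzero vectors $v \in V$ with $\wt_{\cF_V}(v) = 1$ that are representatives of distinct projective points, and similarly for $\cF_W$.

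First I would suppose $L : V \to W$ is an isometry. For each $i \in [\N]$, the vector $f_i$ satisfies $\wt_{\cF_V}(f_i) = 1$, so $\wt_{\cF_W}(L(f_i)) = 1$, which forces $L(f_i) = \lambda_i f'_{\sigma(i)}$ for some $\lambda_i \in \Ff_q \setminus \{0\}$ and some $\sigma(i) \in [\N]$. This defines a function $\sigma : [\N] \to [\N]$. To see that $\sigma$ is a permutation, note that $L$ is invertible and $\spn{f_i} \neq \spn{f_k}$ for $i \neq k$ imply $\spn{L(f_i)} \neq \spn{L(f_k)}$, hence $\sigma(i) \neq \sigma(k)$. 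Injectivity of $\sigma$ on the finite set $[\N]$ yields $\sigma \in S_{\N}$. Conversely, if $L$ has this form, any representation $v = \sum_{i \in I} a_i f_i$ of a vector $v \in V$ with minimal $|I|$ gives $L(v) = \sum_{i \in I} a_i \lambda_i f'_{\sigma(i)}$, so $\wt_{\cF_W}(L(v)) \leq \wt_{\cF_V}(v)$; applying the same argument to $L^{-1}$ (which has an analogous description with permutation $\sigma^{-1}$ and scalars $\lambda_i^{-1}$) yields the reverse inequality.

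Next I would verify that $\hat{L}(e_i) = \lambda_i e_{\sigma(i)}$ defines a Hamming isometry and makes the diagram commute. Being a linear map that sends standard basis vectors to nonzero scalar multiples of standard basis vectors, $\hat{L}$ is the product of a diagonal and a permutation matrix, hence belongs to $\isom_{\operatorname{H}}(\Ff_q^{\N}, \Ff_q^{\N})$. Commutativity is checked on the basis: $\varphi_W(\hat{L}(e_i)) = \varphi_W(\lambda_i e_{\sigma(i)}) = \lambda_i f'_{\sigma(i)} = L(f_i) = L(\varphi_V(e_i))$, and then extended by linearity.

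Finally I would establish uniqueness. Suppose $\tilde{L} : \Ff_q^{\N} \to \Ff_q^{\N}$ is another Hamming isometry with $\varphi_W \circ \tilde{L} = L \circ \varphi_V$. Since $\tilde{L}$ is a Hamming isometry, it must take the form $\tilde{L}(e_i) = \mu_i e_{\tau(i)}$ for some $\tau \in S_{\N}$ and nonzero $\mu_i \in \Ff_q$. Comparing both sides on $e_i$ gives $\mu_i f'_{\tau(i)} = L(f_i) = \lambda_i f'_{\sigma(i)}$. Here the pairwise linear independence of $\cF_W$ is the key obstacle/ingredient: if $\tau(i) \neq \sigma(i)$, the vectors $f'_{\tau(i)}$ and $f'_{\sigma(i)}$ would be linearly independent, contradicting the equation above since $\mu_i, \lambda_i \neq 0$. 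Hence $\tau(i) = \sigma(i)$ and consequently $\mu_i = \lambda_i$ for all $i$, giving $\tilde{L} = \hat{L}$. The only delicate point in the whole argument is consistently exploiting pairwise linear independence to convert equalities of scalar multiples of $f_i$'s into equalities of both the scalars and the indices; everything else is a routine verification.
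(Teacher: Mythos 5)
The paper records this statement as an observation without proof, so there is nothing to compare against line by line; your argument is the natural one, and the forward direction, the verification that $\lift{L}$ is a monomial (hence Hamming-isometric) map making the square commute, and the uniqueness argument via pairwise linear independence of $\cF_W$ are all correct and cleanly organized around the right ingredient.

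The one genuine gap is in the reverse implication. You prove $\wt_{\cF_W}(L(v)) \leq \wt_{\cF_V}(v)$ and then ``apply the same argument to $L^{-1}$'', but nothing in the stated hypothesis guarantees that $L$ is invertible, and without invertibility the implication is false. Concretely, take $V = \Ff_2^3$ with $\cF_V = \{e_1,e_2,e_3\}$ and $W = \Ff_2^2$ with $\cF_W = \{e_1,\, e_2,\, e_1+e_2\}$ (so $\N = 3$ in both), and let $L$ send $e_i \mapsto f'_i$: then $L(f_i) = f'_{\sigma(i)}$ with $\sigma = \operatorname{id}$ and all $\lambda_i = 1$, yet $L(e_1+e_2+e_3) = 0$ while $\wt_{\cF_V}(e_1+e_2+e_3) = 3$, so $L$ is neither injective nor an isometry. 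Surjectivity of $L$ does follow from your hypothesis (its image contains a spanning set of $W$), but injectivity does not. The statement is evidently intended for $L \in \GL(V,W)$ --- consistent with the paper's definition of $\isom_{\cF}(V,W)$ as a subset of $\GL(V,W)$ --- so you should either record invertibility explicitly as part of the hypothesis or note that it cannot be derived; as written, the parenthetical claim that $L^{-1}$ ``has an analogous description'' presupposes exactly what needs to be justified.
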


The fact that the projective isometry group is group isomorphic to the Hamming stabilizer of the parent code comes from the following theorem.

\begin{theorem}\label{thm: isometry isomorphism}
Let \(\Phi: \Hisom{\pc_V}{\pc_W}{\bF_q^{\N}} \to \isom_{\cF}(V,W)\), such that  \(\Phi(\lift{\isomfun}) := \isomfun\), where \(\isomfun\) is the only 
    projective isometry such that the following diagram commutes:
\begin{equation}\label{eq: diagram comm}
\begin{tikzcd}[ampersand replacement=\&]
	V \& W \\
	{\Ff_q^{\N}} \& {\Ff_q^{\N}}
	\arrow["\lift{\isomfun}", from=2-1, to=2-2]
	\arrow["\varphi_V", from=2-2, to=1-2]
	\arrow["\varphi_W"', from=2-1, to=1-1]
	\arrow["{\isomfun}"', from=1-1, to=1-2]
\end{tikzcd}
\end{equation}
Then \(\Phi\) is a bijection and behaves well under composition, that is if \(\lift{\isomFun} \in  \Hisom{\pc_V}{\pc_{W'}}{\bF_q^{\N}}  \) and \(\lift{\isomfun} \in  \Hisom{\pc_W}{\pc_V}{\bF_q^{\N}}  \), then \[\Phi(\lift{\isomFun} \circ \lift{\isomfun} ) = \Phi(\lift{\isomFun}) \circ \Phi(\lift{\isomfun}). \]
Furthermore for any two linear codes \(\cD_V \subset V\) and \(\cD_W \subset W\), the following restriction of \(\Phi\) is also a bijection:  
\[\Phi': \Hisom{\pc_V}{\pc_W}{\bF_q^{\N}} \cap \Hisom{\varphi^{-1}(\cD_V)}{\varphi^{-1}(\cD_W)}{\bF_q^{\N}} \to \Fixisom{\cD_V}{\cD_W}{V}{W},\] where \(\Phi' \coloneqq \Phi|_{ \Hisom{\pc_V}{\pc_W}{\bF_q^{\N}} \cap \Hisom{\varphi^{-1}(\cD_V)}{\varphi^{-1}(\cD_W)}{\bF_q^{\N}}}\). 

\end{theorem}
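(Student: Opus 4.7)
My plan is to establish the four assertions of the theorem in order: well-definedness of $\Phi$, bijectivity, functoriality under composition, and the corresponding statement for the restriction $\Phi'$. The main engine will be the Isometry Property of Theorem~\ref{thm: universal property}, applied in view of the identifications $\wt_{\cF_V} = \Qwt{\varphi_V}$ and $\wt_{\cF_W} = \Qwt{\varphi_W}$ coming from Proposition~\ref{prop: proj iff quotient 1}.

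For well-definedness, I start from $\lift{\isomfun} \in \Hisom{\pc_V}{\pc_W}{\bF_q^{\N}}$. Since $\lift{\isomfun}$ is a Hamming isometry with $\lift{\isomfun}(\pc_V) = \pc_W$, its bijectivity gives
\[
\ker(\varphi_W \circ \lift{\isomfun}) \;=\; \lift{\isomfun}^{-1}(\pc_W) \;=\; \pc_V \;=\; \ker(\varphi_V).
\]
The composite $\varphi_W \circ \lift{\isomfun}$ is a surjective contraction $(\bF_q^{\N},\wtH) \twoheadrightarrow (W,\wt_{\cF_W})$, so the Isometry Property applied with $f = \varphi_W \circ \lift{\isomfun}$ and $\varphi = \varphi_V$ produces a unique linear isometry $\isomfun\colon V \to W$ making diagram~(\ref{eq: diagram comm}) commute; this is $\Phi(\lift{\isomfun})$.

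For bijectivity I will invoke Observation~\ref{obs: lifting of projective isometry}. Injectivity is immediate: if $\Phi(\lift{\isomfun_1}) = \Phi(\lift{\isomfun_2}) = \isomfun$, both $\lift{\isomfun_i}$ make diagram~(\ref{eq: diagram comm}) commute for the same $\isomfun$, and the observation asserts uniqueness of such a Hamming lift. For surjectivity, given $\isomfun \in \isom_{\cF}(V,W)$, the observation yields $\lift{\isomfun}$ making the diagram commute; to see $\lift{\isomfun}(\pc_V) = \pc_W$, a diagram chase shows $\varphi_W(\lift{\isomfun}(c)) = \isomfun(\varphi_V(c)) = 0$ for every $c \in \pc_V$, and equality of images follows from bijectivity of $\lift{\isomfun}$ combined with the dimension count $\dim \pc_V = \N - \dim V = \N - \dim W = \dim \pc_W$.

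Compositionality is obtained by stacking: placing the commuting squares for $\lift{\isomfun}$ and $\lift{\isomFun}$ side by side yields a commuting outer rectangle, so $\Phi(\lift{\isomFun}) \circ \Phi(\lift{\isomfun})$ fulfils the defining property of $\Phi(\lift{\isomFun} \circ \lift{\isomfun})$, and uniqueness from the Isometry Property closes the argument. Finally, for the restriction $\Phi'$, the key step is the set-theoretic equivalence
\[
\lift{\isomfun}\bigl(\varphi_V^{-1}(\cD_V)\bigr) = \varphi_W^{-1}(\cD_W) \;\Longleftrightarrow\; \isomfun(\cD_V) = \cD_W,
\]
which follows from commutativity of the square together with surjectivity of $\varphi_V$ and $\varphi_W$, applied in both directions using $\lift{\isomfun}$ and $\lift{\isomfun}^{-1}$. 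I expect the main subtlety to lie in the well-definedness step, since it requires simultaneously matching kernels and checking the quotient-weight equality through the Isometry Property; everything afterward reduces to diagrammatic bookkeeping.
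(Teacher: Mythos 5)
Your proposal is correct and follows essentially the same route as the paper: well-definedness via the Isometry Property applied to $\varphi_W\circ\lift{\isomfun}$ with $\ker(\varphi_W\circ\lift{\isomfun})=\pc_V=\ker(\varphi_V)$, surjectivity and the $\Phi'$ statement via the lift of Observation~\ref{obs: lifting of projective isometry} and a diagram chase, and compositionality by pasting squares and invoking uniqueness. The only (minor) divergence is injectivity, which you get directly from the uniqueness of the Hamming lift in Observation~\ref{obs: lifting of projective isometry}, whereas the paper reduces to showing $\Phi(\lift{\isomfun})=\operatorname{Id}\Rightarrow\lift{\isomfun}=\operatorname{Id}$ using compositionality; both are valid and of comparable length.
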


\begin{proof}
Since \(\lift{\isomfun} \in  \Hisom{\pc_V}{\pc_W}{\bF_q^{\N}}\), we have \(\ker(\varphi_W \circ \isomfun) = \pc_V = \ker(\varphi_V)\). By the Isometry Property \ref{prop: Isometry Property}, there exists a unique isometry 
\(\isomfun:(V, \Qwt{\varphi}) \to (W,  \Qwt{\varphi_W})\), such that the diagram above commutes.

We then define \(\Phi(\lift{\isomfun}) \coloneqq \isomfun\). To show that \(\Phi\) behaves well under composition: for any \(\lift{\isomfun}, \lift{\isomFun}\), the following composition diagram shows that 
\(\Phi(\lift{\isomfun} \circ \lift{\isomFun}) = \Phi(\lift{\isomfun}) \circ \Phi(\lift{\isomFun})\), by uniqueness of the projective isometry that makes the diagram commute.

\[
\begin{tikzcd}[ampersand replacement=\&]
	W \& V \& W' \\
	{\Ff_q^{\N}} \& {\Ff_q^{\N}} \& {\Ff_q^{\N}}
	\arrow["{\Phi(\lift{\isomFun})}", from=1-1, to=1-2]
	\arrow["{\Phi(\lift{\isomfun})}", from=1-2, to=1-3]
	\arrow["\varphi_V", from=2-2, to=1-2]
	\arrow["\varphi_W", from=2-1, to=1-1]
	\arrow["\lift{\isomFun}", from=2-1, to=2-2]
	\arrow["\lift{\isomfun}", from=2-2, to=2-3]
	\arrow["\varphi_{W'}", from=2-3, to=1-3]
	\arrow["{\Phi(\lift{\isomfun}) \circ \Phi(\lift{\isomFun})}", bend left=40, from=1-1, to=1-3]
\end{tikzcd}
\]

To show injectivity, since \(\Phi\) behaves well under composition it is sufficient to show that if \(\Phi(\lift{\isomfun}) = Id\) then \(\lift{\isomfun}=Id\). If \(\lift{\isomfun}(e_i)=e_j\), then \(\varphi_V(e_i-e_j) = f_i - \varphi_V(\lift{\isomfun}(e_i)) = f_i - \isomfun(\varphi_V(e_i)) = 0\). We conclude that \(e_i - e_{j}\in \pc_V = \ker(\varphi_V)\), so \(j = i\) since the elements in \(\cF_V\) are pairwise linearly independent, and thus \(\lift{\isomfun} = Id\).

To show surjectivity: for \(\isomfun \in \isom_{\cF}(V,W)\), define \(\lift{\isomfun}\) as in Observation \ref{obs: lifting of projective isometry}. Then, since the diagram \eqref{eq: diagram comm} commutes, \(\lift{\isomfun} \in \Hisom{\pc_V}{\pc_W}{\bF_q^N}\), and \(\Phi(\lift{\isomfun}) = \isomfun\). Hence, \(\Phi\) is a bijection.

Given \(\cD_V,\cD_W\) subcodes of \(V\) and \(W\) respectively, we now consider the restriction \(\Phi'\) of \(\Phi\) on \( \Hisom{\pc_V}{\pc_W}{\bF_q^{\N}} \cap \Hisom{\varphi_V^{-1}(\cD_V)}{\varphi_W^{-1}(\cD_W)}{\bF_q^{\N}}\). We check that \(\isomfun(\cD_V) = \cD_W\): since \(\lift{\isomfun} \in \Hisom{\pc_V}{\pc_W}{\bF_q^{\N}}\), we have:
\[
\isomfun(\cD_V) = \varphi_W(\lift{\isomfun}(\varphi_V^{-1}(\cD_V))) = \varphi_W(\varphi_W^{-1}(\cD_W)) = \cD_W.
\]

\(\Phi'\) is injective because \(\Phi\) is. To show surjectivity, we need to show that \(\lift{\isomfun}\) as defined in Observation \ref{obs: lifting of projective isometry} is in \(  \Hisom{\varphi^{-1}(\cD_V)}{\varphi_W^{-1}(\cD_W)}{\bF_q^n}\) and \(\lift{\isomfun} \in \Hisom{\pc_V}{\pc_W}{\bF_q^{\N}}\), but this is true since diagram \eqref{eq: diagram comm} commutes and \(\isomfun\) is in \(\Fixisom{\cD_V}{\cD_W}{V}{W}\).

\end{proof}

Finally, if \(V=W\) and \(\cD = \cD_V=\cD_W\), then since \(\Hisom{\varphi^{-1}(\cD)}{\varphi^{-1}(\cD)}{\bF_q^{\N}} = \stab_{\operatorname{H}}(\varphi^{-1}(\cD))\) and \(\Fisom{\cD}{\cD}{V}= \stab_{\cF}(D)\).
We have:
\begin{corollary}\label{cor: group isom stab}
   \( \stab_{\operatorname{H}}(\varphi^{-1}(\cD))\cong  \stab_{\cF}(D) \) as groups. In particular for \(\cD=(0)\) we have
   \[
   \stab_{\operatorname{H}}(\pc)\cong \stab_{\cF}((0)) = \isom_{\cF}(V).
   \]
\end{corollary}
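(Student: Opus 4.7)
The plan is to derive the corollary as a direct specialization of Theorem~\ref{thm: isometry isomorphism} to the endomorphism case $V=W$, $\cD=\cD_V=\cD_W$, and to promote the resulting set bijection to a group isomorphism using the composition-compatibility already established in the theorem.

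First, writing $\varphi=\varphi_V=\varphi_W$ and $\pc=\pc_V=\pc_W$, the bijection $\Phi'$ of the theorem specializes to
\[
\Phi':\;\stab_{\operatorname{H}}(\pc)\cap\stab_{\operatorname{H}}(\varphi^{-1}(\cD))\;\longrightarrow\;\stab_{\cF}(\cD),
\]
via the identifications $\Hisom{\cC}{\cC}{\bF_q^\N}=\stab_{\operatorname{H}}(\cC)$ and $\Fisom{\cD}{\cD}{V}=\stab_{\cF}(\cD)$ recalled immediately before the corollary statement. Both the domain and the codomain of $\Phi'$ are subgroups---of $\aut_{\operatorname{H}}(\bF_q^\N)$ and $\aut_{\cF}(V)$ respectively---under composition of linear maps.

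Second, I would verify that $\Phi'$ is a group homomorphism. The composition identity $\Phi(\lift{\isomFun}\circ\lift{\isomfun})=\Phi(\lift{\isomFun})\circ\Phi(\lift{\isomfun})$ proved in Theorem~\ref{thm: isometry isomorphism} restricts verbatim to $\Phi'$, and the equality $\Phi'(\operatorname{id}_{\bF_q^\N})=\operatorname{id}_V$ is immediate from the uniqueness clause of the Isometry Property~\ref{prop: Isometry Property} used to define $\Phi$ through diagram~\eqref{eq: diagram comm}. Combined with the bijectivity already in hand, this promotes $\Phi'$ to the claimed group isomorphism.

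Finally, the ``in particular'' case $\cD=(0)$ follows at once: since $\varphi^{-1}((0))=\ker(\varphi)=\pc$, the intersection collapses to $\stab_{\operatorname{H}}(\pc)$ alone, while $\stab_{\cF}((0))=\isom_{\cF}(V)$ holds by definition, as every linear automorphism of $V$ fixes the zero subcode. I do not foresee any substantial obstacle: once Theorem~\ref{thm: isometry isomorphism} is available, the corollary reduces to a bookkeeping statement rephrasing its endomorphism instance in the language of stabilizer subgroups.
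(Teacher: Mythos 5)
Your proof is correct and follows essentially the same route as the paper: there, Corollary~\ref{cor: group isom stab} is obtained precisely as the specialization of Theorem~\ref{thm: isometry isomorphism} to $V=W$ and $\cD_V=\cD_W=\cD$, with the composition identity for $\Phi$ supplying the group-homomorphism property. If anything, you are slightly more careful than the paper, since you retain the domain $\stab_{\operatorname{H}}(\pc)\cap\stab_{\operatorname{H}}(\varphi^{-1}(\cD))$ that the theorem actually yields, whereas the corollary's statement silently drops the first factor (harmless in the highlighted case $\cD=(0)$, where $\varphi^{-1}((0))=\pc$ and the two sets coincide).
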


\subsection{Decoding of the parent code and projective weight algorithms.  }\label{subsec: decoding algorithm}
For general families $\F$, it is computationally hard (NP-hard) to calculate the $\F$-weight of a vector. For example, in the context of tensor codes (see Example \ref{example: tensor rank} and \cite{byrne2021tensor, roth1996tensor}), calculating the tensor rank of a $d$-tensor for $d \geq 3$ is known to be NP-hard. However in this subsection we show that if a fast decoding algorithm for the parent code is known, it can be used to easily calculate the  weight in the corresponding projective metric.

\begin{definition}(Decoding algorithm)
    Let \(\pc \subset \bF_q^{\N}\) be a linear code. A minimum distance decoder (for $\pc$ in the Hamming metric) is an algorithm $f$ that for any input $y \in \bF_q^{\N}$ outputs a vector $f(y) \in \pc$ satisfying \(d_{\operatorname{H}}(f(y),y)= \min_{c' \in \pc}d_{\operatorname{H}}(c',y)\).
\end{definition}

\begin{definition}\label{def: coset leader weight}
Let $C$ be a subspace of a weighted vector space $(X,\wt_X)$ and $y \in X$. The \myfont{coset leader weight} of the coset $y + C$ is given by
\[
\wt_X(y+C) := \min\left\{\wt_X(x) \mid x \in y + C\right\}.
\]
An element $x \in y +C$ attaining this minimum is called a \myfont{coset leader}.
\end{definition}


\begin{observation}\label{obs: proj weight cose weight}
 Let $\cF \subset \Gr_1(V)$ be a spanning family with corresponding parent function $\varphi: \Ff_q^\N \to V$ and parent code $\pc \in \Ff_q^{\N}$, with $\Ff_q^{\N}$ endowed with the Hamming weight $\wtH$.
 Then an element $x \in y+\pc$ is a coset leader of $y+\pc \in  {\Ff^{\N}_q}/{\pc} $ if and only if $\wtH(x) = \wt_{\cF}(\varphi(x))$. This follows directly from the interpretation of projective metrics as quotient weights:  $\wt_\F(\varphi(x)) = \wt_\F(\varphi(y)) = \Qwt{\varphi}(\varphi(y)) = \min\{\wtH(x') \mid x' \in y+\pc\} = \wtH(y+\pc)$.
\end{observation}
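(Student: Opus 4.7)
The plan is to reduce the statement to a direct comparison using the identification $\wt_\F = \Qwt{\varphi}$ established in Proposition \ref{prop: proj iff quotient 1}. Once this identification is in hand, the rest should be a matter of unpacking definitions and tracking cosets of $\pc = \ker(\varphi)$ in $\Ff_q^\N$, without any real combinatorial content.

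First, I would observe that for any $x \in y + \pc$, linearity of $\varphi$ together with $\pc = \ker(\varphi)$ gives $\varphi(x) = \varphi(y)$, and the fiber $\varphi^{-1}(\varphi(x))$ is exactly the coset $x + \pc = y + \pc$. Applying Proposition \ref{prop: proj iff quotient 1} and Definition \ref{def: quotient weight} then yields the chain of equalities
\[
\wt_\F(\varphi(x)) \,=\, \Qwt{\varphi}(\varphi(x)) \,=\, \min\{\wtH(w) : w \in \varphi^{-1}(\varphi(x))\} \,=\, \min\{\wtH(w) : w \in y + \pc\},
\]
which is precisely the coset leader weight $\wtH(y+\pc)$ from Definition \ref{def: coset leader weight}.

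From this identity the equivalence to be proved is immediate: by Definition \ref{def: coset leader weight}, an element $x \in y + \pc$ is a coset leader of $y + \pc$ if and only if $\wtH(x) = \wtH(y + \pc)$, and the chain above shows this is the same condition as $\wtH(x) = \wt_\F(\varphi(x))$. I do not expect any real obstacle here: the entire content sits in Proposition \ref{prop: proj iff quotient 1}, and once that identification is available the proof is effectively a diagram chase at the level of definitions, with no case analysis or quantifier juggling required.
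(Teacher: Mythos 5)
Your proposal is correct and follows exactly the paper's own argument: both identify $\wt_\F$ with $\Qwt{\varphi}$ via Proposition \ref{prop: proj iff quotient 1}, note that $\varphi^{-1}(\varphi(y)) = y + \pc$, and unwind the definitions of quotient weight and coset leader weight to obtain the same chain of equalities. Nothing is missing.
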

\medskip

This shows a direct relation between coset leader weights and projective weights. Together with minimum distance decoding, the problems of calculating these weights are equivalent, as descibed and often used in the works of Gabidulin and coauthors \cite{gabidulin1997metrics,gabidulin1998metrics,gabidulin2003codes}


\begin{proposition}\label{prop: decoding weight algorithms eq}
 Let $\cF = \{f_1,f_2,\ldots\} \subset \Gr_1(V)$ be a spanning family with corresponding parent function $\varphi: \Ff_q^\N \to V$ and parent code $\pc \in \Ff_q^{\N}$. Then the following problems are polynomial-time (in $\N$) reducible to each-other:
 \begin{enumerate}
     \item \textbf{Minimum distance decoding}: given $y \in \Ff_q^\N $, find $c \in \pc$  satisfying \(d_{\operatorname{H}}(c,y) = \min_{c' \in \pc}d_{\operatorname{H}}(c',y)\);
     \item \textbf{Coset leader finding}: in a given coset $y + \pc \subset \Ff_q^\N$, find a coset leader $x$ with respect to the Hamming weight.
     \item \textbf{Projective weight calculation and representation}: given $v \in V$, find a linear combination $v = \alpha_1 f_{i_1} + \alpha_2 f_{i_2} + \ldots $ of length $\wt_\F(v)$ with $\alpha_i \in \Ff_q$, $f_i \in \F$.
 \end{enumerate}
\end{proposition}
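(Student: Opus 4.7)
The plan is to set up pairwise reductions using the key connection established in Observation \ref{obs: proj weight cose weight} between coset leaders and minimal projective representations. I will prove (1) $\Leftrightarrow$ (2) using only that $\pc$ is a linear subspace, and then (2) $\Leftrightarrow$ (3) using the parent function correspondence; the reduction (1) $\Leftrightarrow$ (3) then follows by composition. All six maps will turn out to be coordinate bookkeeping plus at most one Gaussian elimination.

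For (1) $\Leftrightarrow$ (2), the central observation is that $c \mapsto y - c$ is a bijection from $\pc$ onto the coset $y + \pc$, and under this bijection $d_H(y, c) = \wt_H(y - c)$. Hence the minimum over $c \in \pc$ of $d_H(y, c)$ equals the coset leader weight of $y + \pc$, and a closest codeword $c \in \pc$ to $y$ corresponds to a coset leader $x = y - c$. Concretely: given a minimum distance decoder $f$, on input $y$ output $x := y - f(y)$ as the coset leader; conversely, given a coset leader finder, on input $y$ compute a coset leader $x$ of $y + \pc$ and output $c := y - x \in \pc$. Both reductions run in $O(\N)$.

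For (2) $\Leftrightarrow$ (3), fix a parent function $\varphi: \Ff_q^\N \to V$ with $\varphi(e_i) = \lambda_i f_{\sigma(i)}$ for known scalars $\lambda_i \in \Ff_q \setminus \{0\}$ and permutation $\sigma \in S_\N$ (Definition \ref{def: parent function and parent code}). Given an instance $v$ of (3), compute any preimage $y \in \varphi^{-1}(v)$ by Gaussian elimination on a matrix representation of $\varphi$ (polynomial in $\N$), and invoke a solver for (2) on the coset $y + \pc = \varphi^{-1}(v)$ to obtain a coset leader $x = (x_1, \ldots, x_\N)$. By Observation \ref{obs: proj weight cose weight}, $\wt_H(x) = \wt_\F(v)$, so expanding $v = \varphi(x) = \sum_{i : x_i \neq 0} x_i \lambda_i f_{\sigma(i)}$ yields a linear combination of length $\wt_\F(v)$ as required. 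Conversely, given $v = \sum_{j=1}^r \alpha_j f_{i_j}$ with $r = \wt_\F(v)$, define $x \in \Ff_q^\N$ by $x_{\sigma^{-1}(i_j)} := \alpha_j / \lambda_{\sigma^{-1}(i_j)}$ for $j = 1, \ldots, r$ and zero elsewhere; then $\varphi(x) = v$ and $\wt_H(x) = r = \wt_\F(v)$, so by Observation \ref{obs: proj weight cose weight} the vector $x$ is a coset leader of its own coset $x + \pc = \varphi^{-1}(v)$, which solves (2) for any $y$ in that coset.

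There is no real conceptual obstacle here: once Observation \ref{obs: proj weight cose weight} is invoked, the three problems become three views of the same computation, namely finding a minimum Hamming weight preimage of $v$ under $\varphi$. The only step requiring care is the translation between coordinates of $\Ff_q^\N$ and elements of $\F$ through the permutation $\sigma$ and scalars $\lambda_i$ encoded by the chosen parent function; getting this bijection right, together with computing an initial preimage $y \in \varphi^{-1}(v)$ in the reduction from (3) to (2), is the only thing to check.
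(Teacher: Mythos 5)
Your proposal is correct and follows essentially the same route as the paper's proof: both rest on Observation \ref{obs: proj weight cose weight}, handle (1) $\Leftrightarrow$ (2) via the correspondence $x = y - c$, and translate between (2) and (3) through the parent function's coordinate correspondence. Your version is slightly more careful than the paper's in making the scalars $\lambda_i$, the permutation $\sigma$, and the Gaussian-elimination step for computing an initial preimage explicit, but this is bookkeeping the paper implicitly absorbs by normalizing $\varphi(e_i) = f_i$.
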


 \begin{proof}
 
 $1 \Longleftrightarrow 2$. This equivalence is immediate under the correspondence $x = y-c$.

$2 \Longrightarrow 3$. Given a coset $y + \pc$, let $v := \varphi(y) = \varphi(y + \pc)$. If $v = \alpha_1 f_{i_1} + \alpha_2 f_{i_2} + \ldots $ is a minimum linear combination of length $\wt_\F(v)$, then $ x = \sum_{j = 1}^{\wt_\F(v)} \alpha_j e_{i_j}$ is a coset leader in $\varphi^{-1}(v) = y + \pc$, where $\{e_1,\ldots,e_\N\}$ is the standard basis of $\Ff_q^\N$.

$3 \, \Longrightarrow 2$. Conversely, given $v \in V$, let $y + \pc := \varphi^{-1}(v)$. If $x = \sum_{j = 1}^{\N} \alpha_j e_{i}$ is a coset leader, then $v = \varphi(x) = \sum_{j = 1}^{\N} \alpha_j f_{i}$ is a minimum linear combination with $\wt_\F(v)$ non-zero terms.

For both directions we used the above Observation \ref{obs: proj weight cose weight} that the coset leader weight and the projective weight are equal.
 \end{proof}

 This proof also shows how to directly obtain a projective weight calculation algorithm from a minimum distance decoder $f$: given $v \in V$, let $y \in \varphi^{-1}(v)$. Then $\wt_\F(v) = \wtH(y - f(y))$.



\newpage

\section{Embeddings of scale-translation-invariant metrics into projective metrics}\label{section: embedding in projective metrics}

Throughout this section, let $(V, \wt_V)$ be an $n$-dimensional vector space over $\Ff_q$ with \textit{finitely-valued} scale-invariant weight $\wt_V$. We will establish a bijection between the sets
\begin{equation}\label{eq: bijection subspaces and projective metrics}
\left\{\substack{\text{Hamming metric subspaces}\\ \text{with $(V,\wt_V)$ as quotient}}\right\} \,\overset{\cong}{\longleftrightarrow} \,
\left\{\substack{\text{Projective metric spaces}\\ \text{containing $(V,\wt_V)$ as subspace}}\right\} 
\end{equation}
(up to isomorphism/equivalence), and show that these sets are non-empty. As a consequence, any finitely-valued scale-invariant weighted space $(V,\wt_V)$ can be embedded in a projective metric space.\\
\\
A \emph{Hamming metric subspace $C$ with $(V,\wt_V)$ as quotient} consists of the following data:\\
\\
\begin{minipage}{0.6\textwidth}
\begin{enumerate}
    \item Integers $r \geq s \geq n$;
    \item The space $\Ff_q^r$ endowed with the Hamming metric;
    \item A subspace
    $C \subseteq \Ff_q^r$ of dimension $s$, endowed with the Hamming metric on $\Ff_q^r$ restricted to $C$;
    \item The inclusion map $\rho: C \hookrightarrow \Ff_q^r$;
    \item A surjective linear map $\varphi : C \twoheadrightarrow V$ such that\\ $\wt_V = \wt_{\operatorname{quot},\varphi}$.
\end{enumerate}
\end{minipage}
\begin{minipage}{0.3\textwidth}
\text{}\\
\\
\begin{tikzcd}
& (\Ff_q^r, \wtH)    \\
\text{}\qquad(C,  \wtH|_{C}) \arrow[ru, "\rho", hook] \arrow[rd, "\varphi"', two heads] &                 \\
    & {\scalebox{1.3}{$\substack{(V,\wt_V) \\ = (V,\wt_{\operatorname{quot},\varphi}) }$}} 
\end{tikzcd}
\end{minipage}

\text{}\\
\\
We will refer to this data collectively by the tuple \(\HSubspaceQuotient\).

\text{}\\
Dually, a \emph{projective metric space $W$ with $(V,\wt_V)$ as subspace} consists of the following data:\\
\\
\begin{minipage}{0.6\textwidth}
\begin{enumerate}

    \item Integers $r \geq t \geq n$;
    \item The space $\Ff_q^r$ endowed with the Hamming metric;
     \item A surjective map $\psi : \Ff_q^r \twoheadrightarrow W$, with $W$ a space of dimension $t$ endowed with the quotient weight $\wt_W := \wt_{\operatorname{quot},\psi}$; 
    \item An injective linear map $\iota: V \hookrightarrow W$ that constitutes an embedding $(V,\wt_V ) \hookrightarrow (W,\wt_W )$, i.e. $\wt_V(x) = \wt_W(\iota(x))$ for all $x \in V$.
\end{enumerate}
\end{minipage}
\begin{minipage}{0.3\textwidth}
\text{}\\
\\
\begin{tikzcd}
{(\Ff_q^r,\wtH)} \arrow[rd, "\psi", two heads]                         &             \\
& {(W,\wt_{\operatorname{quot},\psi})} \\
{\scalebox{1.3}{$\substack{\text{}\quad(V,\wt_V) \\ \text{}\quad \cong (\iota(V),\wt_W|_{\iota(V)}) }$}} \arrow[ru, "\iota"', hook] &        
\end{tikzcd}
\end{minipage}

\text{}\\
\\
We will refer to this data collectively by the tuple \(\PSubspace\).

\subsection{Existence}

First we show that there exists a Hamming metric space $\HSubspaceQuotient$ with $(V,\wt_V)$ as quotient, i.e. the set on the LHS of equation \eqref{eq: bijection subspaces and projective metrics} is non-empty.   

Consider a set $\{v_1,v_2,\ldots,v_N\}\subset V$ of representatives of all 1-dimensional spaces in $\Gr_1(V)$ of size $N = \frac{q^n-1}{q-1}$. A \myfont{free weighted space on $V$} is defined as a space $\Ff_q^N$ with standard basis $\{e_1,e_2,\ldots,e_N\}$ endowed with the \myfont{free $V$-weight} given by
\[
\wt_{\operatorname{free},V}(x) := \sum_{\substack{ i \in \{1,\ldots,N\} \\ x^{(i)} \neq 0}}\wt_V(v_i)
\]
for any $x = (x^{(1)},x^{(2)},\ldots,x^{(N)}) = \sum_{i=1}^N x^{(i)} e_i \in \Ff_q^N$.

Note that free weighted spaces on $V$ heavily depend on the choice of representatives, but are all linearly isometric.

\begin{example}
 Let $V$ be endowed with the discrete metric. Then the free $V$-weight $\wt_{\operatorname{free},V}$ on $\Ff_q^N$ with respect to the standard basis is equal to the Hamming weight.  
\end{example}

Now consider a free weighted space $(\Ff_q^N, \wt_{\operatorname{free},V})$ with basis $\{e_i\}_{i = 1}^N$ and the surjective linear map
\begin{align}\label{eq: varphi 1}
\varphi: \Ff_q^{N} &\twoheadrightarrow V :\,  (x^{(1)},x^{(2)},\ldots,x^{(N)}) \mapsto \sum_{i = 1}^N x^{(i)} v_i
\end{align}
represented by the matrix
\[
M_{\varphi} = \left(
\begin{array}{c}
 - \, v_1 \, - \\
  \hline
  - \, v_2\, -\\
  \hline
   \vdots\\
  \hline
  - \,v_N \,-\\
\end{array}
\right).
\]
\begin{lemma}\label{lemma: quotient of Hamming subspace}
The weight $\wt_V$  is equal to the quotient weight $\wt_{\operatorname{quot},\varphi}$ on $V$ induced by $\varphi$.
\end{lemma}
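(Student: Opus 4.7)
The plan is to verify both inequalities $\wt_V(v) \leq \wt_{\operatorname{quot},\varphi}(v)$ and $\wt_V(v) \geq \wt_{\operatorname{quot},\varphi}(v)$ for every $v \in V$, using only the scale-invariance of $\wt_V$, the triangle inequality, and the fact that the representatives $v_1,\ldots,v_N$ cover every line of $V$.

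For the inequality $\wt_{\operatorname{quot},\varphi}(v) \leq \wt_V(v)$, I would exhibit a single preimage of small weight. Since the $v_i$'s represent all elements of $\Gr_1(V)$, for every nonzero $v \in V$ there is an index $i$ and a scalar $\lambda \in \Ff_q \setminus \{0\}$ with $v = \lambda v_i$. Then $\lambda e_i \in \varphi^{-1}(v)$ and
\[
\wt_{\operatorname{free},V}(\lambda e_i) = \wt_V(v_i) = \wt_V(\lambda v_i) = \wt_V(v),
\]
where the middle equality uses scale-invariance (W4) of $\wt_V$. The case $v = 0$ is trivial via $x = 0$. This gives $\wt_{\operatorname{quot},\varphi}(v) \leq \wt_{\operatorname{free},V}(\lambda e_i) = \wt_V(v)$.

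For the reverse inequality $\wt_V(v) \leq \wt_{\operatorname{quot},\varphi}(v)$, I would take an arbitrary $x = \sum_{i=1}^N x^{(i)} e_i \in \varphi^{-1}(v)$. Then $v = \sum_{i : x^{(i)} \neq 0} x^{(i)} v_i$, and combining the triangle inequality (W3) with scale-invariance (W4) yields
\[
\wt_V(v) \leq \sum_{i : x^{(i)} \neq 0} \wt_V(x^{(i)} v_i) = \sum_{i : x^{(i)} \neq 0} \wt_V(v_i) = \wt_{\operatorname{free},V}(x).
\]
Taking the minimum over $x \in \varphi^{-1}(v)$ gives $\wt_V(v) \leq \wt_{\operatorname{quot},\varphi}(v)$.

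There is no real obstacle here: the statement is essentially the observation that, when one allows all lines of $V$ as generators and weights them by $\wt_V$, the cheapest way to build $v$ as a signed sum of generators is to use $v$ itself (one generator, cost $\wt_V(v)$), and the triangle inequality forbids doing better. The only things to be careful about are (i) invoking scale-invariance to pass between $\wt_V(v_i)$ and $\wt_V(x^{(i)} v_i)$, and (ii) handling the $v=0$ case separately (where $\varphi^{-1}(0) \ni 0$ already attains weight $0$). No further structure of $\wt_V$ beyond (W0)--(W4) is needed.
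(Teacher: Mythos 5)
Your proof is correct and follows essentially the same route as the paper: exhibit the single-coordinate preimage $\lambda e_i$ of $v$ to get $\wt_{\operatorname{quot},\varphi}(v)\le\wt_V(v)$, and use scale-invariance plus the triangle inequality on an arbitrary preimage for the reverse inequality. Your explicit handling of the $v=0$ case and the explicit mention of where (W4) enters are minor additions of care, not a different argument.
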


\begin{proof}
Let $v \in V$.  By definition of free weighted spaces there is a basis vector $e_j$ such that $\varphi(\lambda e_j) = v$ for some $j$ and $\lambda \in \Ff_q$. Hence $\wt_{\operatorname{quot},\varphi}(v) \leq \wt_{\operatorname{free},V}(\lambda e_j) = \wt_V(v)$. On the other hand, if $x = (x^{(1)}, x^{(2)}, \ldots,x^{(N)}) \in \varphi^{-1}(v)$, then $v = \sum_{i=1}^N x^{(i)} v_i$ and so
\[
\wt_{\operatorname{free},V}(x) = \sum_{\substack{ i \in \{1,\ldots,N\} \\ x^{(i)} \neq 0}}\wt_V(v_i) = \sum_{\substack{ i = 1 }}^N \wt_V(x^{(i)} v_i) \geq \wt_V\left( \sum_{i = 1}^N x^{(i)} v_i \right) = \wt_V(v)
\]
by the triangle inequality. From the definition of the quotient metric we thus obtain $\wt_{\operatorname{quot},\varphi}(v) \geq \wt_V(v)$.
\end{proof}

As $\wt_{V}$ is finitely valued, we introduce

\[
t_i := \wt_V(v_i)
\]
\[
\mathds{1}_{t} := (\overbrace{1 \, 1 \cdots 1 }^t)  \qquad \mathbb{O}_{t} := (\overbrace{0 \, 0 \cdots 0 }^{t} )
\]

for $t \in \Nn$. With these vectors we define the injective linear map
\begin{align}\label{eq: rho 1}
\rho: \Ff_q^N &\to \Ff_q^{\text{}^{\scalebox{0.75}{$\sum t_i$}}} : \, (x_1,x_2,\ldots,x_N) \mapsto (x_1 \mathds{1}_{t_1} \mid x_2 \mathds{1}_{t_2} \mid \cdots \mid x_N \mathds{1}_{t_N} ).
\end{align}
This map is represented by the matrix
\begin{align*}
M_{\rho}   
&= 
\left(
\begin{array}{c|c|c|c}
  e_1^\top \outertimes \mathds{1}_{t_1}  &  e_2^\top \outertimes \mathds{1}_{t_2} & \cdots &  e_N^\top \outertimes \mathds{1}_{t_N}\\
\end{array}
\right)\\
&= 
\left(
\begin{array}{c|c|c|c}
  \mathds{1}_{t_1}  & \mathbb{O}_{t_2} & \cdots & \mathbb{O}_{t_N}\\
  \hline
   \mathbb{O}_{t_1} &  \mathds{1}_{t_2}
   &  \cdots & \mathbb{O}_{t_N}  \\ 
   \hline
   \vdots & \vdots & \ddots & \vdots\\
   \hline
    \mathbb{O}_{t_1} &  \mathbb{O}_{t_2}  & \cdots & \mathds{1}_{t_N}\\
\end{array}
\right) \in \Ff_q^{N \times \sum t_i}
\end{align*}
Note that the Hamming weight $\wtH$ of the vector $\mathds{1}_{t}$ is equal to $t$, so for $x = (x^{(1)},x^{(2)},\ldots,x^{(N)}) \in \Ff_q^N$ we have
\[
\wt_{\operatorname{free},V}(x) = \sum_{\substack{ i \in \{1,\ldots,N\} \\ x^{(i)} \neq 0}}\wt_V(v_i) =  \sum_{\substack{ i = 1 }}^N \wtH(x^{(i)} \mathds{1}_{t_i}) = \wtH(\rho(x))
\]
and thus the following lemma holds.
\begin{lemma}
Suppose $\Ff_q^{\text{}^{\scalebox{0.75}{$\sum t_i$}}}$ is endowed with the Hamming weight $\wtH$. Then $\rho: (\Ff_q^N, \wt_{\operatorname{free},V}) \to (\Ff_q^{\text{}^{\scalebox{0.75}{$\sum t_i$}}}, \wtH)$ is an embedding.
\end{lemma}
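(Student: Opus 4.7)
The plan is to verify directly the two defining properties of a linear embedding (as introduced in Subsection~\ref{subsec: weighted vector spaces and contractions}): that $\rho$ is linear and that it preserves the weight, i.e. $\wtH(\rho(x)) = \wt_{\operatorname{free},V}(x)$ for every $x \in \Ff_q^N$. Recall that a weight-preserving linear map is automatically injective by positive definiteness of $\wt_{\operatorname{free},V}$ (property~\textit{(W1)}), so proving these two facts already yields the embedding claim without any additional argument.

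First I would note that linearity of $\rho$ is immediate from the block definition, since each coordinate of $\rho(x)$ is a linear function of the entries $x^{(1)},\ldots,x^{(N)}$; equivalently, $\rho$ is represented by the matrix $M_\rho$ displayed just above the lemma.

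The key step is the weight identity, which is essentially already carried out in the display preceding the statement, so the plan is really to package that computation cleanly. Writing $x = (x^{(1)},\ldots,x^{(N)})$, the vector $\rho(x)$ decomposes into $N$ disjoint coordinate blocks of lengths $t_1, \ldots, t_N$, the $i$-th block being $x^{(i)}\mathds{1}_{t_i}$. Since these blocks occupy disjoint coordinate ranges, the Hamming weight is additive over them:
\[
\wtH(\rho(x)) = \sum_{i=1}^N \wtH\!\left(x^{(i)} \mathds{1}_{t_i}\right).
\]
Each term equals $t_i$ if $x^{(i)} \neq 0$ and $0$ otherwise, because $\mathds{1}_{t_i}$ has all $t_i$ entries nonzero. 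Summing over the indices where $x^{(i)} \neq 0$ and using $t_i = \wt_V(v_i)$, the right-hand side becomes $\sum_{i : x^{(i)} \neq 0} \wt_V(v_i)$, which is precisely $\wt_{\operatorname{free},V}(x)$ by definition of the free $V$-weight.

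There is no substantial obstacle here: the disjointness of the blocks of $\rho(x)$ makes the Hamming additivity trivial, and the rest is definition chasing. One minor point worth making explicit is that we do \emph{not} need any triangle-inequality style argument (as was required in the previous Lemma~\ref{lemma: quotient of Hamming subspace}), because equality is achieved block by block---this is the whole point of extending the length by $t_i$ rather than mapping into the ambient $\Ff_q^N$ directly.
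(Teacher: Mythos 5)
Your proposal is correct and matches the paper's own argument, which consists precisely of the block-wise computation $\wtH(\rho(x)) = \sum_i \wtH(x^{(i)}\mathds{1}_{t_i}) = \sum_{i:\, x^{(i)}\neq 0} t_i = \wt_{\operatorname{free},V}(x)$ displayed just before the lemma. Your added remarks on linearity, on injectivity following from weight preservation, and on why no triangle-inequality argument is needed are all accurate but not essential.
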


\begin{corollary}\label{cor: existence hamming subspace}
The tuple $(\varphi, \Ff_q^N,\rho,\Ff_q^{\text{}^{\scalebox{0.75}{$\sum t_i$}}})$, with $N = 
 \frac{q^n-1}{q-1}$ and $\varphi$,$\rho$ defined by \eqref{eq: varphi 1},\eqref{eq: rho 1}, is a Hamming metric subspace with $(V,\wt_V)$ as quotient:
\[
\begin{tikzcd}
& (\Ff_q^{\text{}^{\scalebox{0.75}{$\sum t_i$}}}, \wtH)    \\
\text{}\qquad(\Ff_q^N,  \wt_{\operatorname{free},V}) \arrow[ru, "\rho", hook] \arrow[rd, "\varphi"', two heads] &                 \\
    & {\scalebox{1.3}{$\substack{(V,\wt_V) \\ = (V,\wt_{\operatorname{quot},\varphi}). }$}} 
\end{tikzcd}
\]
\end{corollary}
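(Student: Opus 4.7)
The plan is to verify each of the five items in the definition of a Hamming metric subspace with $(V, \wt_V)$ as quotient, then note that the two preceding lemmas give exactly the two non-trivial properties (the embedding property of $\rho$ and the quotient property of $\varphi$).

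First, I would set $r := \sum_{i=1}^N t_i$, $s := N$, and check the dimension bounds $r \geq s \geq n$. Since every representative $v_i$ is nonzero, positive definiteness together with integrality of $\wt_V$ gives $t_i = \wt_V(v_i) \geq 1$, so $r \geq N$. The bound $N = (q^n-1)/(q-1) \geq n$ follows from $q \geq 2$ and $n \geq 1$ (trivial induction on $n$).

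Next, I would identify the subspace $C \subseteq \Ff_q^r$ required by the definition. Since $\rho$ is injective and linear (from the explicit block description of $M_\rho$), its image $C := \rho(\Ff_q^N)$ is an $N$-dimensional subspace of $\Ff_q^r$. By the lemma immediately preceding the corollary, $\rho: (\Ff_q^N, \wt_{\operatorname{free},V}) \to (\Ff_q^r, \wtH)$ is an embedding, so the restricted Hamming weight $\wtH|_C$ coincides with $\wt_{\operatorname{free},V}$ under the identification $\rho: \Ff_q^N \xrightarrow{\sim} C$. Under this identification the inclusion map is $\rho$ itself, matching item (4) of the definition.

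Finally, $\varphi: \Ff_q^N \twoheadrightarrow V$ as defined in \eqref{eq: varphi 1} is surjective, since for every nonzero $v \in V$ there is some $i$ and $\lambda \in \Ff_q\setminus\{0\}$ with $v = \lambda v_i = \varphi(\lambda e_i)$, and $\varphi(0) = 0$. Transporting $\varphi$ along the isomorphism $\rho$ produces a surjective map $C \twoheadrightarrow V$ as required by item (5). The key equality $\wt_V = \wt_{\operatorname{quot},\varphi}$ is exactly Lemma \ref{lemma: quotient of Hamming subspace}, which completes the verification.

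There is no genuine obstacle here; the work consists entirely of matching the components constructed in \eqref{eq: varphi 1} and \eqref{eq: rho 1} to the bookkeeping in the definition of $\HSubspaceQuotient$, with the two substantive statements (embedding and quotient) already packaged as the two preceding lemmas. The only mild subtlety is being careful to distinguish the abstract $C \subseteq \Ff_q^r$ demanded by the definition from the concrete $\Ff_q^N$ used in the construction, and resolving this via the identification induced by $\rho$.
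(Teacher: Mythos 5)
Your proposal is correct and follows the same route as the paper: the paper states this corollary without further proof precisely because it is the immediate combination of Lemma \ref{lemma: quotient of Hamming subspace} (giving $\wt_V = \wt_{\operatorname{quot},\varphi}$) and the preceding lemma (giving that $\rho$ is an embedding into $(\Ff_q^{\sum t_i},\wtH)$). Your additional bookkeeping --- checking $r\ge s\ge n$ via $t_i\ge 1$, surjectivity of $\varphi$, and the identification of $\Ff_q^N$ with its image $C=\rho(\Ff_q^N)$ --- is exactly the implicit content the paper leaves to the reader.
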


%

\subsection{Bijection between Hamming subspaces and projective metric spaces}

Next, we demonstrate the natural bijection between  equivalence classes of Hammming metric subspaces with $(V,\wt_V)$ as quotient, and equivalence classes of projective metric subspaces with $(V,\wt_V)$ as subspace.
\begin{lemma}\label{lem: contraction composed embedding}
    Let \(\rho: (X,\wt_X) \to (Y,\wt_Y)\) be an embedding and \(\proj: (Y,\wt_Y) \to (Z,\wt_{\operatorname{quot},\proj})\) be a surjective linear map such that \(\ker(\proj) \subset \operatorname{Im}(\rho)\). Then, the quotient weight satisfies:
    \[
    \Qwt{\proj \circ \rho}|_{\proj(\rho(X))} = \Qwt{\proj}|_{\proj(\rho(X))}.
    \]
\end{lemma}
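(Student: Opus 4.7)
The plan is to unfold both quotient weight definitions and show that the two minima are in fact taken over sets related by the isometry $\rho$. Fix $z \in \psi(\rho(X))$. By definition,
\[
\Qwt{\psi \circ \rho}(z) = \min\{\wt_X(w) : w \in X,\ \psi(\rho(w)) = z\},
\qquad
\Qwt{\psi}(z) = \min\{\wt_Y(y) : y \in \psi^{-1}(z)\}.
\]
Since $\rho$ is an embedding (hence injective and weight-preserving), the first minimum equals $\min\{\wt_Y(y) : y \in \rho(X) \cap \psi^{-1}(z)\}$. So the task reduces to comparing this with the minimum taken over all of $\psi^{-1}(z)$.

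The inequality $\Qwt{\psi}(z) \leq \Qwt{\psi \circ \rho}(z)$ is immediate: the set over which the first minimum is taken is a subset of the second. The main (and really only) step is the reverse inequality, and this is where the hypothesis $\ker(\psi) \subseteq \img(\rho)$ enters. I would prove the stronger statement that $\psi^{-1}(z) \subseteq \img(\rho)$. To see this, pick any $x_0 \in X$ with $\psi(\rho(x_0)) = z$ (possible since $z \in \psi(\rho(X))$) and any $y \in \psi^{-1}(z)$. Then $y - \rho(x_0) \in \ker(\psi) \subseteq \img(\rho)$, so there exists $x_1 \in X$ with $\rho(x_1) = y - \rho(x_0)$, giving $y = \rho(x_0 + x_1) \in \img(\rho)$.

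Once this containment is established, $\rho(X) \cap \psi^{-1}(z) = \psi^{-1}(z)$, and the two minima coincide, proving $\Qwt{\psi \circ \rho}|_{\psi(\rho(X))} = \Qwt{\psi}|_{\psi(\rho(X))}$. I do not anticipate any real obstacle: the result is essentially a bookkeeping exercise once one notices the correct use of $\ker(\psi) \subseteq \img(\rho)$ to trap all fibres of $\psi$ above $\psi(\rho(X))$ inside $\img(\rho)$. One might optionally remark that the argument really shows $\psi^{-1}(\psi(\rho(X))) = \img(\rho) + \ker(\psi) = \img(\rho)$, which is the clean conceptual reason behind the identity.
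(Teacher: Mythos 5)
Your proposal is correct and follows essentially the same route as the paper: the paper phrases the key step as $\rho(\ker(\psi\circ\rho))=\ker(\psi)$ so that the fibre $\rho(x)+\ker(\psi)$ equals $\rho(x+\ker(\psi\circ\rho))$, while you phrase it as $\psi^{-1}(z)\subseteq\operatorname{Im}(\rho)$; these are the same observation, and in both cases the weight-preserving injectivity of $\rho$ then identifies the two minima.
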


\begin{proof}
Since \(\ker(\proj) \subset \operatorname{Im}(\rho)\), it follows that 
\(
\rho(\ker(\proj \circ \rho)) = \ker(\proj).
\)
Let \( z \in \proj(\rho(X)) \), and let \( x \in X \) be such that \( z = \proj(\rho(x)) \). Then, by the definition of the quotient weight:
\[
\wt_{\proj}(z) = \min_{y \in \rho(x) + \ker(\proj)} \wt_Y(y) =
 \min_{x' \in x + \ker(\proj \circ \rho)} \wt_X(x') 
= \wt_{\proj \circ \rho}(z) 
\]
which completes the proof.
\end{proof}
The following simple lemma implies that the restriction of a quotient weight is a quotient weight.
\begin{lemma} \label{lem: restriction of quotient is quotient}
    Let \(\varphi:X \to W\) be a surjective linear map and consider the induced quotient weight \(\Qwt{\varphi}\) on $W$. If \(\HSubspace \subset X\) is a linear subspace such that \(\ker(\varphi) \subset \HSubspace\), and \(V := \varphi(C)\), then \(\Qwt{\varphi}|_V= \Qwt{\varphi|_{\HSubspace}}\).
\end{lemma}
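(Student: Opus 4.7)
The plan is to reduce the desired equality of weights to an equality of the sets over which the defining minima are taken. By definition, for $v \in V$ we have $\Qwt{\varphi}|_V(v) = \min\{\wt_X(x) : x \in \varphi^{-1}(v)\}$ and $\Qwt{\varphi|_{\HSubspace}}(v) = \min\{\wt_X(x) : x \in \HSubspace,\; \varphi(x) = v\}$. If I can show that the preimage $\varphi^{-1}(v)$ already sits inside $\HSubspace$, then the two minima are literally taken over the same set and the conclusion is immediate.

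The main step is therefore to establish the set-theoretic inclusion $\varphi^{-1}(v) \subseteq \HSubspace$ for every $v \in V = \varphi(\HSubspace)$. To do this I would fix such a $v$ and pick any $c \in \HSubspace$ with $\varphi(c) = v$, which exists by definition of $V$. Then for an arbitrary $x \in \varphi^{-1}(v)$, linearity gives $\varphi(x - c) = 0$, so $x - c \in \ker(\varphi)$. Using the hypothesis $\ker(\varphi) \subseteq \HSubspace$ together with $c \in \HSubspace$ and the fact that $\HSubspace$ is a linear subspace, one concludes $x = c + (x - c) \in \HSubspace$.

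Once this inclusion is in hand, we have $\varphi^{-1}(v) = \HSubspace \cap \varphi^{-1}(v) = (\varphi|_{\HSubspace})^{-1}(v)$, and taking the minimum of $\wt_X$ over this common set yields $\Qwt{\varphi}|_V(v) = \Qwt{\varphi|_{\HSubspace}}(v)$.

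There is essentially no technical obstacle here: the lemma is a one-line consequence of the hypothesis $\ker(\varphi) \subseteq \HSubspace$, which is precisely strong enough to collapse the two preimages into a single set. The only subtlety worth flagging is making sure not to confuse $\varphi^{-1}(v)$ taken inside $X$ with $(\varphi|_{\HSubspace})^{-1}(v)$ taken inside $\HSubspace$; the content of the lemma is exactly that these coincide under the given assumption.
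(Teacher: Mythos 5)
Your proof is correct, and it is exactly the argument the authors intend: the paper states this as a ``simple lemma'' and omits the proof entirely, so the key observation that $\ker(\varphi) \subseteq \HSubspace$ forces $\varphi^{-1}(v) \subseteq \HSubspace$ for every $v \in \varphi(\HSubspace)$, making the two minima range over the same set, is precisely the intended one-line justification. No gaps.
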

\begin{theorem}\label{thm: bijection Hamm subspace proj metric}
    Let \((V, \wt_V)\) be an \(n\)-dimensional vector space over \(\mathbb{F}_q\) equipped with a \textit{finitely-valued}, scale-invariant weight \(\wt_V\). Then, there exists a natural bijection between:
    \[
    \left\{\substack{\text{Hamming metric subspaces}\\ \text{with $(V,\wt_V)$ as quotient}}\right\} \,\overset{\cong}{\longleftrightarrow} \,
    \left\{\substack{\text{Projective metric spaces}\\ \text{containing $(V,\wt_V)$ as a subspace}}\right\},
    \]
    where
    \begin{enumerate}
        \item The first set is endowed with the equivalence relation: \( \HSubspaceQuotientNum{1} \cong \HSubspaceQuotientNum{2} \) if \(r_1 =r_2=r\) and there exists a linear Hamming isometry \(L \in \GL(\bF_q^r)\) such that \(L(\HSubspace_1)=\HSubspace_2\) and \(\varphi_2 \circ L|_{\HSubspace_1} = \varphi_1 \). 
    
        \item The second set is endowed with the equivalence relation: \(\PSubspaceNum{1}\cong \PSubspaceNum{2}\) if \(r_1 =r_2=r\) and there exists a linear isometry respect to the projective metrics \(T:W_1 \to W_2\) such that \(T \circ \iota_1 = \iota_2\).
    \end{enumerate}   
\end{theorem}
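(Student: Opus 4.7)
The plan is to construct explicit maps $\Phi, \Psi$ in both directions and verify they are mutually inverse up to the given equivalences. For the forward direction $\Phi$, given a Hamming metric subspace $\HSubspaceQuotient$, I set $W := \Ff_q^r / \ker(\varphi)$ (recall $\ker(\varphi) \subseteq C \subseteq \Ff_q^r$), let $\psi: \Ff_q^r \twoheadrightarrow W$ be the quotient map, and equip $W$ with $\wt_{\operatorname{quot},\psi}$. By Corollary \ref{cor: reduction to parent functions}, this is a projective metric. Since $\psi|_C : C \twoheadrightarrow \psi(C)$ has the same kernel as $\varphi$, the first isomorphism theorem yields a unique linear bijection $\iota : V \to \psi(C) \subseteq W$ with $\iota \circ \varphi = \psi|_C$.

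To verify $\iota$ is an isometric embedding, I combine two earlier results. Lemma \ref{lem: contraction composed embedding}, applied with $X = C$, $Y = \Ff_q^r$, $Z = W$ and embedding $\rho$, gives $\wt_{\operatorname{quot},\psi \circ \rho}|_{\psi(C)} = \wt_{\operatorname{quot},\psi}|_{\psi(C)}$, since $\ker(\psi) = \ker(\varphi) \subseteq C = \operatorname{Img}(\rho)$. Since $\psi \circ \rho = \psi|_C$ has kernel $\ker(\varphi)$, the Isometry Property (part 3 of Theorem \ref{thm: universal property}) applied to the surjections $\varphi$ and $\psi|_C$ identifies $(V,\wt_{\operatorname{quot},\varphi}) = (V,\wt_V)$ with $(\psi(C), \wt_{\operatorname{quot},\psi|_C})$ via $\iota$. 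Chaining these gives $\wt_V(v) = \wt_W(\iota(v))$, as needed.

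For the reverse direction $\Psi$, given $\PSubspace$, set $C := \psi^{-1}(\iota(V)) \subseteq \Ff_q^r$ endowed with $\wtH|_C$, with inclusion $\rho$, and define $\varphi := \iota^{-1} \circ \psi|_C : C \twoheadrightarrow V$. By Lemma \ref{lem: restriction of quotient is quotient} (applied to $\psi$ with subspace $C$, noting $\ker(\psi) \subseteq C$), $\wt_{\operatorname{quot},\psi}|_{\iota(V)} = \wt_{\operatorname{quot},\psi|_C}$. Transporting along $\iota$ gives $\wt_{\operatorname{quot},\varphi}(v) = \wt_{\operatorname{quot},\psi}(\iota(v)) = \wt_V(v)$, confirming that $\HSubspaceQuotient$ is valid Hamming-subspace data.

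It remains to show $\Phi$ and $\Psi$ are mutually inverse and respect the stated equivalences. For $\Psi \circ \Phi$, starting from $\HSubspaceQuotient$ and applying $\Phi$, the reconstructed $C' = \psi^{-1}(\iota(V)) = \psi^{-1}(\psi(C))$ equals $C + \ker(\psi) = C$ because $\ker(\psi) = \ker(\varphi) \subseteq C$; the reconstructed map equals $\iota^{-1} \circ \psi|_C = \varphi$ by definition of $\iota$, so we recover the original data on the nose. For $\Phi \circ \Psi$, starting from $\PSubspace$, the reconstructed quotient is $\Ff_q^r/\ker(\varphi) = \Ff_q^r/\ker(\psi)$, and $\psi$ factors uniquely through this quotient as an isomorphism $T : \Ff_q^r/\ker(\psi) \to W$, which is a projective isometry by the Isometry Property and satisfies $T \circ \iota' = \iota$. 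Naturality of the equivalences is routine: a Hamming isometry $L$ of $\Ff_q^r$ with $L(C_1) = C_2$ and $\varphi_2 \circ L|_{C_1} = \varphi_1$ descends (using $L(\ker\varphi_1) = \ker\varphi_2$) to a projective isometry $\bar L : W_1 \to W_2$ with $\bar L \circ \iota_1 = \iota_2$, and conversely any such $\bar L$ lifts to such an $L$ on $\Ff_q^r$ via compatibility with $\psi_i$. The main obstacle is bookkeeping the identifications in $\Phi \circ \Psi$, but the universal and isometry properties in Theorem \ref{thm: universal property} handle all the compatibility automatically.
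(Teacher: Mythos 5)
Your proposal is correct and follows essentially the same route as the paper: form $W = \Ff_q^r/\ker(\varphi)$, use Corollary \ref{cor: reduction to parent functions} to see it is projective, obtain $\iota$ from the universal/isometry properties of quotient weights together with Lemmas \ref{lem: contraction composed embedding} and \ref{lem: restriction of quotient is quotient}, and recover the Hamming data from a projective embedding via $C=\psi^{-1}(\iota(V))$ and $\varphi=\iota^{-1}\circ\psi|_C$. The one step you dismiss as ``routine'' --- lifting a projective isometry $T$ with $T\circ\iota_1=\iota_2$ back to a Hamming isometry $L$ of $\Ff_q^r$ compatible with the $\psi_i$ --- is exactly where the paper invokes Theorem \ref{thm: isometry isomorphism}, so you should cite that result rather than treat it as immediate.
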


\begin{proof}
    Let \(\HSubspaceQuotient\) be a Hamming metric subspace with \((V, \wt_V)\) as its quotient. Define the quotient map
    \[
    \proj: \mathbb{F}_q^r \twoheadrightarrow W \coloneqq \faktor{\mathbb{F}_q^r}{\rho(\ker (\varphi))}
    \]
    where \(\varphi: \HSubspace \twoheadrightarrow V\) satisfies \(\wt_V = \wt_{\operatorname{quot},\varphi}\), and \(\rho: \HSubspace \hookrightarrow \mathbb{F}_q^r\) is the natural inclusion (an embedding). Since \((W, \wt_{\operatorname{quot}, \proj})\) is a quotient weight, it is also a projective weight by Corollary~\ref{cor: reduction to parent functions}. Since \(\ker(\varphi) = \rho(\ker(\varphi)) = \ker(\proj \circ \rho)\), the Universal Property (Theorem~\ref{thm: universal property}) guarantees the existence of a unique contraction
    \[
    \iota: (V, \Qwt{V}) \to (W, \Qwt{\proj \circ \rho})
    \]
    such that \( \iota \circ \varphi = \proj \circ \rho \) and furthermore \(\Qwt{\iota} = \Qwt{\psi \circ \rho}\) on ${\iota(V)}$, and \( \ker(\iota) = \varphi(\ker{\proj \circ \rho}) = 0 \), implying that \( \iota \) is injective.
\[
\begin{tikzcd}
& (\Ff_q^r, \wtH)   \arrow[rd, "\psi", two heads] &\\
(C,  \wtH|_{C}) \arrow[ru, "\rho", hook] \arrow[rd, "\varphi"', two heads]  \arrow[rr, "\psi \circ \rho \ = \ \iota \circ \varphi "]  &    & (W,\wt_{\operatorname{quot},\psi})           \\
    &(V,\wt_{\operatorname{quot},\varphi})  \arrow[ru, "\exists ! \,\iota"', hook, dashed]&
\end{tikzcd}
\]

    Since \(\Qwt{\iota} = \Qwt{\psi \circ \rho}\) and by Lemma~\ref{lem: contraction composed embedding}, since \(\ker(\proj) = \rho(\ker(\varphi))\), we have:
    \[
    \wt_{\operatorname{quot}, \iota }|_{\iota(V)} = \wt_{\operatorname{quot},\proj \circ \rho }|_{\iota(V)} = \wt_{\operatorname{quot},\proj }|_{\iota(V)}.
    \]
    
 Thus, \( \iota: (V, \wt_V) \to (W, \Qwt{\proj}) \) is an embedding in a projective metric space.

    We now define the mapping:
    \[
    \Psi:  \left\{\substack{\text{Hamming metric subspaces}\\ \text{with $(V,\wt_V)$ as quotient}}\right\} \, \to
    \left\{\substack{\text{Projective metric spaces}\\ \text{containing $(V,\wt_V)$ as a subspace}}\right\}
    \]
    by setting \( \Psi(\HSubspaceQuotient)) := \PSubspace \).  Note that $\dim(W) = r - \dim(\ker(\varphi)) = r-(\dim(C) - \dim(V))) = r - s + n$.
    We will now show that the map 
    \[
    \overline{\Psi}(\eqclass{\HSubspaceQuotient})\coloneqq\eqclass{\Psi(\HSubspaceQuotient)}
    \]
    on the equivalence classes $\eqclass{ \, \cdot \, }$ is well-defined and a bijection. 

\paragraph{\(\overline{\Psi}\) is well-defined:}
We must verify that the image of a Hamming metric subspace \(\HSubspaceQuotient\) under \(\Psi\) is well-defined up to linear isomorphism of projective metric spaces. 

Suppose two Hamming metric subspaces \(\HSubspaceQuotientNum{1}\) and \(\HSubspaceQuotientNum{2}\) are equivalent under a Hamming isometry \(L \in \GL(\mathbb{F}_q^r)\) (with \(r = r_1 = r_2\)) such that \(L(\HSubspace_1) = \HSubspace_2\) and \(\varphi \circ L|_{\cC_1} = \varphi_2\). Let \(\PSubspaceNum{1}\) and \(\PSubspaceNum{2}\) be the respective images of \(\HSubspaceQuotientNum{1}\) and \(\HSubspaceQuotientNum{2}\) under \(\Psi\).

Since \(L(\HSubspace_1) = \HSubspace_2\), from the definition of \(\psi_i\), from Theorem \ref{thm: isometry isomorphism}, there exist a unique projective isometry \(T \in \Fixisom{\iota_1(V)}{\iota_2(V)}{W_1}{W_2}\) such that:
\[
T \circ \psi_1 = \psi_2 \circ L,
\]
 which implies \(T \circ \iota_1 = \iota_2\).

Hence, \(\Psi(\HSubspaceQuotientNum{1})\) and \(\Psi(\HSubspaceQuotientNum{2})\) are isomorphic projective metric spaces, showing that \(\Psi\) is well-defined on equivalence classes.


\paragraph{Inverse of \(\overline{\Psi}\):}
We now describe the inverse map \(\Phi\) of \(\Psi\). Given an embedding \(\mathcal{P} = \PSubspace\) of \((V, \wt_V)\) into a projective metric space, we define:
\[
\Phi(\mathcal{P}) := \mathcal{H} := \HSubspaceQuotient,
\]
where:
\begin{itemize}
    \item \(\mathbb{F}_q^r := \psi^{-1}(W)\),
    \item \(\HSubspace := \psi^{-1}(\iota(V))\), 
    \item \(\rho : \HSubspace \hookrightarrow \mathbb{F}_q^r\) the natural inclusion map, and
    \item \(\varphi := \iota^{-1} \circ  \psi|_{\HSubspace}\).
\end{itemize}
\[
\begin{tikzcd}
& (\Ff_q^r, \wtH)   \arrow[rd, "\psi", two heads] &\\
(C,  \wtH|_{C}) \arrow[ru, "\rho", hook] \arrow[rd, "\varphi \,=\, \iota^{-1} \circ \,   \psi|_{C}"', two heads, dashed]  \arrow[rr, "\psi \circ \rho \, = \, \psi|_{\HSubspace} \,=\,  \iota \circ \varphi "]  &    & (W,\wt_{\operatorname{quot},\psi})           \\
    &(V,\wt_{\operatorname{quot},\varphi})  \arrow[ru, "\iota"', hook]&
\end{tikzcd}
\]

Using the same arguments as in the proof of \(\Psi\)'s well-definedness and surjectivity, it follows that:
\[
\overline{\Psi} \circ \overline{\Phi} = \overline{\operatorname{Id}} \quad \text{and} \quad \overline{\Phi} \circ \overline{\Psi} = \overline{\operatorname{Id}},
\]
showing that \(\Phi\) is indeed a two-sided inverse to \(\Psi\). It remains to show that \(\Phi\) is well-defined on equivalence classes of projective metric embeddings.

Let \(\PSubspaceNum{1}\) and \(\PSubspaceNum{2}\) be two projective metric embeddings of \((V, \wt_V)\) that are equivalent under a linear isometry \(T: W_1 \to W_2\) satisfying \(T \circ \iota_1 = \iota_2\). We show that \(\HSubspaceQuotientNum{1}= \Phi\left(\PSubspaceNum{1}\right)\) and \(\HSubspaceQuotientNum{2}= \Phi\left(\PSubspaceNum{2}\right)\) are equivalent.
Since \(T: W_1 \to W_2\) is a projective isometry such that \(T(\iota_1(V))=\iota_2(V)\), by Theorem \ref{thm: isometry isomorphism}, there exists a unique Hamming Isometry \(L: \bF_q^r \to \bF_q^r\) such that \(L(\cC_1)=L(\psi_1^{-1}(\iota_1(V)))=\psi_2^{-1}(\iota_2(V))= \cC_2\) and
\begin{equation}
    \label{eq: ToP}
T \circ \psi_1 = \psi_2 \circ L.
\end{equation}

From the commutativity of the diagram

\[\begin{tikzcd}[ampersand replacement=\&]
	{\HSubspace_1} \&\& V \&\& V \&\& {\HSubspace_1} \\
	\& {} \&\& {} \&\& {} \\
	{\bF_q^r} \&\& {W_1} \&\& {W_2} \&\& {\bF_q^r}
	\arrow["{\varphi_1}", from=1-1, to=1-3]
	\arrow["{\rho_1}"', from=1-1, to=3-1]
	\arrow["{Id}", from=1-3, to=1-5]
	\arrow["{\iota_1}"{description}, from=1-3, to=3-3]
	\arrow["{\iota_2}"{description}, from=1-5, to=3-5]
	\arrow["{\varphi_2}"', from=1-7, to=1-5]
	\arrow["{\rho_2}", from=1-7, to=3-7]
	\arrow["{\psi_1}"', from=3-1, to=3-3]
	\arrow["L"', curve={height=30pt}, from=3-1, to=3-7]
	\arrow["T"', from=3-3, to=3-5]
	\arrow["{\psi_2}", from=3-7, to=3-5]
\end{tikzcd}\]
it follows that:
\[
\varphi_2 \circ L|_{\HSubspace_1} = \varphi_1.
\]

This shows that \(\Phi(\PSubspaceNum{1})\) and \(\Phi(\PSubspaceNum{2})\) are Hamming equivalent, hence \(\Phi\) is well-defined on equivalence classes.

We conclude that \(\Phi\) is a well-defined inverse to \(\Psi\), completing the bijection.

\end{proof}

Finally since we have seen in Lemma \ref{lemma: quotient of Hamming subspace} that there exists an Hamming metric subspace with \((V, \wt_V)\) as a quotient it follows that:
\scaleinvariantembedding*

\newpage
\subsection{The embedding frontier}
We can now study the fundamental question:\\
\\
\textit{\mbox{``Given a scale-invariant weighted space ${(V,\wt_V)}$ of dimension $n$, what are the  possible values $(a,b)$}\\ such that ${(V,\wt_V)}$ is embeddable into  a projective  metric space $(W, \wt_\F)$ of dimension $n+a$ with  $\wt_\F$ equal to the quotient weight induced by a surjective map $\psi: (\Ff_q^{n+a+b},\wtH) \twoheadrightarrow W$?"}

\[
\begin{tikzcd}
{(\Ff_q^{n+a+b},\wtH)} \arrow[rd, "\psi", two heads]                         &             \\
& {(W,\wt_{\operatorname{quot},\psi}) \cong \Ff_q^{n+a}} \\
{\scalebox{1.0}{${(V,\wt_V)}$}} \arrow[ru, "\iota"', hook] &        
\end{tikzcd}
\]
If a pair $(a,b) \in \Nn^2$ satisfies these conditions, then $(V,\wt_V)$ is \myfont{$\bm{(a,b)}$-Hamming-embeddable} or \myfont{$\bm{(a,b)}$-embeddable} for short. 
Note that if $(V,\wt_V)$ is \myfont{$\bm{(a,b)}$-embeddable}, then it is also $(a+1,b)$-embeddable and $(a,b+1)$-embeddable by adding coordinates.
Hence, it suffices to find pairs minimal in this regard: we call a pair $(a,b) \in \Nn^2$  \myfont{(Pareto) optimal} for $(V,\wt_V)$ if
\begin{itemize}
    \item $(V,\wt_V)$ is $(a,b)$-embeddable;
    \item $(V,\wt_V)$ is not $(a',b')$-embeddable for any $(a',b')$ with  $a' < a$ and $b' \leq b$, or with  $a' \leq a$ and $b' < b$.
\end{itemize}

The finite set of all Pareto optimal pairs $(a,b)$ for $(V,\wt_V)$  is called the \textbf{(Pareto) embedding frontier} of $(V,\wt_V)$, and is an interesting geometric invariant of the weighted space. 

Note that if a pair $(a,b)$ is optimal, then the corresponding $(n+a)$-dimensional projective metric space $(W,\wt_\F)$ has a spanning family $\F$ of size $n+a+b$. Namely, if it were smaller, then we could construct a different map $\psi' : \Ff_q^{b'} \twoheadrightarrow W$ with $b' = |\F| - (n+a) < b$, contradicting optimality.

\begin{example}
Consider a scale-translation-invariant space $(V,\wt_V)$ over $\Ff_q$ of dimension $n$. Let $\{v_1,v_2,\ldots,v_N\}\subset V$ be a set of representatives of all 1-dimensional spaces in $\Gr_1(V)$ of size $N = \frac{q^n-1}{q-1}$, and let $t_i := \wt_V(v_i)$. Then Corollary \ref{cor: existence hamming subspace}  and Theorem \ref{thm: bijection Hamm subspace proj metric} show that $(V,\wt_V)$ is $\left(\sum_{i=1}^N (t_i - 1), \ N-n\right)$-embeddable, but often this is not Pareto optimal.

\end{example}

\begin{example}
Consider a scale-invariant weighted space $(V,\wt_V)$ over $\Ff_q$ of dimension $n$.
\begin{itemize}
    \item If the embedding frontier contains a pair of the form $(a,0)$ for some $a \in \Nn$, then $(V,\wt_V)$ is isomorphic to a subspace $(C,\wtH|_C)$ of the Hamming metric space $(\Ff_q^{n+a},\wtH)$.
    \item If the embedding frontier contains a pair of the form $(0,b)$ for some $b \in \Nn$, then $(V,\wt_V)$ is a projective metric space isomorphic to a quotient of the Hamming metric space $(\Ff_q^{n+b},\wtH)$, under a surjective map  $\psi: (\Ff_q^{n+b},\wtH) \twoheadrightarrow (W,\Qwt{\psi}) \cong  (V,\wt_V)$.
\end{itemize}

\end{example}

\newpage
\begin{example}
    Consider the vector space $V = \Ff_2^2 = \{0,v_1,v_2,v_3\}$ endowed with a weight $\wt_V$, and let $t_i := \wt_V(v_i)$ for $i = 1,2,3$. Let $D :=  \left\lceil \frac{t_1+t_2+t_3}{2} \right\rceil$. We can now consider the injective linear map
     \begin{align*}
            \iota : \Ff_2^2 \ &  \to \ \Ff_2^{D}\\
            v_2 \ &\mapsto \ (\overbrace{1 \, 1 \, 1 \cdots\cdots 1 \, 1 \, 1 }^{t_2} \overbrace{0 \, 0 \cdots 0 \, 0 }^{D-t_2}\hspace{-0mm})\\
            v_3 \ &\mapsto \    (\hspace{-0mm}\overbrace{0 \, 0 \cdots 0 \, 0 }^{D-t_3}  \overbrace{1 \, 1 \, 1 \cdots \cdots 1\, 1\, 1 }^{t_3}) \\
            v_1 \ &\mapsto \ (\hspace{-0mm}\overbrace{1 \, 1 \cdots 1 \, 1 }^{D-t_3}  \hspace{-0mm} 0 \cdots 0 \hspace{-0mm}\overbrace{1 \, 1 \cdots 1 \, 1 }^{D-t_2}\hspace{-0mm}).
        \end{align*}
    Note that the triangle inequality $t_1 \leq t_2 + t_3$ always ensures that $(D-t_3)+(D-t_2) \leq D$. The images under the map above have the following Hamming weights:
    \[
    \wtH(\iota(v_2)) = t_2, \qquad \wtH(\iota(v_3)) = t_3, \qquad \wtH(\iota(v_1)) = \begin{cases}
        t_1 & \text{ if } t_1+t_2+t_3 \text{ is even} \\
         t_1+1 & \text{ if } t_1+t_2+t_3 \text{ is odd} 
    \end{cases}.
    \]
    \begin{itemize}
        \item Case 1: If $t_1+t_2+t_3$ is even, then by endowing the space $\Ff_2^D$ with the Hamming weight $\wtH$, the map $\iota$ is an embedding. So $(\Ff_2^2, \wt_V)$ is $(D-2,\, 0)$-embeddable, and it is easy to check that $(D-2,\, 0)$ is a Pareto optimal pair.

        \item Case 2: If $t_1+t_2+t_3$ is odd, then we endow the space $\Ff_2^D$ with the projective weight $\wt_\F$, with spanning family $\F = \{e_1, e_2,\ldots,e_D, (100\cdots001)\}$ and $e_j$ the $j$-th standard basis vector of $\Ff_2^D$.
        Now $\wt_\F(\iota(v_i)) = t_i$ for $i=1,2,3$, so the map $\iota$ is an embedding. This means $(\Ff_2^2, \wt_V)$ is $(D-2,\, 1)$-embeddable, and again it is easy to check that this is a Pareto optimal pair.
    \end{itemize}

\noindent
In conclusion, $(\Ff_2^2, \wt_V)$ is $(a,b)$-embeddable with $a \in \Nn$ and $b \in \{0,1\}$ given by
\[
t_1 + t_2 + t_3 - 4  = 2a - b
\]
with this $(a,b)$ being Pareto optimal for $(\Ff_2^2, \wt_V)$.
\end{example}

\newpage

\section{Sphere sizes and matroids}\label{sec: Sphere sizes}

The study of sphere sizes in projective metrics is fundamental to understanding error correction capabilities and their relationships with classical coding bounds. Sphere sizes play a crucial role in determining the packing efficiency of a code and its ability to correct errors. 
A first relation between sphere sizes in projective metric and coset weight distributions of their parent codes has been found in \cite{gabidulin1998metrics}. 

One of the most well-known theoretical bounds in coding theory is the \emph{sphere-packing bound} \cite{shannon} or \emph{Hamming bound} (\cite{Hamming},\cite{MacWilliams}) providing a limit on the number of codewords a code can contain while still maintaining a certain minimum distance. Codes attaining the Hamming bound are called \emph{perfect codes}. In subsection \ref{subsectiond: perfect codes}, we discuss perfect codes in projective metrics, referencing the foundational work of Gabidulin and others on such constructions.

In this section, we give some basic results on sphere sizes and relate sphere sizes to the coset weight distribution of \emph{parent codes}: for any code \(\pc\) there exists a projective weight whose sequence of sphere sizes is equal to the coset weight distribution of \(\pc\) and viceversa.  Additionally in subsection \ref{subsec: matroids}, we show how a \emph{matroid} structure defined from the spanning family \(\cF\) univocally describes sphere sizes of the projective metric \(\wt_{\cF}\).

\begin{definition}
Given a projective weight \(\wt_\cF\) on \(V\), an integer \(t \in \bN\) and vector \(x \in V\), the \textbf{sphere} and \textbf{ball of} \textbf{radius} \(\bm{t}\) \textbf{centered at} \(\bm{x}\) are  respectively defined as
\begin{align*}
    \SF_t(x) &:= \{x + v \ : \  v \in V \ \text{ and } \ \wt_\cF(v) = t\}\\
    \BF_t(x) &:= \{x + v \ : \  v \in V \ \text{ and } \ \wt_\cF(v) \leq t\}.
\end{align*}
For spheres and balls in the Hamming metric in particular, we shall write \(\SH_t(x)\) and \(\BH_t(x)\) respectively.
\end{definition}


\subsection{Some observations}\label{subsec: some obs}

The following lemma says that the restriction of the parent function to the ball of radius \(r\) equal to half of the minimum distance of the parent code, behaves like an isometry:

\begin{lemma}\label{lem: parent code injective and weight preserving}
    Let \(\wt_\cF\) be a projective weight on \(V\) with \(\cF = \{\langle f_1 \rangle,\ldots, \langle f_\N \rangle\}\) and \(\varphi : \bF_q^\N \to V\) a parent function with parent code  \(\pc = \ker(\varphi)\). Let \(r := \left \lfloor \frac{\dH(\pc) - 1}{2} \right \rfloor\) and \(x \in V\). Then \(\varphi\) is injective on the ball \(\BH_r(x) \subset \bF_q^\N\), i.e.
    if \(y,z\) are two vectors in \(\BH_r(x) \) such that \(\varphi(y) = \varphi(z)\), then \(y = z\).
    Moreover, if \(y \in \BH_r(0)\), then  \(\wt_\cF(\varphi(y)) =\wtH(y)\).
\end{lemma}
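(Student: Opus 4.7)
The plan is to apply the standard unique-decoding argument: any two elements of $\bF_q^{\N}$ whose difference lies in $\pc$ and which are both close to a common center must in fact coincide, because a nonzero codeword of $\pc$ has Hamming weight at least $\dH(\pc) > 2r$.

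For the injectivity claim, I would suppose $y,z \in \BH_r(x)$ satisfy $\varphi(y) = \varphi(z)$. Then $y - z \in \ker(\varphi) = \pc$, and by the triangle inequality for $\wtH$,
\[
\wtH(y-z) \;\leq\; \wtH(y-x) + \wtH(x-z) \;\leq\; 2r \;\leq\; \dH(\pc) - 1.
\]
Since every nonzero element of $\pc$ has Hamming weight at least $\dH(\pc)$, this forces $y - z = 0$. Note that this argument works whether we interpret $x$ as an element of $\bF_q^{\N}$ directly or as a preimage under $\varphi$; only the balls around $x$ in $\bF_q^{\N}$ matter.

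For the weight-preservation claim, take $y \in \BH_r(0)$. One inequality is immediate: by Observation on quotient properties (item~4), $\varphi$ is a contraction from $(\bF_q^{\N},\wtH)$ to $(V,\wt_{\cF}) = (V,\Qwt{\varphi})$, so $\wt_\cF(\varphi(y)) \leq \wtH(y)$. For the reverse inequality, since $\wt_\cF = \Qwt{\varphi}$ by Proposition~\ref{prop: proj iff quotient 1}, the minimum in the definition of the quotient weight is attained: there exists $z \in \varphi^{-1}(\varphi(y))$ with $\wtH(z) = \wt_\cF(\varphi(y)) \leq \wtH(y) \leq r$. Hence $z \in \BH_r(0)$, and $y \in \BH_r(0)$ as well, so the injectivity proved above (with center $x = 0$) gives $y = z$ and therefore $\wtH(y) = \wt_\cF(\varphi(y))$.

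There is no real obstacle: the lemma is essentially a restatement of the elementary fact that codes of minimum distance $d$ can uniquely decode within radius $\lfloor (d-1)/2 \rfloor$, combined with the already established identification $\wt_\cF = \Qwt{\varphi}$ from Proposition~\ref{prop: proj iff quotient 1}. The only care needed is to ensure that the estimate $2r \leq \dH(\pc) - 1$ is strict enough to exclude a nonzero codeword, which follows directly from the definition $r = \lfloor(\dH(\pc)-1)/2\rfloor$.
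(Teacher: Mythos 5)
Your proof is correct and follows essentially the same route as the paper: the injectivity part is the standard unique-decoding triangle-inequality argument ($y-z\in\pc$ with $\wtH(y-z)\leq 2r<\dH(\pc)$), and the weight-preservation part combines the contraction inequality $\wt_\cF(\varphi(y))\leq\wtH(y)$ with a minimizing preimage $z$ and the injectivity on $\BH_r(0)$ to force $y=z$. The only difference is cosmetic: you correctly carry out the triangle inequality in the Hamming weight on $\bF_q^\N$, whereas the paper's displayed computation writes $\wt_\cF$ where it clearly means $\wtH$.
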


\begin{proof}
Let  \(y,z \in \BH_r(x) \) such that \(\varphi(y) = \varphi(z)\). Then \(y - z \in \pc\), but \(\wt_\cF(y-z) = \wt_\cF((y-x)-(z-x)) \leq  \wt_\cF(y-x) +  \wt_\cF(z-x) \leq 2r = \dH(\pc) - 1\), so \(y = z\). 

To show the second statement, let \(y \in \BH_r(0)\) and suppose \(y = \sum_{i=1}^\N y_i e_i\).
Then we have \( \varphi(y) =  \sum_{i=1}^\N y_i\varphi(e_i) =\sum_{i=1}^\N y_i f_i\) and thus \(\wt_\cF(\varphi(y)) \leq \wtH(y)\).
 By definition of projective metrics there exists a \(z = (z_1, \ldots, z_\N) \in \bF_q^\N\) such that \( \varphi(y) = \sum_{i=1}^\N z_i f_i = \varphi(z)\) with \(\wtH(z) = \wt_\cF(\varphi(y))\). We obtain \(\wtH(z) = \wt_\cF(\varphi(y)) \leq \wtH(y) \leq r\), so by the first statement it follows that \(y = z\) and thus the inequalities above are in fact equalities.
\end{proof}

As a corollary we get that for small enough radii, the parent functions are bijections between spheres (balls) in the Hamming metric and spheres (balls) in projective metrics:

\begin{proposition}\label{prop: small spheres bijection}
Let \(\wt_\cF\) be a projective weight on \(V\)  and \(\varphi : \bF_q^\N \to V\) a parent function with parent code  \(\pc = \ker(\varphi)\). Let \(r := \left \lfloor \frac{\dH(\pc) - 1}{2} \right \rfloor\) and \(x \in \bF_q^\N\). Then for any \(t \leq r\), the function \(\varphi\) acts as a bijection between the sphere \(\SH_t(x) \subset \bF_q^\N\) and the sphere \(\SF_t(\varphi(x)) \subset V\).
\end{proposition}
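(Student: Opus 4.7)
The approach is to leverage the preceding lemma, which already supplies the two ingredients we need on the ball $\BH_r(x)$: that $\varphi$ is injective there and that $\wt_\cF\circ\varphi$ coincides with $\wtH$ on $\BH_r(0)$. By translation-invariance of both metrics, the proposition splits into (i) showing $\varphi(\SH_t(x))\subseteq\SF_t(\varphi(x))$ with injectivity, and (ii) constructing, for each $w\in\SF_t(\varphi(x))$, a preimage $y\in\SH_t(x)$.

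For (i), let $y\in\SH_t(x)$ and write $z=y-x$, so $\wtH(z)=t\leq r$, i.e.\ $z\in\BH_r(0)$. The lemma gives $\wt_\cF(\varphi(z))=\wtH(z)=t$, and by linearity $\varphi(y)-\varphi(x)=\varphi(z)$, placing $\varphi(y)$ in $\SF_t(\varphi(x))$. Injectivity of $\varphi|_{\SH_t(x)}$ follows immediately from injectivity on the larger ball $\BH_r(x)\supseteq\SH_t(x)$.

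For (ii), fix $w\in\SF_t(\varphi(x))$ and set $v:=w-\varphi(x)$, so $\wt_\cF(v)=t$. By Definition \ref{def: proj weight} there exist distinct projective points $\langle f_{i_1}\rangle,\ldots,\langle f_{i_t}\rangle\in\cF$ and scalars $\alpha_1,\ldots,\alpha_t\in\Ff_q\setminus\{0\}$ with $v=\sum_{j=1}^t\alpha_j f_{i_j}$ (the coefficients must be nonzero by minimality of $t$, otherwise $\wt_\cF(v)<t$). Using the parent-function structure, each $f_{i_j}$ equals $\mu_j^{-1}\varphi(e_{k_j})$ for a unique basis index $k_j$ and scalar $\mu_j\in\Ff_q\setminus\{0\}$; distinctness of the $\langle f_{i_j}\rangle$ forces the $k_j$ to be pairwise distinct. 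Set $z:=\sum_{j=1}^t\alpha_j\mu_j^{-1}e_{k_j}$, so $\wtH(z)=t$ and $\varphi(z)=v$. Then $y:=x+z$ lies in $\SH_t(x)$ and satisfies $\varphi(y)=\varphi(x)+v=w$, giving the required preimage.

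The only subtlety is in step (ii): we must be sure that the minimal representation of $v$ uses exactly $t$ nonzero coefficients at $t$ distinct projective points, so that lifting through $\varphi$ produces a vector $z$ of Hamming weight \emph{exactly} $t$ (not less). This is immediate from the minimality in the definition of $\wt_\cF$, but it is the one place the argument could go wrong if one is careless. Once this is in hand, combining (i) and (ii) with the injectivity from the lemma yields the desired bijection.
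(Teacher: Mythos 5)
Your proof is correct and follows essentially the same route as the paper's: reduce to comparing $\SH_t$ and $\SF_t$ via translation, get the containment $\varphi(\SH_t)\subseteq\SF_t$ and injectivity from Lemma \ref{lem: parent code injective and weight preserving}, and get surjectivity by lifting a minimal length-$t$ representation of an element of $\SF_t(0)$ to a Hamming-weight-$t$ vector. The only difference is that you spell out the lifting step (distinct projective points give distinct basis indices, minimality forces nonzero coefficients) in more detail than the paper, which simply invokes the definition of projective metrics.
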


\begin{proof}
Since \(\SH_t(x) = x + \SH_t(0)\), \ \(\SF_t(\varphi(x)) = \varphi(x) + \SF_t(0)\) and \(\varphi\) linear, we can assume w.l.o.g. that \(x = 0\).

Lemma \ref{lem: parent code injective and weight preserving} implies directly that \(\varphi(\SH_t(0)) \subseteq \SF_t(0)\) and that the restriction \(\varphi |_{\SH_t(0)}\) is injective.
Since by definition of projective metrics for any \(z \in \SF_t(0)\)  there exists a \(y =  \sum_{i=1}^\N y_i e_i \in \bF_q^\N \) such that \( \sum_{i=1}^\N y_i f_i = z\) and \(\wtH(y) = \wt_\cF(z) = t\), the restriction \(\varphi |_{\SH_t(0)}\) is also surjective. 
\end{proof}

\begin{corollary}[\cite{gabidulin1997metrics,gabidulin1998metrics}]
Let $y \in V$. Then for $t \leq  \left\lfloor \frac{\dH(\pc) - 1}{2} \right \rfloor$, the sphere size of radius $t$ is given by
\[
|\SF_{t}(y)| = |\SF_{t}(0)| =  (q-1)^{t}\binom{\N}{t}.
\]
\end{corollary}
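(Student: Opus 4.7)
The plan is to combine translation invariance with the bijection established in Proposition \ref{prop: small spheres bijection}. First, translation invariance of the projective metric gives immediately that $|\SF_t(y)| = |\SF_t(0)|$ for every $y \in V$, so it suffices to compute $|\SF_t(0)|$.

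Next, I would apply Proposition \ref{prop: small spheres bijection} with $x = 0$: since $t \leq \left\lfloor \frac{\dH(\pc)-1}{2}\right\rfloor = r$, the parent function $\varphi$ restricts to a bijection
\[
\varphi|_{\SH_t(0)} : \SH_t(0) \;\overset{\sim}{\longrightarrow}\; \SF_t(0),
\]
so $|\SF_t(0)| = |\SH_t(0)|$. The problem therefore reduces to counting vectors of Hamming weight exactly $t$ in $\Ff_q^{\N}$.

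Finally, I would invoke the standard count for Hamming spheres: a vector $y \in \Ff_q^{\N}$ has Hamming weight $t$ precisely when its support has size $t$ and each coordinate in the support is a nonzero element of $\Ff_q$. There are $\binom{\N}{t}$ choices for the support and $(q-1)^t$ choices for the nonzero entries, yielding
\[
|\SH_t(0)| = (q-1)^t \binom{\N}{t}.
\]
Chaining the equalities gives the claim.

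There is no real obstacle here; the entire argument is a direct application of the previously proven bijection together with translation invariance, and the Hamming sphere count is elementary.
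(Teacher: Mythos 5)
Your proposal is correct and matches the paper's intent exactly: the corollary is stated as an immediate consequence of Proposition \ref{prop: small spheres bijection}, using translation invariance to reduce to $y=0$, the bijection $\varphi|_{\SH_t(0)}:\SH_t(0)\to\SF_t(0)$ for $t\leq r$, and the elementary count $|\SH_t(0)|=(q-1)^t\binom{\N}{t}$. No issues.
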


 We now show how, knowing the ball sizes with the weights \(\wt_{\cF_1}\) and \(\wt_{\cF_2}\), we can obtain the ball sizes for their disjoint union \eqref{def: disjoint union of projective weights}: \(\wt_{\cF} \coloneqq \wt_{\cF_1} \sqcup \wt_{\cF_2}\).

\begin{proposition}
     
    Let \(R_1,R_2\) be the radiuses of the largest non empty spheres with the weights \(w_{\cF_1}\) and \(w_{\cF_2}\)
    that is \(R_i := \max \{r \in \bN \mid S_r^{\cF_i}(0) \neq \emptyset \}\) for i=1,2. Then 
    \begin{equation}
    \label{eq: splitting spheres}
        S_r^{\cF}(0) = \bigsqcup_{\substack{l_1 + l_2 = r \\ l_i \leq R_i} } (S_{l_1}(\cF_1) \oplus S_{l_2}(\cF_2))
    \end{equation}
\end{proposition}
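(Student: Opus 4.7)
The plan is to reduce the statement to a single additivity identity for the weight and then observe that the sphere decomposition is an immediate consequence. Concretely, I will show that for every $v = i_1(v_1) + i_2(v_2) \in V = V_1 \oplus V_2$ one has
\[
\wt_\cF(v) = \wt_{\cF_1}(v_1) + \wt_{\cF_2}(v_2),
\]
where $\cF = i_1(\cF_1) \cup i_2(\cF_2)$ by Definition~\ref{def: disjoint union of projective weights}.

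To prove this additivity I use the definition of projective weight as a minimum over spanning subsets. Every subset $I \subseteq \cF$ splits uniquely as $I = I_1 \sqcup I_2$ with $I_j \subset i_j(\cF_j)$. Since $i_1(V_1) \cap i_2(V_2) = 0$, the containment $v \in \spn{I}$ is equivalent to $i_1(v_1) \in \spn{I_1}$ and $i_2(v_2) \in \spn{I_2}$, which in turn is equivalent to $v_j \in \spn{i_j^{-1}(I_j)}$ in $V_j$ for $j = 1,2$. Taking cardinalities gives $|I| = |I_1| + |I_2|$, and minimising over the two factors independently yields $\wt_\cF(v) \leq \wt_{\cF_1}(v_1) + \wt_{\cF_2}(v_2)$; conversely, any subset $I$ realising $\wt_\cF(v)$ must already be split-minimal in each component, so equality holds. (Convention: $\infty + k = \infty$ handles the case $v_j \notin \spn{\cF_j}$.)

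Given the additivity, the sphere identity drops out. A vector $v = i_1(v_1) + i_2(v_2)$ lies in $S_r^\cF(0)$ iff $\wt_{\cF_1}(v_1) + \wt_{\cF_2}(v_2) = r$. Stratifying by the pair $(l_1,l_2) := (\wt_{\cF_1}(v_1), \wt_{\cF_2}(v_2))$ partitions $S_r^\cF(0)$ into the pieces $i_1(S^{\cF_1}_{l_1}(0)) + i_2(S^{\cF_2}_{l_2}(0))$, which is exactly the set denoted $S_{l_1}(\cF_1) \oplus S_{l_2}(\cF_2)$ in the statement. The constraint $l_i \leq R_i$ simply removes the indices for which one of the factor spheres is empty.

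The only step requiring any care is the split-minimality argument, and even there the key point is just the triviality of the intersection $i_1(V_1) \cap i_2(V_2)$; I do not expect any real obstacle beyond writing out this decomposition cleanly.
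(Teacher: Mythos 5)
Your proof is correct and takes essentially the same approach as the paper: both arguments rest on the additivity $\wt_{\cF}(i_1(v_1)+i_2(v_2)) = \wt_{\cF_1}(v_1)+\wt_{\cF_2}(v_2)$ and the uniqueness of the direct-sum decomposition. If anything, yours is more complete, since the paper merely asserts the additivity identity while you justify it via the splitting $I = I_1 \sqcup I_2$ of spanning subsets.
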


\begin{proof}
We show that the union is disjoint. Let \(v \in (S_{l_1}^{\mathcal{F}_1} \oplus S_{l_2}^{\mathcal{F}_2}) \cap (S_{l'_1}^{\mathcal{F}_1} \oplus S_{l'_2}^{\mathcal{F}_2})\) with \(l_1 \neq l'_1\) and \(l_2 \neq l'_2\). There exist unique \(v_1 \in V_1\), \(v_2 \in V_2\) such that \(v = v_1 + v_2\), thus \(v_1 \in S_{l_1}^{\mathcal{F}_1} \cap S_{l'_1}^{\mathcal{F}_1} = \emptyset\) and \(v_2 \in S_{l_2}^{\mathcal{F}_2} \cap S_{l'_2}^{\mathcal{F}_2} = \emptyset\), which gives a contradiction, thus the union is disjoint. Since for all \(v_1 \in V_1\), \(v_2 \in V_2\) we have \(\wt_{\cF}(v_1+v_2) = w_{\mathcal{F}_1}(v_1) + w_{\mathcal{F}_2}(v_2)\), it follows that \(S_r^{\mathcal{F}}(0) = \bigsqcup_{\substack{l_1 + l_2 = r \\ l_i \leq R_i}} (S_{l_1}^{\mathcal{F}_1} \oplus S_{l_2}^{\mathcal{F}_2})\).

\end{proof}

Thus, given the sphere sizes of $\cF_1$ and $\cF_2$ we can have the sphere sizes of $\cF$. Taking the cardinality of the set in Equation \eqref{eq: splitting spheres}, we obtain the following:
\begin{corollary}
\begin{equation}
    |S_r^{\cF}|  = \sum_{l=0}^r |S_{r-l}^{\cF_1}| |S_{l}^{\cF_2}|
\end{equation}
    
\end{corollary}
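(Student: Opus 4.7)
The plan is to take cardinalities on both sides of the disjoint-union identity \eqref{eq: splitting spheres} proved just above. First I would note that since the union is disjoint, its cardinality equals the sum of the cardinalities of the pieces:
\[
|S_r^{\cF}(0)| = \sum_{\substack{l_1+l_2 = r \\ l_i \leq R_i}} |S_{l_1}^{\cF_1}(0) \oplus S_{l_2}^{\cF_2}(0)|.
\]

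Next I would observe that the map $(v_1,v_2)\mapsto v_1+v_2$ from $S_{l_1}^{\cF_1}(0)\times S_{l_2}^{\cF_2}(0)$ into $V = V_1\oplus V_2$ is a bijection onto its image, since $V_1\cap V_2 = \{0\}$ implies unique decomposition. Hence
\[
|S_{l_1}^{\cF_1}(0) \oplus S_{l_2}^{\cF_2}(0)| = |S_{l_1}^{\cF_1}(0)|\cdot |S_{l_2}^{\cF_2}(0)|.
\]

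Finally, I would remove the constraint $l_i\leq R_i$ from the index set: whenever $l_i > R_i$, the sphere $S_{l_i}^{\cF_i}(0)$ is empty by the definition of $R_i$, contributing $0$ to the sum. After setting $l := l_2$ (so $l_1 = r-l$), the sum simplifies to $\sum_{l=0}^{r} |S_{r-l}^{\cF_1}(0)|\, |S_{l}^{\cF_2}(0)|$, which is the desired identity. There is no real obstacle here — this is essentially a bookkeeping consequence of the structural decomposition already proven.
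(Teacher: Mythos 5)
Your proof is correct and follows exactly the route the paper intends: take cardinalities in the disjoint-union decomposition of $S_r^{\cF}(0)$, use unique decomposition in $V_1\oplus V_2$ to turn each piece into a product of sphere sizes, and drop the constraint $l_i\leq R_i$ since empty spheres contribute zero. The paper states this corollary with no more justification than ``taking the cardinality of the set in Equation \eqref{eq: splitting spheres}'', so you have simply filled in the same bookkeeping it leaves implicit.
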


We conclude this subsection with a connection between sphere sizes distributions and coset weight distribution.  The theory of coset leader weights (see definition \ref{def: coset leader weight} and \cite{jurrius2009extended}) revolves largely around linear subspaces or codes in the Hamming metric.  Given a linear subspace $C$ of $\Ff_q^{n}$ endowed with the Hamming metric, its \myfont{coset leader weight distribution} $\alpha_C$ is the sequence $(\alpha_0,\alpha_1,\ldots,\alpha_n)$ of length $n+1$, where $\alpha_i$ is the number of cosets of $C$ that are of coset leader weight $i$. It was already noted by Gabidulin in \cite{gabidulin1998metrics}  that the sphere size distribution of a projective metric is equal to the coset weight distribution of its parent code. In general, given any linear subspace \(\cC\) there exists a projective metric such that its coset weight distribution is equal to the sphere size distribution of the projective metric. This is a direct consequence of the interpretation of projective metrics as quotient weights (Proposition \ref{prop: bijection parent codes and projective weights}) (in case $d_{\operatorname{H}}(C) \geq 3$) and Corollary \ref{cor: reduction to parent functions}  (in case $d_{\operatorname{H}}(C) \leq 2$).

\begin{proposition}
Let $\cC$ be a linear subspace of $\Ff_q^{\N}$ endowed with the Hamming metric. Then there exists a finite vector space $V$ weighted with a projecive metric $\wt_{\cF}$ such that the coset leader weight distribution $\alpha_C$ is equal to the sequence of sphere sizes
$(S_0^\cF,S_1^\cF,\ldots,S_\N^\cF)$ of $V$. 
\end{proposition}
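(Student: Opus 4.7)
The plan is to take $V := \Ff_q^{\N}/\cC$ equipped with the canonical projection $\pi : \Ff_q^{\N} \twoheadrightarrow V$. Unwinding Definition \ref{def: quotient weight}, the induced quotient weight satisfies $\Qwt{\pi}(y+\cC) = \min\{\wtH(x) : x \in y+\cC\}$, which by Definition \ref{def: coset leader weight} is exactly the coset leader weight $\wtH(y+\cC)$. Hence, essentially by definition of $\alpha_{\cC}$, the cardinality of $\{v \in V : \Qwt{\pi}(v) = i\}$ is $\alpha_i$ for every $i$.

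Next I would invoke Corollary \ref{cor: reduction to parent functions} applied to the surjective map $\pi$. This yields an integer $\N' \leq \N$ together with a parent function $\varphi : \Ff_q^{\N'} \twoheadrightarrow V$ whose induced quotient weight coincides with $\Qwt{\pi}$. By Proposition \ref{prop: proj iff quotient 1}, this quotient weight is precisely the projective weight $\wt_{\cF}$ associated to the spanning family $\cF := \{\spn{\varphi(e_i)} : i \in [\N']\}$, so $(V,\wt_{\cF})$ is the projective metric space we want.

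Combining the two paragraphs, $|S_i^{\cF}| = |\{v \in V : \wt_{\cF}(v) = i\}| = \alpha_i$ for every $i \in \{0,1,\ldots,\N\}$, and thus the sphere-size sequence of $(V,\wt_{\cF})$ agrees termwise with $\alpha_{\cC}$. The only mild bookkeeping point is indexing: cosets whose leader weight exceeds the covering radius of $\cC$ contribute $\alpha_i = 0$, matching the empty spheres $S_i^{\cF} = \emptyset$ at the same radii, so the two length-$(\N+1)$ sequences really do coincide. No genuine obstacle arises, as all of the technical content has already been absorbed into Corollary \ref{cor: reduction to parent functions}; the proposition is essentially a direct unwinding of the definitions.
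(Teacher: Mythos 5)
Your proposal is correct and follows essentially the same route the paper intends: the paper gives no written proof but points to the quotient-weight interpretation and to Corollary \ref{cor: reduction to parent functions}, which is exactly the argument you spell out (quotient $\Ff_q^{\N}/\cC$ carries the coset leader weight as its quotient weight, and the reduction to a parent function shows this is a projective metric without changing the weight). The indexing remark about zero entries beyond the covering radius is a reasonable bit of bookkeeping and nothing is missing.
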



\subsection{Perfect codes}
\label{subsectiond: perfect codes}
A code is \emph{perfect} if there exists an integer \(t\) such that the union of all balls of radius \(t\) centred at the codewords partitions the ambient space.

\begin{definition}[Perfect codes]
    Given a distance function \(d(\cdot,\cdot)\) on \(V\) and a code \(\cC \subseteq V\), we say that \(\cC\) is \textbf{perfect} with respect to \(d(\cdot,\cdot)\) if there exists a \(t \in \bN\) such that:

    \begin{equation}
        \bigsqcup_{c \in \cC}B_t(c) = V
    \end{equation}
    where \(B_t(c) := \{x \in V : d(x,c) \leq t\}\) are disjoint balls of radius \(t\) centered around codewords.
\end{definition}

\begin{observation}
If $d(\cdot,\cdot)$ is a convex metric (e.g. a projective metric), \(\cC \subseteq V\) is perfect with balls of radius \(t\) and \(|\cC| \geq 2\), then \(t = \frac{d(\cC)-1}{2}\) with \(d(\cC) := \min_{\substack{x,y \in {\cC} \\ x \neq y}} \left(d(x,y)\right)\). This follows from the fact that for any \(c \in \cC\) there exists a different  \(c' \in \cC\) such that \(B_{t+1}(c) \cap B_{t}(c')\) is nonempty, containing some vector \(v\). Then \(d(c,c') \leq d(c,v) + d(v,c') = 2t+1\). On the other hand, for two distinct codewords \(x,x' \in \cC\) the distance \(d(x,x')\) cannot be less then \(2t+1\), otherwise, by convexity, the balls \(B_{t}(x)\)  and \(B_{t}(x')\) wouldn't be disjoint.


\end{observation}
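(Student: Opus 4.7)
My plan is to establish the two inequalities $d(\cC) \geq 2t+1$ and $d(\cC) \leq 2t+1$ separately, combining them to get the required equality $t = (d(\cC)-1)/2$. Both directions reduce to short triangle-inequality arguments, but each uses a different half of the definition of ``perfect'': one uses disjointness of the balls, the other uses that the balls cover $V$.

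For the lower bound, I would proceed by contradiction. Suppose $d(c_1, c_2) \leq 2t$ for some distinct $c_1, c_2 \in \cC$. By integral convexity (Definition \ref{def: convex}, condition (2)), there exists an intermediate point $v \in V$ satisfying $d(c_1, v) = \lfloor d(c_1,c_2)/2 \rfloor \leq t$ and $d(v, c_2) = \lceil d(c_1,c_2)/2 \rceil \leq t$. This $v$ lies in $B_t(c_1) \cap B_t(c_2)$, contradicting disjointness.

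For the upper bound, fix any $c \in \cC$ and, using $|\cC| \geq 2$, pick some $c' \in \cC$ with $c' \neq c$. Disjointness of the balls forces $c' \notin B_t(c)$, so $d(c, c') \geq t+1$. Integral convexity now supplies a point $v \in V$ with $d(c, v) = t+1$ lying along a minimal path from $c$ to $c'$; in particular $v \notin B_t(c)$. By the covering property of the perfect code, $v \in B_t(c'')$ for some $c'' \in \cC$, necessarily distinct from $c$. The triangle inequality then yields $d(c, c'') \leq d(c, v) + d(v, c'') \leq (t+1) + t = 2t+1$, so $d(\cC) \leq 2t+1$. Combining the two bounds gives the claim.

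I do not anticipate any real obstacle: the argument is essentially a packaging of the triangle inequality around the partition property. The only subtle point is the systematic appeal to integral convexity to realize intermediate distances \emph{exactly}; without this assumption one could only locate approximate midpoints, and the midpoint of a ball of radius $t$ and a ball of radius $t+1$ might fail to exist as a lattice point of the metric. This is precisely why the hypothesis ``$d$ is convex'' (equivalently, projective, by Proposition~\ref{prop: projective iff convex scale trans}) is essential in the statement.
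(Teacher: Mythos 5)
Your proof is correct and follows essentially the same route as the paper: disjointness plus a convexity midpoint gives $d(\cC)\geq 2t+1$, while the covering property applied to a point at distance exactly $t+1$ from a codeword gives $d(\cC)\leq 2t+1$. You merely make explicit a step the paper leaves implicit, namely using convexity along a minimal path to produce the point $v$ with $d(c,v)=t+1$ before invoking the covering property.
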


\medskip
For clarity we often refer to perfect codes respect to the Hamming metric as \myfont{Hamming perfect} and with respect to  a projective metric with spanning family  \(\cF\) as \myfont{\(\bm{\cF}\)-perfect}.\\

Perfect codes for projective metrics were first investigated by Gabidulin and Simonis in~\cite{gabidulin1998metrics} and \cite{gabidulin1998perfect}.  
Part of their analysis relies on the following statement {\cite[Proposition 2]{gabidulin1998metrics}}:\\

\begin{remark}
    For every non-zero linear code \(0\neq\cD\subset V\) we have
  \begin{equation}\label{eq:wrong-eq}
    \dH\!\bigl(\varphi^{-1}(\cD)\bigr) = \min\{\dF(\cD),\dH(\pc)\}.
  \end{equation}
\noindent
In particular, when $\dH(\pc) \geq \max_{x \in \bF_q^\N}\{\dH(x,\pc)\} = \max_{y \in V}\{\wt_\F(y)\}$, then 
\begin{equation}\label{eq:wrong-eq}
    \dH\!\bigl(\varphi^{-1}(\cD)\bigr) = \dF(\cD).
  \end{equation}
\end{remark}


As a direct consequence {\cite[Corollary 1]{gabidulin1998metrics}}, there is a bijection between optimal codes in the Hamming metric space $\Ff_q^\N$ containing $\pc$ and optimal codes in the projective metric space $(V,\wt_\F)$. Here, optimality is with respect to cardinality and minimum distance. In the rest of this subsection we extend this result and describe a correspondence between perfect linear codes in projective metrics and perfect linear codes in the Hamming metric.



\begin{proposition}\label{prop: F-perfect codes}
   Given a projective weight \(\wt_\cF\) on \(V\), a parent function \(\varphi : \bF_q^\N \to V\) with parent code \(\pc\). Suppose that $\dH(\pc) \geq \max_{y \in V}\{\wt_\F(y)\}$. Then a linear code \(\hat\cC\) containing the parent code \(\pc \subset \hat\cC\) is Hamming perfect if and only if \(\cC \coloneqq \varphi(\hat\cC)\) if \(\cF\)-perfect. In such case we have that \(\dH(\hat\cC) = \dF(\cC)\).
\end{proposition}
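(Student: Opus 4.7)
The plan is to exploit the fact that the parent function $\varphi : (\bF_q^\N, \wtH) \twoheadrightarrow (V, \wt_\F)$ carries Hamming balls surjectively onto projective balls, and then to combine this ball-preservation property with the identification of minimum distances provided by the remark stated just before the proposition.

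First, I would establish the ball-preservation identity
\[
\varphi(\BH_s(x)) = \BF_s(\varphi(x)) \qquad \text{for every } x \in \bF_q^\N \text{ and } s \in \bN.
\]
The inclusion $\subseteq$ is immediate since $\varphi$ is a contraction. For the reverse inclusion, if $y \in \BF_s(\varphi(x))$ then $\wt_\F(y - \varphi(x)) \leq s$, and by the identification $\wt_\F = \Qwt{\varphi}$ there exists $w \in \bF_q^\N$ with $\varphi(w) = y - \varphi(x)$ and $\wtH(w) \leq s$; hence $x + w \in \BH_s(x)$ maps to $y$.

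From this I would deduce the covering equivalence: for every $s \in \bN$,
\[
\bigcup_{\hat c \in \hat\cC} \BH_s(\hat c) = \bF_q^\N \quad\Longleftrightarrow\quad \bigcup_{c \in \cC} \BF_s(c) = V.
\]
The forward direction follows by applying $\varphi$ and using surjectivity. Conversely, given $x \in \bF_q^\N$, pick $c \in \cC$ with $\varphi(x) \in \BF_s(c)$; ball-preservation then provides $\hat c \in \varphi^{-1}(c) \cap \BH_s(x)$, and since $\pc \subseteq \hat\cC$ gives $\varphi^{-1}(\cC) = \hat\cC$, this $\hat c$ lies in $\hat\cC$. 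Next, assuming $\hat\cC \supsetneq \pc$ so that $\cC \neq 0$, the remark preceding the proposition yields $\dH(\hat\cC) = \dH(\varphi^{-1}(\cC)) = \dF(\cC) =: d$ under the standing hypothesis $\dH(\pc) \geq \max_{y \in V} \wt_\F(y)$. The degenerate case $\hat\cC = \pc$ (so $\cC = \{0\}$) must be handled separately, but is direct: both perfectness conditions then reduce to the covering radius of $\pc$ being at most the prescribed ball radius, and these agree because $\max_{y \in V} \wt_\F(y)$ equals the covering radius of $\pc$.

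Finally, since both the Hamming metric and any projective metric are convex, the observation in Subsection~\ref{subsectiond: perfect codes} forces any radius of perfection to equal $t := \lfloor (d-1)/2 \rfloor$. For this common $t$, disjointness of balls around distinct codewords follows automatically on both sides from $d \geq 2t+1$, so the biconditional reduces to the covering equivalence already established, with the same radius on both sides and with $\dH(\hat\cC) = \dF(\cC)$ by construction. The main non-routine step is the ball-preservation identity; once that is in hand, everything else is bookkeeping built on top of the remark and the perfect-code observation.
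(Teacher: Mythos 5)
Your proposal is correct, and it follows the same overall strategy as the paper (transfer coverings through the parent function $\varphi$ and identify the two minimum distances), but the key lemma you use is genuinely different and arguably cleaner. The paper's proof leans on Proposition~\ref{prop: small spheres bijection}, the \emph{bijective} correspondence between Hamming spheres and $\cF$-spheres, which is only valid for radii $t \leq \lfloor(\dH(\pc)-1)/2\rfloor$ and therefore forces the paper to first check $\dH(\hat\cC)\leq\dH(\pc)$ before it can push balls forward. You instead prove the unconditional identity $\varphi(\BH_s(x)) = \BF_s(\varphi(x))$ for \emph{every} radius $s$, which needs only that $\varphi$ is a contraction realizing $\wt_\F$ as the quotient of $\wtH$; injectivity is never used, because you obtain disjointness on both sides purely from the minimum distance via convexity. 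This isolates precisely where the hypothesis $\dH(\pc)\geq\max_y\wt_\F(y)$ enters: only through the remark giving $\dH(\hat\cC)=\dH(\varphi^{-1}(\cC))=\dF(\cC)$, which is what makes the two radii of perfection coincide. The paper instead re-derives the inequality $\dH(\hat\cC)\leq\dF(\cC)$ by an explicit coset computation inside the forward direction and is rather terse about why the converse covering goes through with the right radius. You also treat the degenerate case $\hat\cC=\pc$ (so $\cC=\{0\}$), which the paper's Observation on perfect codes excludes ($|\cC|\geq 2$) and whose proof silently ignores; your reduction of both perfectness conditions to the covering radius of $\pc$ is the right way to dispose of it. In short: same skeleton, but your ball-image lemma is more general than the paper's sphere bijection, and your bookkeeping of where each hypothesis is used is tighter.
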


\begin{proof}


     
     Let $t = \left\lfloor \frac{\dH(\hat \cC)-1}{2}\right\rfloor$. We assume that \(\hat \cC\) is Hamming perfect, so 
      \(
        \bF_q^\N = \bigsqcup_{x \in \hat \cC} B_t^H(x).
    \)
    Let \(y \in \hat{\cC}\) be any codeword. Since \(y + \pc \subseteq \hat \cC\) we have that \( \dH(\hat \cC) \leq \dH(y + \pc) = \dH(\pc)\). Hence we can use Proposition \ref{prop: small spheres bijection} to find \(\varphi(B_t^H(x)) = B_t^{\cF}(\varphi(x))\) for any \(x \in \hat \cC\) and thus
 \begin{equation*}
        V = \varphi(\bF_q^\N) = \bigcup_{x \in \hat \cC} \varphi(B_t^H(x))= \bigcup_{v \in \cC}B_t^{\cF}(v).
    \end{equation*}
Also note that if \(v,w \in \cC\) are codewords with \(d_\cF(v,w) = d_\cF(\cC)\), then  
\[
\dH(\hat \cC) = \min_{\substack{x,y \in \hat{\cC} \\ x \neq y}} \left(\dH(x,y)\right) \leq \min_{\substack{x \in \varphi^{-1}(v) \\ y  \in \varphi^{-1}(w)}} \left(\dH(x,y)\right) = d_\cF(v,w) = d_\cF(\cC).
\] 
This implies that the balls \(B_t^{\cF}(v)\) for \(v \in \cC\) are all disjoint and thus
 \begin{equation*}
        V = \bigsqcup_{v \in \cC}B_t^{\cF}(v).
    \end{equation*}
As a consequence, \(\dH(\hat{\cC}) = 2t + 1 = d_\cF(\cC)\). 

We now assume that \(\cC\) is \(\cF\)-perfect, let \(t = \left\lfloor \frac{\dH(\hat{\cC}) - 1}{2} \right\rfloor \leq \dH(\pc)\). We already know that the Hamming balls of radius \(t\) centered at the codewords of \(\hat{\cC}\) are disjoint; it remains to show that their union covers the entire space \(\mathbb{F}_q^{\N}\).

Let \(x \in \mathbb{F}_q^{\N}\). Since \(\cC\) is \(\cF\)-perfect, there exists \(y \in \cC\) such that \(\mathrm{d}_{\cF}(\varphi(x), y) \leq t\). This implies that
\[
\dH(x, \varphi^{-1}(y)) \leq t,
\]
i.e., there exists \(x' \in \hat{\cC}\) such that \(x \in B_t^{\operatorname{H}}(x')\). Therefore, \(\hat{\cC}\) is Hamming-perfect.
  
\end{proof}

\subsection{Matroids and Projective Metrics}\label{subsec: matroids}
The notion of equivalence between projective metrics based on weights is quite restrictive. 
Given two spanning families $\mathcal{F}$ and $\mathcal{G}$ in an $N$-dimensional space $V$, a projective equivalence is univocally determined by assigning $N+1$ elements of $\mathcal{F}$ to the same number of elements of $\mathcal{G}$. 
However, this may not induce an actual equivalence if the resulting linear isomorphism does not map $\mathcal{F}$ to $\mathcal{G}$.
Furthermore, many very \onequote\onequote similar'' projective metrics, sharing important characteristics such as ball sizes, etc., are not equivalent.
For this reason, in this subsection, we explore how projective metrics behave when we consider their natural matroid structure. With this structure, all equivalent projective metrics are also equivalent as matroids. However, the opposite implication is not true; therefore, matroid equivalence of projective metrics is a less strict equivalence. 
%
For an introduction to matroid theory, refer to \cite{oxley2003matroid}.

\begin{definition}[Matroid associated to $\cF$]
Given a projective metric defined by $\cF \subset V$. Let $A_{\cF} \in \bF^{N \times \N}$ be the matrix that has as columns the vectors in $\cF$. The \emph{matroid associated to } $\cF$ is $M(\cF) \coloneqq M(A_{\cF})$.
\end{definition}

\begin{remark}
\begin{enumerate}
    \item  Changing the order of the columns of $A_{\cF}$ gives equivalent matroids.
    \item \(
            \cF \equiv\cG \implies M(\cF) \equiv M(\cG) 
            \)
    \item For every cycle \(c \subset \cF\) exist $a_1,\ldots,a_{|c|} \in \bF\withoutzero$ such that $\sum_{i = 1}^{|c|}a_ic_i = 0$. 
\end{enumerate}
\end{remark}

For a general field \(\bF\), equivalence as projective weights and equivalence as matroids are not equivalent. This is because if we have a matroid equivalence \(\varphi \) between \(\cF\) and \(\cF'\), we may define the linear isomorphism \(\Phi\) mapping basis \(\cB \subset \cF\) to the basis \(\varphi(\cB)\), but this does not guarantee that \(\Phi\) will map \(\cF\) to \(\cF'\). We must note that this is indeed the case for \(\bF = \bF_2\), to see this, for every \(f \in \cF \withoutzero\) consider the smallest \( I \subset \cB\) such that \(c :=I \cup \{f\}\) is linearly dependent. Then \(c\) is a cycle respect to \(M(\cF)\), and \(f = \sum_{g \in I}g\). Then \(\varphi(c)\) is also a cycle respect to \(M(\cF')\) and thus \(\varphi(f) = \sum_{g \in I}\varphi(g) = \sum_{g \in I}\Phi(g) = \Phi(\sum_{g \in I}g) = \Phi(f)\). Thus \(\Phi\) maps \(\cF\) to \(\cF'\) and is thus a projective weights equivalence. In summary:

\begin{remark}
     For \(\bF = \bF_2\) matroid equivalence and projective equivalence are equivalent.
\end{remark}

To understand how the sizes of the balls relate to the matroid equivalence classes, we consider the inclusion-exclusion principle, which allows us to compute the cardinality of the intersections of various subspaces generated by subsets of $\cF$. In particular, we say that a hyperplane in \(V\) is generated by vectors in \(\cF\) whenever it is the span of \(N-1\) linearly independent vectors in \(\cF\). Although the matroid structure suffices to describe the intersection between any two such subspaces (see Appendix \ref{appendix: matroid subspacex intersection}), it does not provide sufficient information to describe these intersections in general. Indeed, there exist matroid-equivalent projective metrics that have different ball sizes (see \cite{jurrius2009extended}). However, the matroid associated with an extension of $\cF$ contains enough information to fully describe the intersections between subspaces generated by \(\cF\). In this subsection we see that when the matroid associated to the extend families of two projective metrics are equivalent, then the sphere sizes of the two metrics also coincide. Following the ideas of \cite{jurrius2009extended} on derived codes, we consider an extension of spanning families that contains intersections of hyperplanes generated by vectors in \(\cF\).

\begin{definition}[Extended Projective Metric]
Let $\cF \subset V$ be a spanning family. The \emph{extended projective metric} of $\cF$ is the set $\overline{\cF}$ containing all vectors $v \in V$ such that $\langle v \rangle$ is the intersection of hyperplanes generated by vectors in $\cF$. If $\cF = \overline{\cF}$, we say that $\cF$ is closed under extension.
\end{definition}

\begin{example}
Let \(V\) be a vector space of dimension \(N\) over a finite field \(\mathbb{F}\). Let \(\cB \subset V\) be a basis of \(V\), and define
\[
w \coloneqq \sum_{v \in \cB} v.
\]
Consider the projective metric on \(V\) induced by the spanning family \(\cF \coloneqq \cB \cup \{w\}\). This metric corresponds to the Phase Rotation metric (\ref{example: Phase Rotation Metric}) in the space \(\mathbb{F}^N\), where vectors are expressed in the basis \(\cB\).

\medskip

The extended family \(\overline{\cF}\) of \(\cF\) is
\[
\overline{\cF} \coloneqq \Bigl\{\, w - \sum_{g \in I} g \;\Bigm|\; I \subset \cB \Bigr\},
\]
i.e., the set of all vectors in \(V\) whose coordinates (in the basis \(\cB\)) take at most two values, one of which is zero.
\medskip
For a proof of this statement, see Appendix \ref{appendix: example matroid equivalence}.
\end{example}
The following proposition motivates the name \emph{extension} of \(\cF\).
\begin{proposition}
   \( \cF \subset \overline{\cF}\)
\end{proposition}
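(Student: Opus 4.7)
The plan is to show that for every $f \in \cF$, the one-dimensional subspace $\langle f \rangle$ can be realized as an intersection of hyperplanes, each spanned by $N-1$ linearly independent vectors of $\cF$. The key idea is to extend $\{f\}$ to a basis contained in $\cF$ and then cut out $\langle f \rangle$ by removing one basis vector at a time.

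More precisely, first I would handle the trivial case $N = \dim V \leq 1$: if $N=1$, then $\langle f\rangle = V$, which is the empty intersection of hyperplanes generated by $\cF$, so $f \in \overline{\cF}$ by convention. Assume $N \geq 2$. Since $\cF$ is a spanning family and $f \in \cF$ is nonzero (by pairwise linear independence we have $0 \notin \cF$), the matroid exchange/Steinitz lemma allows me to extend $\{f\}$ to a basis
\[
\{f_1 := f,\, f_2,\, \ldots,\, f_N\} \subseteq \cF
\]
of $V$.

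Then, for each $i \in \{2,\ldots,N\}$, define
\[
H_i := \bigl\langle f_1,\, f_2,\, \ldots,\, \widehat{f_i},\, \ldots,\, f_N \bigr\rangle,
\]
where $\widehat{f_i}$ indicates that $f_i$ is omitted. Since the $N-1$ remaining vectors are linearly independent and all lie in $\cF$, each $H_i$ is by definition a hyperplane generated by vectors of $\cF$, and clearly $f = f_1 \in H_i$.

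It remains to verify the set equality
\[
\bigcap_{i=2}^{N} H_i \;=\; \langle f \rangle.
\]
The inclusion $\supseteq$ is immediate since $f \in H_i$ for every $i$. For the reverse inclusion, any $x \in \bigcap_{i=2}^N H_i$ can be expanded uniquely in the basis as $x = \sum_{j=1}^N \alpha_j f_j$; the membership $x \in H_i$ forces $\alpha_i = 0$ by linear independence, so $\alpha_2 = \cdots = \alpha_N = 0$ and $x = \alpha_1 f \in \langle f \rangle$. This shows $\langle f \rangle$ is an intersection of hyperplanes generated by $\cF$, hence $f \in \overline{\cF}$.

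The only genuinely delicate step is invoking the basis extension property ($\cF$ spans $V$ and $f$ is nonzero, so $\{f\}$ can be completed to a basis inside $\cF$), but this is standard linear algebra. Everything else is a direct unwinding of the definition of $\overline{\cF}$.
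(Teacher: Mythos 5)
Your proof is correct and follows essentially the same route as the paper: extend $\{f\}$ to a basis $\cB \subseteq \cF$ and recover $\langle f \rangle$ as the intersection of the spans of the $(N-1)$-element subsets of $\cB$ containing $f$, which are exactly your hyperplanes $H_i$. The only difference is that you spell out the verification and the $N=1$ edge case, which the paper leaves implicit.
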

\begin{proof}
    This is true since for all \(f \in \cF\) there exists a basis \(\cB \subset \cF\) such that \(f \in \cB \subset \cF\), and \[\spn{f} = \cap_{\substack{f \in I \subset \cB\\ |I|=N-1}}\spn{I}.\]
\end{proof}

By definition, any one-dimensional intersection of hyperplanes generated by a vector in \(\cF\), is generated by a vector in \(\overline{\cF}\), the following proposition shows that any intersection of such hyperplanes can be generated by vectors in \(\overline{\cF}\).

\begin{proposition}
\label{prop: extension intersection}
    If \(\cF\) be a spanning family with extension \(\overline{\cF}\).
   If \(W \coloneqq \cap_{j =1}^{N-w}H_j\) where \(H_j\) are hyperplanes generated as vectors in \(\cF\), and \(\dim{W}=w\), there exist \(f_1,\ldots,f_w \in \overline{\cF}\) such that \[
    W= \spn{f_1,\ldots, f_w}
    \]
\end{proposition}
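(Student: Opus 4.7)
The plan is to prove the stronger claim that $\overline{\cF} \cap W$ spans $W$, from which the desired conclusion follows by extracting a basis of $W$ from this spanning set. I would proceed by induction on $w = \dim W$. The base case $w = 1$ is essentially by definition: then $W$ itself is a $1$-dimensional intersection of hyperplanes generated by $\cF$, so any nonzero vector $f \in W$ satisfies $\spn{f} = W$ and therefore lies in $\overline{\cF}$.

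For the inductive step with $w \geq 2$, my strategy is to cut $W$ down further by intersecting it with additional hyperplanes generated by $\cF$. Specifically, I would try to locate two distinct hyperplanes $H, H'$ of $V$ generated by vectors in $\cF$ such that neither contains $W$ and such that $W \cap H \neq W \cap H'$. Each intersection $\widetilde{H} = W \cap H$ is itself an intersection of $N - (w-1)$ hyperplanes generated by $\cF$ (the original $H_1,\ldots,H_{N-w}$ together with the extra hyperplane) and has dimension $w-1$, so by the induction hypothesis $\overline{\cF} \cap \widetilde{H}$ spans $\widetilde{H}$. Doing this for both hyperplane sections yields spanning sets of two different $(w-1)$-dimensional subspaces of the $w$-dimensional space $W$, and their union must then span $W$; hence $\overline{\cF} \cap W$ spans $W$, and extracting a basis of $W$ from this spanning set completes the argument.

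The main obstacle is verifying the existence of two such distinct sections when $w \geq 2$. I would resolve this by letting $\mathcal{H}$ denote the collection of all hyperplanes of $V$ generated by $\cF$ and arguing by contradiction: if every $H \in \mathcal{H}$ with $W \not\subseteq H$ yielded the same section $\widetilde{H}$ of dimension $w-1$, then every $H \in \mathcal{H}$ would contain $\widetilde{H}$, either because $H \supseteq W \supseteq \widetilde{H}$ or because $W \cap H = \widetilde{H} \subseteq H$. Consequently $\widetilde{H} \subseteq \bigcap_{H \in \mathcal{H}} H$. However, since $\cF$ is spanning, one can extract a basis $f_{i_1}, \ldots, f_{i_N} \in \cF$, and the $N$ hyperplanes obtained by omitting one basis vector at a time all lie in $\mathcal{H}$ and intersect trivially in $\{0\}$. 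This forces $\widetilde{H} = \{0\}$, contradicting $\dim \widetilde{H} = w - 1 \geq 1$, which closes the induction and yields the required vectors $f_1, \ldots, f_w \in \overline{\cF}$ with $W = \spn{f_1,\ldots,f_w}$.
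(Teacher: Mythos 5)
Your proof is correct, but it follows a genuinely different route from the paper's. The paper argues directly: it takes normal vectors $h_1,\ldots,h_{N-w}$ of the given hyperplanes, completes them to a basis of $V$ using normals of further hyperplanes generated by $\cF$ (possible because $\cF$ spans), and then defines each $f_i$ as the intersection of all but one of the resulting $N$ hyperplanes; these $w$ lines are linearly independent, lie in $W$, and belong to $\overline{\cF}$ by construction. Your argument instead inducts on $\dim W$, slicing $W$ by two $\cF$-generated hyperplanes whose sections of $W$ are distinct $(w-1)$-dimensional subspaces and combining the spanning sets obtained from the induction hypothesis. Both proofs ultimately rest on the same fact — that the hyperplanes generated by $\cF$ have trivial common intersection, via the $N$ coordinate hyperplanes of a basis extracted from $\cF$ — but the paper uses it to extend a set of normal vectors to a basis (i.e., in the dual), while you use it to rule out the degenerate case where all sections coincide. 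Your version avoids any appeal to orthogonality or dual-space language, at the cost of being longer and non-constructive; the paper's version is shorter and exhibits the $f_i$ explicitly. One small point to tighten: your contradiction argument presumes that at least one $H \in \mathcal{H}$ fails to contain $W$ so that $\widetilde{H}$ is defined; the excluded case (every $H \in \mathcal{H}$ contains $W$) should be dispatched separately, though it follows immediately from the same trivial-intersection fact since it would force $W \subseteq \bigcap_{H \in \mathcal{H}} H = \{0\}$.
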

\begin{proof}
    Let \(h_1,\ldots h_{N-w} \in V\) be the vectors orthogonal to the corresponding hyperplane \(H_j\). Add \(h_N,\ldots, h_{N-w+1}\), vectors orthogonal to  hyperplanes generated by vectors in \(\cF\) to obtain a basis of \(V\) (this is always possible because \(\cF\) is a spanning family), and \(H_1, \ldots,H_N\) be the corresponding hyperplanes. Then the vectors \(\spn{f_i} \coloneqq \cap_{\substack{j \neq N-w+i}}H_j \), for \(i = 1, \ldots, w\) are linearly independent, generate \(W\) and are in \(\overline{\cF}\).
\end{proof}

The following proposition shows that the matroid associated with the extended family of \(\wt_{\mathcal{F}}\) fully describes the sphere sizes of \(\mathcal{F}\). It can be interpreted as the matroid on \(\mathcal{F}\), to which specific vectors have been added to describe the intersections of hyperplanes generated by \(\mathcal{F}\).

\begin{proposition}

    Let  \(\cF\), \(\cG\) be two spanning families with extensions $\overline{\cF}$ and $\overline{\cG}$.
    If there is a matroid isomorphism $\miso: \overline{\cF} \to \overline{\cG}$ such that $\miso(\cF) = \cG$, then the metrics $\wt_\cF$, $\wt_\cG$ have equal sphere sizes.
\end{proposition}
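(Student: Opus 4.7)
The plan is to deduce equality of sphere sizes from equality of ball sizes, and to compute $|\BF_t(0)|$ by inclusion--exclusion over the subspaces $W_I := \spn{I}$ with $I\subseteq\cF$ and $|I|=t$. Since $W_I\subseteq W_J$ whenever $I\subseteq J$, one has $\BF_t(0)=\bigcup_{|I|=t} W_I$, so $|\BF_t(0)|$ becomes a signed sum of terms $q^{\dim(W_{I_1}\cap\cdots\cap W_{I_k})}$ indexed by unordered tuples of size-$t$ subsets of $\cF$. The goal is then to show that each such dimension depends only on the matroid $M(\overline{\cF})$ together with the distinguished subset $\cF\subseteq\overline{\cF}$, after which $\miso$ will match these invariants with those for $\cG$ term by term.

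The core technical step is to express the subspaces $W_I$ and all their intersections through flats of $M(\overline{\cF})$. First I would observe that every $W_I$ with $I\subseteq\cF$ is itself an intersection of hyperplanes generated by $\cF$: extending a maximal independent subset $B\subseteq I$ to a basis of $V$ using vectors of $\cF$, say $B\cup\{g_1,\ldots,g_m\}$, writes $W_I$ as the intersection of the $m$ hyperplanes $\spn{B\cup\{g_1,\ldots,\widehat{g_i},\ldots,g_m\}}$. Consequently every intersection $W_{I_1}\cap\cdots\cap W_{I_k}$ is also an intersection of $\cF$-generated hyperplanes, and Proposition~\ref{prop: extension intersection} then guarantees it is spanned by vectors of $\overline{\cF}$. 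Writing $\mathrm{cl}$ for the matroid closure in $M(\overline{\cF})$, one checks that $\overline{\cF}\cap W_I=\mathrm{cl}(I)$ and more generally
\[
\overline{\cF}\cap\bigl(W_{I_1}\cap\cdots\cap W_{I_k}\bigr)=\mathrm{cl}(I_1)\cap\cdots\cap\mathrm{cl}(I_k),
\]
which is a meet of flats in $M(\overline{\cF})$ and, by the previous remark, already spans the whole intersection. Hence
\[
\dim\bigl(W_{I_1}\cap\cdots\cap W_{I_k}\bigr)=r_{M(\overline{\cF})}\bigl(\mathrm{cl}(I_1)\cap\cdots\cap\mathrm{cl}(I_k)\bigr).
\]

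The matroid isomorphism $\miso:M(\overline{\cF})\to M(\overline{\cG})$ preserves rank, closure, and meets of flats, and by hypothesis restricts to a bijection $\cF\to\cG$; in particular it induces a bijection between size-$t$ subsets of $\cF$ and size-$t$ subsets of $\cG$ that preserves the ranks appearing on the right-hand side above. Each term of the inclusion--exclusion for $|\BF_t(0)|$ therefore equals the corresponding term for $|\cB^{\cG}_t(0)|$, yielding $|\BF_t(0)|=|\cB^{\cG}_t(0)|$ for every $t$ and, by subtracting consecutive values, $|\SF_t(0)|=|\cS^{\cG}_t(0)|$, as required.

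The main obstacle I anticipate is justifying that the meet $\mathrm{cl}(I_1)\cap\cdots\cap\mathrm{cl}(I_k)$ actually spans the full intersection $W_{I_1}\cap\cdots\cap W_{I_k}$. The inclusion $\supseteq$ at the level of spans is immediate, but the reverse inclusion requires Proposition~\ref{prop: extension intersection} applied to a carefully chosen family of hyperplanes, which in turn rests on the preliminary observation that each $W_I$ with $I\subseteq\cF$ itself arises as an intersection of $\cF$-generated hyperplanes. Once this dictionary between flats of $M(\overline{\cF})$ and the subspaces $W_{I_1}\cap\cdots\cap W_{I_k}$ is set up, the remainder is a purely combinatorial rewriting invoking $\miso$.
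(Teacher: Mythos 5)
Your proposal is correct and follows essentially the same route as the paper's proof: write the ball as a union of spans of size-$t$ subsets of $\cF$, apply inclusion--exclusion, and use Proposition~\ref{prop: extension intersection} to see that each intersection of such spans is spanned by elements of $\overline{\cF}$, so that its dimension is a matroid invariant preserved by $\miso$. The only differences are presentational: you phrase the final step via the closure operator and ranks of meets of flats where the paper argues with circuits (for $f_i\notin I_j$ the circuit in $\{f_i\}\cup I_j$ maps to a circuit under $\miso$, giving $\miso(f_i)\in\spn{\miso(I_j)}$), and you make explicit the observation, asserted without detail in the paper, that each $\spn{I}$ is itself an intersection of $\cF$-generated hyperplanes.
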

\begin{proof}
Let \( M_t(\mathcal{F}) \subset M(\mathcal{F}) \) denote the family of subsets of \(\mathcal{F}\) of cardinality \( t \) that are linearly independent. The ball of radius \( t \) is given by:
\[
B_t^{\mathcal{F}} = \bigcup_{J \in M_t(\mathcal{F})} \spn{J}.
\]
By the inclusion-exclusion principle, the size \( |B_t^{\mathcal{F}}| \) can be expressed in terms of the cardinalities of the intersections \( \left| \bigcap_{J \in \mathcal{J}} \spn{J} \right| \) for all \( \mathcal{J} \subset M_t(\mathcal{F}) \). Since \( \miso(M_t(\mathcal{F})) = M_t(\miso(\mathcal{F})) \) and each \( \spn{J} \) can also be seen as the intersection of hyperplanes generated by subsets of \( \mathcal{F} \). Therefore, \( \left| \bigcap_{J \in \mathcal{J}} \spn{J} \right| \) can be viewed as the intersection of \( k \) hyperplanes generated by subsets of \( \mathcal{F} \).

Thus, we need only to show that for all hyperplanes defined by \( I_1, \ldots, I_k \in M(\mathcal{F}) \), we have:
\[
\left| \bigcap_{i=1}^k \spn{I_i} \right| = \left| \bigcap_{i=1}^k \spn{\miso(I_i)} \right|.
\]
Since \( \miso^{-1} \) is also a matroid isomorphism, it suffices to show that:
\[
\left| \bigcap_{i=1}^k \spn{I_i} \right| \leq \left| \bigcap_{i=1}^k \spn{\miso(I_i)} \right|.
\]

By Proposition \ref{prop: extension intersection}  there exist \(f_1, \ldots, f_w \in \overline{\cF}\) linearly independent vectors such that \(\spn{f_1,\ldots,f_w}= \cap_{i=1}^k \spn{I_i}\), then \(\miso{(f_1)},
\ldots, \miso{(f_w)}\) are still linearly independent, we only need to show that \(\spn{\miso{((f_1))},\ldots,\miso{(f_w)}} \subset \cap_{i=1}^k \spn{\miso(I_i)} \). 
We show that \(\miso(f_i) \in \spn{\miso(I_j)} \) for all \(j=1,\ldots,k\) and for all \(i=1,\ldots,w\).
Let \(I_j \in M(\cF)\) generate \(H_j\). 
If \(f_i \in I_j\), then \(\miso(f_i) \in \miso(I_j) \subset \spn{\miso(I_j)}\). 
Otherwise, if \(f_i \notin I_j\), let \(c_j\) be the cycle such that \({f_i} \in c_j \subset \{f_i\} \cup I_j\).
Since \(\miso(c_j)\) is also a cycle, \(\miso(f_i) \in \miso(c_j) \subset \miso(I_j) \cup \{\miso(f_i)\} \), and thus \(\miso(f_i) \in \spn{\miso(I_j)}\). 
\end{proof}

The following provides a trivial counterexample to the natural question: If two projective matroids have extended families that are matroid equivalent, are they necessarily linearly equivalent? Thus matroid equivalence of the extended families of two projective metrics is a strictly weaker condition than linear equivalence.

\begin{example}
    Consider \(V = \bF_q^2\). Any projective metric is trivially closed under extension. Hence, any two projective metrics that are matroid equivalent also have matroid equivalent extended families. However, not all such projective metrics are linearly equivalent. For instance, consider the projective metrics defined by the families \(\cF \coloneqq \{(1,0), (0,1), (1,1), (1,2)\}\) and \(\cG \coloneqq \{(1,0), (0,1), (1,1), (1,3)\}\) for \(q = 7\). These projective metrics are matroid equivalent but not linearly equivalent.
\end{example}

A natural question is then if knowing the structure of the extended matroid is sufficient to know the ball sizes of a projective metric. The following result shows that this is the case.
\begin{proposition}
    Let \(\overline{\mathcal{F}}\) be the extended family of \(\mathcal{F}\). Then, we have:
    \begin{equation}
        |B^{\mathcal{F}}_t| = \sum_{\emptyset \neq H \subseteq M^t_{\mathcal{F}}}(-1)^{|H|+1}q^{|\overline{\mathcal{F}}_H|}
    \end{equation}
    where \(M^t_{\mathcal{F}} \coloneqq \{I \in M_{\mathcal{F}} \mid |I|=t\}\) and \(\overline{\mathcal{F}}_H\) is a maximal independent set in \(M_{\overline{\mathcal{F}}}\) such that for all \(f \in \overline{\mathcal{F}}_H\) and \(I \in H\), the set \(\{f\} \cup I\) forms a cycle respect to \(M_{\overline{\mathcal{F}}}\).
\end{proposition}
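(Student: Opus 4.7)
The plan is to prove the formula by inclusion--exclusion on the union $B^{\mathcal{F}}_t = \bigcup_{I \in M^t_{\mathcal{F}}} \spn{I}$ and then to identify the cardinality of each intersection $\bigcap_{I \in H} \spn{I}$ with $q^{|\overline{\mathcal{F}}_H|}$. The first step is standard: inclusion--exclusion immediately gives
$$|B^{\mathcal{F}}_t| = \sum_{\emptyset \neq H \subseteq M^t_{\mathcal{F}}} (-1)^{|H|+1} \left|\bigcap_{I \in H} \spn{I}\right|,$$
so everything reduces to showing $|\bigcap_{I \in H} \spn{I}| = q^{|\overline{\mathcal{F}}_H|}$ for each non-empty $H$.

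For this, I would first argue that each $\spn{I}$ with $I \in M^t_{\mathcal{F}}$ is itself an intersection of hyperplanes generated by $\mathcal{F}$. Since $\mathcal{F}$ is spanning, $I$ can be completed to a basis $I \cup \{g_1,\ldots,g_{N-t}\}$ with $g_j \in \mathcal{F}$, and then $\spn{I} = \bigcap_{j=1}^{N-t} \spn{I \cup (\{g_1,\ldots,g_{N-t}\}\setminus\{g_j\})}$, each of the latter being a hyperplane spanned by $N-1$ vectors from $\mathcal{F}$. Taking intersections over $I \in H$, it follows that $W_H := \bigcap_{I \in H}\spn{I}$ is itself an intersection of hyperplanes generated by $\mathcal{F}$. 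Proposition~\ref{prop: extension intersection} then provides vectors $f_1,\ldots,f_w \in \overline{\mathcal{F}}$ with $W_H = \spn{f_1,\ldots,f_w}$ and $w = \dim_{\mathbb{F}_q} W_H$, so $|W_H| = q^w$.

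It remains to show $w = |\overline{\mathcal{F}}_H|$. The condition that $\{f\} \cup I$ forms a cycle in $M_{\overline{\mathcal{F}}}$ for every $I \in H$ means exactly that $f \in \spn{I}$ and appears with non-zero coefficient on each element of $I$; equivalently, $f$ lies in $\spn{I}$ for every $I \in H$, hence in $W_H$. Conversely, the basis $\{f_1,\ldots,f_w\} \subset \overline{\mathcal{F}}$ of $W_H$ supplied by Proposition~\ref{prop: extension intersection} satisfies this cycle property for each $I$ (by replacing $f_i$ by an appropriate scalar multiple if necessary, which does not change its projective class). Since any two maximal $M_{\overline{\mathcal{F}}}$-independent subsets of $W_H \cap \overline{\mathcal{F}}$ have the same cardinality (a matroid-theoretic basis exchange), this common size must equal $\dim W_H = w$, and hence $|\overline{\mathcal{F}}_H| = w$. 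Combining this identification with the inclusion--exclusion expansion yields the claimed formula.

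I expect the main obstacle to be the careful handling of the ``cycle'' condition near the degenerate configurations: when some $f \in \overline{\mathcal{F}}_H$ is (projectively) equal to an element of $I$, the set $\{f\} \cup I$ collapses to $I$ and is not literally a circuit in $M_{\overline{\mathcal{F}}}$. One has to interpret the defining condition as saying that $f$ depends non-trivially on every element of $I$, matching it with membership in $\bigcap_{I \in H} \spn{I}$; this reinterpretation, together with verifying that the $\overline{\mathcal{F}}$-basis produced by Proposition~\ref{prop: extension intersection} can always be chosen to satisfy the cycle condition, is the most delicate combinatorial step.
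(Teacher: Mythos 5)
Your proposal is correct and follows essentially the same route as the paper: inclusion--exclusion over $M^t_{\mathcal{F}}$, reduction to showing $\bigl|\bigcap_{I\in H}\spn{I}\bigr| = q^{|\overline{\mathcal{F}}_H|}$ via Proposition~\ref{prop: extension intersection}, and then identifying $\overline{\mathcal{F}}_H$ with a basis of that intersection (the paper does this last step by a maximality/contradiction argument rather than your basis-exchange phrasing, but these are the same matroid fact). The subtlety you flag about the ``cycle'' condition when $f$ lies in the span of a proper subset of $I$ is real and is also glossed over in the paper's own proof, so it is not a defect of your approach relative to theirs.
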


\begin{proof}
    By the inclusion-exclusion principle, it suffices to show that 
    \[
    |\cap_{I \in H}\spn{I}| = q^{|\overline{\mathcal{F}}_H|}.
    \]
    Specifically, we demonstrate that:
    \[
    \cap_{I \in H}\spn{I} = \spn{\overline{\mathcal{F}}_H}.
    \]
    Since for all \(f \in \overline{\mathcal{F}}_H\) and \(I \in H\), the set \(\{f\} \cup H\) forms a cycle, it follows that \(f \in \spn{I}\) for every \(I \in H\). Therefore, \(f \in \cap_{I \in H}\spn{I}\), implying \(\spn{\overline{\mathcal{F}}_H} \subset \cap_{I \in H}\spn{I}\).
    
    By Proposition \ref{prop: extension intersection}, there exist elements \(f_1, \ldots, f_w \in M_{\overline{\mathcal{F}}}\) such that:
    \[
    \spn{f_1, \ldots, f_w} = \cap_{I \in H}\spn{I}.
    \]
    
    We proceed by contradiction. Assume that \(\spn{\overline{\mathcal{F}}_H} \subsetneq \cap_{I \in H}\spn{I}\). Then, there exists an element \(f_k\) in \(\{f_1, \dots, f_w\}\) such that \(f_k \notin \spn{\overline{\mathcal{F}}_H}\). However, since \(f_k \in \spn{I}\) for all \(I \in H\), adding \(f_k\) to \(\overline{\mathcal{F}}_H\) would contradict its maximality, thus leading to a contradiction.
\end{proof}

The following proposition can be useful to construct high dimensional examples of spanning family extensions.
\begin{proposition}\label{prop: extension-direct-sum}
Let \(\cF \subset V_1\) and \(\cG \subset V_2\) be projective point families in finite-dimensional vector spaces \(V_1\) and \(V_2\), respectively. Let \(\overline{\cF}_1 \subset V_1\) and \(\overline{\cF}_2 \subset V_2\) denote their respective extensions. Then the extension of the direct sum family satisfies:
\[
\overline{\cF \oplus \cG} = \overline{\cF}_1 \oplus \overline{\cF}_2.
\]
\end{proposition}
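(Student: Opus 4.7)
The plan is to reduce the statement to a classification of hyperplanes of $V_1 \oplus V_2$ generated by $\iota_1(\cF) \cup \iota_2(\cG)$ (where $\iota_1,\iota_2$ denote the canonical inclusions), and then control their intersections componentwise. Write $n_i := \dim V_i$.

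First I would show that every hyperplane of $V_1 \oplus V_2$ generated by $\iota_1(\cF) \cup \iota_2(\cG)$ is either of the form $H_1 \oplus V_2$, with $H_1 \subseteq V_1$ a hyperplane generated by $\cF$, or of the form $V_1 \oplus H_2$, with $H_2 \subseteq V_2$ a hyperplane generated by $\cG$. Indeed, pick $n_1 + n_2 - 1$ linearly independent vectors from $\iota_1(\cF) \cup \iota_2(\cG)$, say $a$ from $\iota_1(\cF)$ and $b$ from $\iota_2(\cG)$ with $a + b = n_1 + n_2 - 1$. Since $\iota_1(\cF) \subseteq V_1 \oplus 0$ and $\iota_2(\cG) \subseteq 0 \oplus V_2$, we must have $a \leq n_1$ and $b \leq n_2$, forcing $(a,b) \in \{(n_1, n_2 - 1), (n_1 - 1, n_2)\}$. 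The two cases yield exactly the two claimed forms, using the spanning hypothesis on each side to ensure the ``full'' factor is realised.

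Next I would compute arbitrary intersections of such hyperplanes. Grouping by type yields
\[
\bigcap_j (H^{(1)}_j \oplus V_2) \;\cap\; \bigcap_k (V_1 \oplus H^{(2)}_k) \;=\; \Bigl(\bigcap_j H^{(1)}_j\Bigr) \oplus \Bigl(\bigcap_k H^{(2)}_k\Bigr) \;=:\; W_1 \oplus W_2,
\]
with the convention that an empty intersection of type $i$ returns $V_i$. For the inclusion $\overline{\cF \oplus \cG} \subseteq \overline{\cF}_1 \oplus \overline{\cF}_2$, if $\spn{v} = W_1 \oplus W_2$ then $\dim W_1 + \dim W_2 = 1$, so exactly one $W_i$ is one-dimensional and the other is $0$. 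In the case $W_2 = 0$ we obtain $v = \iota_1(v_1)$ with $\spn{v_1} = W_1 = \bigcap_j H^{(1)}_j$, so $\spn{v_1} \in \overline{\cF}_1$; symmetrically in the case $W_1 = 0$.

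For the reverse inclusion, given $v_1 \in \overline{\cF}_1$ with $\spn{v_1} = \bigcap_j H^{(1)}_j$, I would realise $\spn{\iota_1(v_1)} = \bigl(\bigcap_j H^{(1)}_j\bigr) \oplus 0$ as an intersection of $\iota_1(\cF) \cup \iota_2(\cG)$-generated hyperplanes by killing the $V_2$-factor: pick a basis $\cB_2 \subseteq \cG$ of $V_2$ (possible since $\cG$ spans $V_2$) and set $H^{(2)}_g := \spn{\cB_2 \setminus \{g\}}$ for $g \in \cB_2$, so that $\bigcap_{g \in \cB_2} H^{(2)}_g = 0$; combine these with the $H^{(1)}_j \oplus V_2$'s. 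The symmetric construction using a basis of $V_1$ inside $\cF$ handles elements of $\iota_2(\overline{\cF}_2)$. The main obstacle — really a bookkeeping point — is the hyperplane classification in the first step; once that is in hand, the two spanning hypotheses are exactly what is needed to realise $0$ as an intersection on each side, and the componentwise dimension count closes the proof.
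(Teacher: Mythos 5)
Your proof is correct, and it is noticeably more explicit than the one in the paper. The paper argues through the matroid: since $\cF$ and $\cG$ live in complementary summands, every linear dependency among elements of $\cF\oplus\cG$ is supported entirely in one component, so the circuits of $M_{\cF\oplus\cG}$ are the disjoint union of the circuits of $M_\cF$ and $M_\cG$; from this it asserts in one line that the extensions needed in the direct sum are exactly those needed componentwise. You instead work directly with the hyperplane lattice: the dimension count $a+b=n_1+n_2-1$ with $a\le n_1$, $b\le n_2$ classifies every $(\cF\oplus\cG)$-generated hyperplane as $H_1\oplus V_2$ or $V_1\oplus H_2$, and then $(A_1\oplus B_1)\cap(A_2\oplus B_2)=(A_1\cap A_2)\oplus(B_1\cap B_2)$ splits every intersection componentwise. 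Both arguments rest on the same underlying fact (complementarity of the supports), but yours buys two things the paper's version leaves implicit: the forward inclusion becomes a clean dimension count ($\dim W_1+\dim W_2=1$ forces one factor to vanish), and the reverse inclusion is actually justified --- you exhibit the coordinate hyperplanes $\spn{\cB_2\setminus\{g\}}$ built from a basis $\cB_2\subseteq\cG$ whose intersection kills the $V_2$-factor, which is precisely where the spanning hypotheses enter and which the paper's circuit argument never addresses. The only residual degenerate case is when, say, only type-2 hyperplanes occur and $W_1=V_1$ is itself one-dimensional; there you should fall back on $\cF\subseteq\overline{\cF}_1$ rather than on an intersection of type-1 hyperplanes, but this is a one-line patch and does not affect the argument.
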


\begin{proof}
The extension \(\overline{\cF \oplus \cG}\) is defined as the minimal set of additional projective points needed to fully describe all hyperplane intersections among the elements of \(\cF \oplus \cG\). Since \(\cF \subset V_1\) and \(\cG \subset V_2\) span disjoint coordinate spaces, any linear dependency among elements of \(\cF \oplus \cG\) must occur entirely within either \(V_1\) or \(V_2\).

Consequently, the circuits (i.e., minimal dependent sets) of the matroid \(M_{\cF \oplus \cG}\) decompose as the disjoint union of the circuits in \(M_{\cF}\) and \(M_{\cG}\). Thus, in order to consider all intersections in the direct sum, it suffices to include exactly the extensions required for each component independently.

Hence, the extended family of \(\cF \oplus \cG\) is exactly the direct sum of the extensions:
\[
\overline{\cF \oplus \cG} = \overline{\cF}_1 \oplus \overline{\cF}_2. \qedhere
\]
\end{proof}

\newpage

{
\section{Singleton bound}\label{sec: Singleton Bound}

{
A generalization of the Singleton bound for translation invariant metrics, such as projective metrics, can be obtained by introducing the concept of anticodes.

\begin{definition}
    Let \(V\) be a vector space over \(\bF\). Let \(t \in \bN\) be an integer and \(d:V\times V \to \bR\) a distance function. A \(t\)-anticode is a non-empty subset \(\cA \subset V\) such that \(d(x,y) \leq t\) for all \(x,y \in \cA \). We say that \(\cA\) is linear if it is a subspace of \(V\). 
\end{definition}

\begin{lemma}[Code-Anticode Bound]\label{lem:Linear Code-Anticode Bound}
    Let \(d\) be a translation-invariant distance function and \(t \in \bN\). If \(\cC \subset V\) is a linear code with minimum distance \(d(\cC) \geq t\), then
    \begin{equation}
        |\cC| \leq \frac{|V|}{max\{|\cA| \mid \cA \text{ is a (t-1) linear anticode}\}} \label{eq:Linear Code-Anticode Bound}
    \end{equation}
\end{lemma}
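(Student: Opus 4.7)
The plan is to use the standard linear code-anticode argument: factor $\cC$ through the quotient $V/\cA$ and show that the induced map is injective. First I would fix any $(t-1)$-linear anticode $\cA \subseteq V$, and note that because $\cA$ is a linear subspace we may form the quotient vector space $V/\cA$, whose cardinality is $|V|/|\cA|$. Consider the composition
\[
\pi \;:\; \cC \ \hookrightarrow \ V \ \twoheadrightarrow \ V/\cA.
\]
The entire content of the lemma will come from proving that $\pi$ is injective.

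For injectivity, suppose $c_1, c_2 \in \cC$ with $\pi(c_1) = \pi(c_2)$. Then $c_1 - c_2 \in \cA$. Since $\cA$ is a linear subspace it contains $0$, and because $\cA$ is a $(t-1)$-anticode we get $d(c_1 - c_2, 0) \leq t-1$. By translation invariance of $d$, this gives $d(c_1, c_2) = d(c_1 - c_2, 0) \leq t-1 < t \leq d(\cC)$, which forces $c_1 = c_2$.

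From injectivity of $\pi$ we immediately obtain
\[
|\cC| \ \leq \ |V/\cA| \ = \ \frac{|V|}{|\cA|}.
\]
Since this holds for every $(t-1)$-linear anticode $\cA$, taking the supremum of $|\cA|$ over all such anticodes yields the claimed inequality \eqref{eq:Linear Code-Anticode Bound}.

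The argument presents no real obstacle; the only subtleties worth flagging are the use of linearity of $\cA$ (so that $V/\cA$ is well-defined and $0 \in \cA$, which is what guarantees $c_1 - c_2$ lies in the diameter bound $t-1$) and the use of translation invariance of $d$ to transfer the distance bound on $c_1 - c_2$ and $0$ to one on $c_1$ and $c_2$. These are exactly the hypotheses of the lemma, so nothing further is needed.
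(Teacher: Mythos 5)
Your proof is correct; the paper states this lemma without proof, and your quotient-map injectivity argument is the standard one. It is also essentially the same technique the paper uses for its General Singleton-type bound (Theorem \ref{thm: gen singleton}), where the quotient by the anticode $\cA = \spn{\G}$ is realized concretely as a coordinate projection after extending a basis of $\G$ to a basis of $V$, and injectivity follows from exactly the same distance computation $d(a,b) = d(a-b,0) \leq t-1 < d(\cC)$.
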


In general, the value of the denominator in \eqref{eq:Linear Code-Anticode Bound}, is hard to compute since it might require analyzing almost all subspaces of $V$. For this reason we consider the following relaxed bound for projective metrics.

\begin{lemma}[Relaxed Code-Anticode Bound for Projective Metrics]\label{lem:Relaxed Code-Anticode Bound for Projective Metrics}
  Let \(d_{\cF}\) be a projective metric and \(t \in \bN\). If \(\cC \subset V\) is  a linear code with minimum distance \(d_\F(\cC) \geq t\), then
    \begin{equation}
        |\cC| \leq \frac{|V|}{max\{|\cA| \mid \cA \text{ is a (t-1) linear anticode with basis in }\cF \}} \label{eq:Relaxed Code-Anticode Bound for Projective Metrics}
    \end{equation}
\end{lemma}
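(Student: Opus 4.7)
The plan is to deduce this statement as an immediate corollary of the general Code-Anticode Bound (Lemma~\ref{lem:Linear Code-Anticode Bound}), exploiting the fact that the relaxed version simply restricts the class of anticodes appearing in the denominator. Since any projective metric $d_\cF$ is translation-invariant, Lemma~\ref{lem:Linear Code-Anticode Bound} applies directly and yields
$$
|\cC| \;\leq\; \frac{|V|}{M}, \qquad M := \max\{|\cA| \,\mid\, \cA \text{ is a } (t-1) \text{ linear anticode}\}.
$$

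The key observation I would make is that every linear $(t-1)$-anticode $\cA$ admitting a basis contained in $\cF$ is, \emph{a fortiori}, a linear $(t-1)$-anticode in the sense of Lemma~\ref{lem:Linear Code-Anticode Bound}: the conditions ``linear subspace of $V$'' and ``$d_\cF(x,y)\leq t-1$ for all $x,y\in\cA$'' are identical in the two definitions, and the extra requirement on the basis is simply a further restriction. Consequently, the family of anticodes ranged over in the denominator of \eqref{eq:Relaxed Code-Anticode Bound for Projective Metrics} is a subset of the family ranged over in the denominator of \eqref{eq:Linear Code-Anticode Bound}, so
$$
M_\cF := \max\{|\cA| \,\mid\, \cA \text{ is a } (t-1) \text{ linear anticode with basis in }\cF\} \;\leq\; M.
$$
Combining the two inequalities gives $|\cC| \leq |V|/M \leq |V|/M_\cF$, which is exactly the claimed relaxed bound.

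The only minor point worth checking is that $M_\cF$ is well defined and positive, so that the right-hand side makes sense. This holds trivially: the zero subspace $\{0\}$ is a linear $0$-anticode, hence a $(t-1)$-anticode for every $t\geq 1$, and it has the empty set as a basis, which is vacuously contained in $\cF$. There is no substantive obstacle here; the content of the statement does not lie in the proof (which is pure relaxation) but rather in the observation, emphasized just before the lemma, that $M_\cF$ is combinatorially tractable because one only needs to range over subsets of the distinguished family $\cF$, whereas computing $M$ in general requires scanning essentially all subspaces of $V$.
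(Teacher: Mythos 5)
Your proposal is correct and matches the paper's (implicit) treatment: the relaxed bound is obtained from Lemma~\ref{lem:Linear Code-Anticode Bound} simply by restricting the maximization in the denominator to anticodes with basis in $\cF$, which can only decrease the denominator and hence weaken the bound. The paper leaves this deduction unstated, and your observation that the relaxed family of anticodes is non-empty (so the denominator is positive) is a reasonable extra check.
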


The denominator of the bound \eqref{eq:Relaxed Code-Anticode Bound for Projective Metrics} is equal to \(q^{\mu_{\cF}(t-1)}\), where \(\mu_{\cF}\) is defined as:\[\mu_{\cF}(t)\coloneqq max\{dim(\cA) \mid \cA \text{ is a  linear t-anticode with basis in }\cF \}\] is easier to compute and we show some examples where a closed-form expression can be found. First we consider the following equivalent definition of \(\mu_{\cF}\)

\begin{observation}
Let $t \in [N]$.  $\mu_{\cF}(t)$ coincides with the maximum cardinality of a subset $\G \subseteq \cF$ satisfying
\begin{enumerate}
    \item All $f_i \in \G$ are linear independent from each other over $\Ff$;
    \item All $v \in \langle \G \rangle$ have $\wt_\cF(v) \leq t$.
\end{enumerate}
\end{observation}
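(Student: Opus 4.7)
The plan is to show the equality by exhibiting a natural bijective correspondence between linear $t$-anticodes with basis in $\cF$ and linearly independent subsets $\G \subseteq \cF$ whose span contains only vectors of weight at most $t$.

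The first preliminary observation I would make is that, since $d_\cF$ is translation-invariant and any linear subspace $\cA \subseteq V$ contains $0$, the condition that $\cA$ is a $t$-anticode is equivalent to $\wt_\cF(v) \leq t$ for every $v \in \cA$. Indeed, for any $x, y \in \cA$ one has $d_\cF(x,y) = \wt_\cF(y-x)$ and $y-x \in \cA$, so bounding all weights in $\cA$ by $t$ is equivalent to bounding all pairwise distances by $t$.

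For the inequality $\mu_\cF(t) \leq \max|\G|$ where the maximum runs over subsets satisfying (1)--(2), I would take an optimal $\cA$ realizing $\dim(\cA) = \mu_\cF(t)$ together with a basis $\G \subseteq \cF$ of $\cA$ (which exists by hypothesis). Then $\G$ is linearly independent of cardinality $\dim(\cA)$, and by the preliminary observation every $v \in \langle \G \rangle = \cA$ satisfies $\wt_\cF(v) \leq t$. Hence $\G$ satisfies (1) and (2), giving the desired inequality.

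For the reverse inequality, I would take $\G \subseteq \cF$ of maximum cardinality satisfying (1) and (2), and set $\cA := \langle \G \rangle$. By (1), $\G$ is a basis of $\cA$ with $\G \subseteq \cF$, so $\cA$ admits a basis in $\cF$ of size $|\G|$; by (2) and the preliminary observation, $\cA$ is a linear $t$-anticode. Therefore $\mu_\cF(t) \geq \dim(\cA) = |\G|$, concluding the proof.

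The argument is essentially a tautological reformulation, so there is no serious obstacle; the only point worth being careful about is the equivalence between the anticode condition (pairwise distances bounded by $t$) and the weight condition on elements of a linear subspace, which relies on translation invariance of $d_\cF$.
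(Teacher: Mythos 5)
Your proposal is correct and follows essentially the same route as the paper: the paper's proof is just the one-line remark that, by translation invariance, a linear subspace $\cA$ is a $t$-anticode if and only if $\cA \subseteq \BF_t(0)$, from which the equivalence of the two definitions is immediate. You simply spell out the resulting two inequalities (passing between an anticode with basis in $\cF$ and a linearly independent subset $\G \subseteq \cF$) in more detail.
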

\begin{proof}
    Since \(d_{\cF}(x,y) = d_{\cF}(0,x-y) = \wt_{\cF}(x-y)\). A linear code \(\cA\) is a \(t\)-anticode if and only if \(\cA \subset B^{\cF}_t\). From this follows the equivalence of the two definitions.
    
\end{proof}
 
\(\mu_{\cF}(t)\) can be seen as the dimension of the larger subspace generated by elements in \(\cF\) contained in the ball of radius \(t\).
In particular bound \eqref{eq:Linear Code-Anticode Bound} and bound \eqref{eq:Relaxed Code-Anticode Bound for Projective Metrics} do not coincide exactly when the largest subspace \(\cA\) contained in the ball of radius \(t\), is not generated by elements in \(\cF\). The justification on why this does not \onequote\onequote generally" happen is that we are always guaranteed to have a subspace of dimension \(t\) contained in the ball of size \(t\) generated by elements in \(\cF\) (we only need to take the span of \(t\) linearly indipendent vectors in \(\cF\)). But this is not the case for subspaces of dimension \(t\) or larger which are not generated by any subset of elements in \(\cF\). Thus is would seem that subspaces generated by elements in \(\cF\) make good candidates for being the larger subspaces contained in the ball of size \(t\). We observe that in various classical metrics, such as Hamming metric and Rank metric these two bounds coincide. Although it must be noted that in general the two bounds are not equal.
\begin{example}
We give an example where bounds \eqref{eq:Linear Code-Anticode Bound} \eqref{eq:Relaxed Code-Anticode Bound for Projective Metrics} differ. In particular we construct a projective metric in which exists a \(3\) dimensional \(2\)-anticode which is not generated by elements in \(\cF\) but there doesn't exist any linear \(3\) dimensional \(2\)-anticode generated by elements of \(\cF\).
  Consider the spanning family \(\cF = \{f_1,\ldots, f_{14}\} \subset \bF_2^{10}\) where \(f_i = e_i\) for \(i = 1,\ldots, 10\) and

  \[
f_{11} = \begin{pmatrix}
1 \\
1 \\
1 \\
1 \\
0 \\
0 \\
1 \\
0 \\
0 \\
0 \\
\end{pmatrix}, \quad
f_{12} = \begin{pmatrix}
1 \\
1 \\
0 \\
0 \\
1 \\
1 \\
0 \\
1 \\
0 \\
0 \\
\end{pmatrix}, \quad
f_{13} = \begin{pmatrix}
0 \\
0 \\
1 \\
1 \\
1 \\
1 \\
0 \\
0 \\
1 \\
0 \\
\end{pmatrix}, \quad
f_{14} = \begin{pmatrix}
1 \\
1 \\
1 \\
1 \\
1 \\
1 \\
0 \\
0 \\
0 \\
1 \\
\end{pmatrix}
\]

Consider then the set of linearly independent vectors \(\cG \coloneqq \{v_1 = e_1+e_2, v_2 = e_3+e_4, v_3 = e_5+e_6\} \not\subset \cF\). It can be easily seen that \(\langle \cG \rangle \subset B^{\cF}_2\). However, the span of any subset of three elements of \(\cF\) is not contained in \(B^{\cF}_2\). This follows from the fact that the minimum distance of the parent code \(\pc\) is equal to \(\dH(\pc) = 6\). Let \(u_1, u_2, u_3 \in \mathcal{F}\). Consider \(v := u_1 + u_2 + u_3\); if this has \(\mathcal{F}\)-weight two, then there exist \(u_4, u_5 \in \mathcal{F}\) such that \(v = u_4 + u_5\). At least one of \(u_4\) and \(u_5\) is different from \(u_i, \; i \leq 3\). Thus the minimum distance $\mathcal{C} = ker(F)$ is smaller than $5$ and this gives a contradiction. The minimum distance of the parent code was computed algorithmically. The code can be found in \cite{projective_git}.

\hspace{1cm}

\textbf{General construction:}, let \(\cG < V \coloneqq \bF_q^N\) be a subspace dimension \(n_g \geq 3\) with minimum Hamming weight of \(\cG \) strictly larger than \(3\). Then consider the vector space \(W \coloneqq V \times \bF_q^{\cG}\). Consider the spanning family \(\cF \coloneqq \{e_1,\ldots,e_N,e_g, f_g \coloneqq e_g + g\}_{g \in \cG}\). We can easily see that \(\cG \subset \cB^{\cF}_2\). Next we check that no subset of independent vectors of \(\cF' \subset \cF\) of cardinality \(n_g\) has span \(\langle \cF'\rangle \subset \cB_2^{\cF}\). If such \(\cF'\) existed, than let \(f_1,f_2,f_3 \in \cF'\) be distinct elements of \(\cF'\), since \(f_1 + f_2 + f_3 \in \cB^{\cF}_2\), this implies that \(d(\pc_{\cF}) \leq 5\). 
But \(d(\pc_{\cF})\) is the cardinality of the smallest subset of linearly dependent vectors of \(\cF\). 
So let \(\sum_{g \in \cG} \left(\lambda_g f_g + \nu_ge_g\right) + \sum_{i=1}^N\mu_ie_i =0\) be the corresponding non trivial linear combination. From the definition of \(f_g\) it follows that \(\lambda_g = \nu_g\) for all \(g \in \cG\) and that \(\sum_{i=1}^N\mu_ie_i \in \cG\). Thus we have that \(d(\pc_{\cG})= \min_{g \in \cG\withoutzero}\left(\wtH(g) + 2\wt_{\cG}(g)\right) = d(\cG) + 2 > 5\) by hypothesis on the minimum distance of \(\cG\). This gives a contraddiction. Thus we have a subspace of dimension \(n_g\) of maximum weight \(2\) but no subspace generated by elements of \(\cF\) of dimension larger than 2 and with maximum weight \(2\). 
\end{example}

Let \(\cF\) be a spanning family on \(V = \bF_q^N\).

\begin{definition}
Let $t \in [N]$. We define $\mu_{\cF}(t)$ as the maximum cardinality of a subset $\G \subseteq \cF$ satisfying
\begin{enumerate}
    \item All $f_i \in \G$ are linear independent from each other over $\Ff$;
    \item All $v \in \langle \G \rangle$ have $\wt_\cF(v) \leq t$.
\end{enumerate}
\end{definition}

\textbf{Properties:}\\
Note that for  $t \in [N]$, we have (easy to check)
\begin{enumerate}
    
\item $\mu_{\cF}(t) \geq t$
\item (Strict growth of \(\mu_{\cF}(\cdot)\)) If $\mu_{\cF}(t) < N$ then $\mu_{\cF}(t+1) > \mu_{\cF}(t)$ \label{eq: strict growth of mu}
\item $\mu_{\cF}(t) \geq \mu_{\cF'}(t)$ for all \(\cF' \subset \cF\)
\item $\mu_{\cF}(t) = N$ iff $t \geq \max\{\wt_{\cF}(v) \,|\, v \in V\}$

 \begin{proof}
     (\ref{eq: strict growth of mu}) Consider $\cG \subset \cF$ such that \(|\cG|= \mu_{\cF}(t)\) and \(\Span(\cG) \subset B_{t}^{\cF}\), since \(\Span{\cG} \neq V\), there exists $f \in \cF$ such that $\cG' := \cG \cup \{f\}$ is linearly independent and $\Span \cG' = \Span \cG + \langle f \rangle \subset \cB_{t+1}^{\cF}$ thus $\mu_{\cF}(t+1) \geq \mu_{\cF}(t) + 1$.
 \end{proof}
\end{enumerate}

\begin{example}
Let $e_1,\ldots,e_N$ be the canonical basis of \(V\). Then the spanning family $\cF = \{e_1,\ldots,e_N\}$ will define a Hamming metric $d_\cF$ on $V$, and  for  all $t \in [N]$, we have 
\[
\mu_{\cF}(t) = t.
\]
\begin{proof}

We know that \(\mu_\cF(t) \geq t\) for Property 1. Further more for every \(\cG \subset \cF\) of cardinality \(t+1\), the element \(x = \sum_{g \in \cG} g \in \Span(\cG)\) has \(\wt_{\cF}(x) = t+1\). Thus \(\mu_\cF(t) < t + 1\) and this concludes.
\end{proof}
\end{example}

\begin{remark}
Let $V = \Ff^N_q$ with standard basis vectors $e_1,\ldots,e_N$. Then the spanning family $\cF = \{e_1,\ldots,e_N, \bm{1}=\sum_{i=1}^N e_i\}$ will define the phase rotation metric $d_\cF$ on $V$.
For all $x$ in $\bF_q^{N}$ we have

\begin{equation} 
\label{eq: phase weight}
\wt_{\cF}(x) = \min_{c \in \bF_q}(w_H(x), w_H(x-c\mathbf{1})+1)
\end{equation}

\begin{equation}
\label{eq: phase weight upper bound}
        \wt_{\cF}(x) \leq \lceil N - \frac{N}{q}\rceil 
\end{equation}

\end{remark}
\begin{proof}
    \eqref{eq: phase weight upper bound} For all $x \in V$ exists $c \in \bF_q$ such that $|\{i \in \{1,\ldots,N\}\mid x_i = c\} |\geq \lceil \frac{N}{q} \rceil$. Thus by\ \eqref{eq: phase weight} we have 
    \[
    \wt_{\cF}(x) \leq w_H(x-c\bfone ) +1 = N - |\{i \in \{1,\ldots,N\}\mid x_i = c\} | +1 \leq N -\lceil \frac{N}{q}\rceil+1
    \] which is equal to $\lceil N - \frac{N}{q}\rceil$ when $q \nmid N$. Otherwise if \(q \mid N\) it can be seen that the element in $V$ with the largest weight is $x$ having $ \frac{N}{q} $ coordinates equal to $c \in \bF$ for each $c$ in $\bF$ and thus $\wt_{\cF}(x) \leq N - \frac{N}{q}= \lceil N - \frac{N}{q}\rceil$.
\end{proof}

\begin{example}
Let \(\cF:=\{e_1,\ldots, e_N, \mathbf{1}\}\) be the spanning family inducing the phase rotation metric, then for all $t \in [N]$ we have 
\[
\mu_{\cF}(t) = \begin{cases}
t & \text{ if } t < \lceil N - \frac{N}{q}\rceil\\
N & \text{ otherwise }   \\
\end{cases}
\]

\begin{proof}
By Equation \eqref{eq: phase weight upper bound} for all $t \geq \lceil N - \frac{N}{q} \rceil $ we have $B_t^{\cF}= V$  and thus \(\mu_\cF(t) = N\).

\par    We now consider the case \(t < \lceil N - \frac{N}{q} \rceil =: \bar n\). From equation \eqref{eq: strict growth of mu} follows that if we have \(\mu_{\cF}(\bar n-1) = \bar n-1 \) then $\mu_{\cF}(t) = t$  for all $t \leq \bar n-1$. We do this by showing that for all $\cG \subset \cF$ linearly independent of cardinality $|\cG| = \bar n $ then \(\Span(\cG) \nsubseteq B^{\cF}_{\bar n-1}\).
    If \(\cG = \{e_1,e_2,\ldots,e_{\bar n}\}\), we construct $x \in \Span(\cG)$ such that $x \notin B^{\cF}_{\bar n-1}$. Let $k := \lceil \frac{\bar n}{q - 1} \rceil$ and \(\bF_q \withoutzero =\{p_1,\ldots,p_{q-1}\} \). we define $a_{kj + i} := p_j$ for all $i = 0,\ldots , k-1$ and $j = 1,\ldots,q-1$. We define \( x := \sum_{i =1}^{\bar n}a_i e_i \in \Span(\cG)\). Then $\wt_{\cF}(x) = \min_{c \in \bF_q}\{w_H(x), w_{H}(x-c\mathbf{1})+1\}$. If $\wt_{\cF}(x) = w_H(x) = \bar n$ then $x \notin B^{\cF}_{\bar n-1}$ and this concludes. By definition of $x$ we have that \(\min_{c \in \bF_q}\{ w_H{x-c\mathbf{1}}+1\} \geq N-k+1\), we want to show that $N-k+1 \geq \bar n $ since this would imply \( \min_{c \in \bF_q}\{ w_H{x-c\mathbf{1}}+1\} \geq \bar n \) and thus $\wt_{\cF}(x) = w_H(x) = \bar n$. This is true since:
    \[ \bar n + k = \lceil N - \frac{N}{q}\rceil + \lceil \frac{\bar n}{q-1}\rceil \leq N - \lfloor \frac{N}{q}\rfloor + \lceil \frac{\lceil N \frac{q-1}{q}\rceil}{q-1}\rceil = N - \lfloor \frac{N}{q}\rfloor + \lceil\frac{N}{q} \rceil = N \leq N + 1
    \]
    Where the second equality comes from the nested division propriety of the ceil function.
     We need to consider all the other possible linearly indipendent \(\cG' \subset \cF\) of cardinality $\bar n$.  <
     If $\cG'$ is comprised of vectors of the canonical base $\cB \subset \cF$ then there is a linear isomorphism \(L\) such that \(L(\cF) = \cF\) and \(L(\cG) = \cG'\). In particular \(L\) is a projective isometry, that is it preserves the \(\cF\)weight:\(\wt_\cF(L(x)) = \wt_\cF(x)\) for all \(x\) in \(V\). Thus $\Span(\cG') \subset B^{\cF}_{\bar n}$ if and only if $\cG \subset B^{\cF}_{\bar n}$ which we have seen is false. It only rests the case  $\cG'= \{e_1,\ldots,e_{\bar n-1},\bf1\}$. The same argument can applied with the linear isomorphism defined by $F(e_i) = e_i$ and \(F(e_{\bar n} = \bf1)\). Since again \(L(\cF)=\cF\) and thus \(L\) is a projective isometry.

\end{proof}

\end{example}

\begin{example}
Let $V = \Ff^{m \times n}$ be the space of $m \times n$ matrices. Then the spanning family $\cF = \{\text{rank 1 matrices in }V\}$ will define a rank metric $d_\cF$ on $V$, and  for  all $t \in [N]$ with $N = m n$ we have 
\[
\mu_{\cF}(t) = \begin{cases}
t \cdot \max\{m,n\} & \text{ if } t \leq \min\{m,n\}\\
N & \text{ if } t \geq \min\{m,n\}\\
\end{cases}
\]
This is because the largest code supported by only $t$ columns or only $t$ rows will have dimension $tm$ or $tn$ respectively.
\end{example}

\begin{theorem}[\textbf{General Singleton-type bound}] \label{thm: gen singleton}
Let $V$ be an $N$-dimensional vector space over $\Ff_q$.
Let $\C \subseteq V$ be a code and let $d = d_\cF(\C)$. Then
\[
|\C| \leq q^{N - \mu_{\cF}(d-1)} \leq q^{N-d+1}
\]
\end{theorem}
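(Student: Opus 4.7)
The plan is to apply the Relaxed Code-Anticode Bound of Lemma \ref{lem:Relaxed Code-Anticode Bound for Projective Metrics} directly to $\C$ with $t = d$. Since $d = d_\cF(\C)$, the lemma yields
\[
|\C| \ \leq \ \frac{|V|}{\max\{|\cA| \ : \ \cA \text{ is a }(d-1)\text{-linear anticode with basis in }\cF\}}.
\]
Combined with $|V| = q^N$, the first inequality in the theorem reduces to identifying the denominator as $q^{\mu_{\cF}(d-1)}$.

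To make this identification, I would invoke the Observation stated immediately after the definition of $\mu_\cF$: a linear $(d-1)$-anticode with basis in $\cF$ is, by definition, a subspace of the form $\langle \G \rangle$ for some $\G \subseteq \cF$ whose elements are linearly independent over $\Ff_q$ and all of whose vectors lie in $\cB^\cF_{d-1}$. Maximising the cardinality of such a $\G$ gives $\mu_\cF(d-1)$, so the maximal cardinality of such an anticode is $q^{\mu_\cF(d-1)}$. Substituting into the code-anticode bound yields the first desired inequality
\[
|\C| \ \leq \ q^{N - \mu_{\cF}(d-1)}.
\]

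The second inequality $q^{N-\mu_\cF(d-1)} \leq q^{N-d+1}$ is then immediate from the elementary property $\mu_\cF(t) \geq t$ (Property 1 in the list following the definition of $\mu_\cF$), applied with $t = d-1$, which gives $N - \mu_\cF(d-1) \leq N - d + 1$.

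I do not expect any serious obstacles, since the statement is essentially a packaging of the Code-Anticode Bound together with the definition of $\mu_\cF$. The only small points worth double-checking are the corner cases $d \leq 1$ (where $\C$ is either a single point or has trivial bound, with the convention $\mu_\cF(0)=0$) and the verification that the minimum distance $d$ is indeed realized by some pair so that translating to anticodes of diameter $d-1$ is legitimate; both follow from translation invariance of $d_\cF$ exactly as in the proof of Lemma \ref{lem:Linear Code-Anticode Bound}.
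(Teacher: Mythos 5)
Your argument is correct and follows exactly the route the paper sets up in the surrounding text (the relaxed code--anticode bound plus the identification of the maximal linear anticode with basis in $\cF$ as a span $\langle \G \rangle$ of dimension $\mu_{\cF}(d-1)$), but the paper's actual proof of the theorem does not cite those lemmas: it inlines the argument by extending $\G$ to a basis of $V$ and showing that the projection of $\C$ onto the $N-\mu_{\cF}(d-1)$ complementary coordinates is injective, since two codewords with equal projection differ by an element of $\langle \G \rangle$ and hence lie at $\cF$-distance at most $d-1 < d$. The inlined version buys two things. First, it is self-contained: Lemmas \ref{lem:Linear Code-Anticode Bound} and \ref{lem:Relaxed Code-Anticode Bound for Projective Metrics} are stated in the paper without proof, so appealing to them leaves the substantive step unargued. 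Second, those lemmas are stated only for \emph{linear} codes $\C$, whereas the theorem assumes only $\C \subseteq V$; the projection argument (equivalently, the injectivity of the map $(c,a)\mapsto c+a$ on $\C\times\cA$, valid for any translation-invariant metric and arbitrary $\C$) covers nonlinear codes as well. To make your route airtight you should either prove the code--anticode inequality for general $\C$ or observe that the projection argument does precisely that. Your treatment of the second inequality via $\mu_{\cF}(t)\geq t$ matches the paper exactly, and the corner cases you flag are as harmless as you suspect.
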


\begin{proof}
Write $\mu := \mu_{\cF}(d-1)$ and let $\G =\{g_1, \ldots, g_\mu\} \subseteq \cF$ be a subset of size $\mu$ satisfying
\begin{enumerate}
    \item All $g_i \in \G$ are linear independent from each other over $\Ff$;
    \item All $v \in \langle \G \rangle$ have $\wt_\cF(v) \leq d-1$.
\end{enumerate}
We can extend $G$ to a basis $\mathcal{B}$ for $V$:
\[
\mathcal{B} =\{g_1, \ldots, g_\mu, g_{\mu+1}, \ldots, g_{N}\}
\]
by picking vectors $g_{\mu+1}, \ldots, g_{N}$ that are  linearly independent from each other and from $\mathcal{G}$. We claim that the projection 
\[
\operatorname{Proj}: \C \to \Ff_q^{N - \mu}
\]
of a codeword $\sum_{i=1}c_i g_i \in \C$ onto its last $N-\mu$ coordinates $(c_{\mu+1}, c_{\mu + 2},  \ldots, c_{N}$)
is injective, thereby proving the theorem.

Let $a,b \in \C$ and suppose $\operatorname{Proj}(a) = \operatorname{Proj}(b)$. Then $\operatorname{Proj}(a-b) = 0$, so $a - b \in \langle \G \rangle$. But this means that
\[
d_\cF(a,b) = \wt_\cF(a - b) \leq d - 1 < d,
\]
so $a = b$.\end{proof}

\begin{example}
Let $V = \Ff^{m \times n}$ be the space of $m \times n$ matrices. Then the spanning family $\cF = \{\text{rank 1 matrices in }V\}$ will define a rank metric $d_\cF$ on $V$. The bound from Theorem \ref{thm: gen singleton} implies that any code $\C$ with $d_\cF(\C) = d$ will satisfy
\[
|\C| \leq q^{mn - (d-1)\max\{m,n\}} = q^{\max\{m,n\}(\min\{m,n\} - d + 1)}.
\]
This coincides with the usual Singleton-like bound for the rank-metric.
\end{example}

\section{Conclusion}\label{sec:conclusion}

In this work, we developed a comprehensive framework for \emph{projective metrics} or, equivalently, for integral convex scale–translation–invariant metrics on finite–dimensional vector spaces over finite fields.  
Starting from the notion of \emph{quotient weights}, we have shown several elementary tools and used them to streamline many proofs and existing constructions.  This provided the technical backbone for all subsequent sections.

\medskip\noindent
The main contributions can be summarized as follows.
\begin{itemize}
  \item We established new fundamental properties of projective metrics and showed that they coincide with the class of integral, convex, scale-translation-invariant metrics.  
  \item We exhibited an explicit bijection between isomorphism classes of projective metrics on a space~$V$ and Hamming–equivalence classes of linear subspaces (``parent codes’’) of~$\Ff_q^N$, thereby characterizing their full isometry groups.  
  \item We proved that \emph{every} scale–invariant metric embeds isometrically into a suitable projective metric, revealing projective metrics as natural ambient spaces for the study of general translation– and scale–invariant codes.  
  \item We analyzed the behavior of sphere sizes under metric constructions such as disjoint union and, via the notion of \emph{extended spanning families}, showed that the associated matroid completely determines the volume of a ball.  
  \item We derived a Singleton–type bound for projective metrics by adapting the code–anticode technique to the present setting.
\end{itemize}

Beyond these theoretical advances, we supplied numerous examples, counterexamples, and a \textsc{SageMath} script that automates many of the computations introduced in the paper.

\medskip\noindent
\textbf{Future directions.}  

\medskip\noindent
We hope that the unifying perspective offered and the tools developed in this paper will serve as a foundation for the study of projective metrics, further bridging the research on different metrics, and more broadly between coding theory, graph theory, matroid theory, and finite geometry.

\newpage
\renewcommand{\thesubsection}{\Alph{subsection}}
\renewcommand{\thetheorem}{\Alph{subsection}.\arabic{theorem}}
\renewcommand{\thedefinition}{\Alph{subsection}.\arabic{definition}}
\renewcommand{\theexample}{\Alph{subsection}.\arabic{example}}
\renewcommand{\theproposition}{\Alph{subsection}.\arabic{proposition}}
\renewcommand{\theobservation}{\Alph{subsection}.\arabic{observation}}

\counterwithin{proposition}{subsection}
\counterwithin{definition}{subsection}

\begin{appendices}

\subsection{Quotient weights}
\label{appendix: quotient weights}
\quotientweightproperties*

{\itshape
\begin{enumerate}
    \item \textbf{(Maximality)} The quotient weight \(\wt_{\operatorname{quot},\varphi}\) is the largest weight on \(Y\) making \(\varphi\) a contraction: for any other weight \(\wt_Y\) such that \(\varphi: (X,\wt_X) \to (Y,\wt_Y)\) is a contraction, the identity map \(\operatorname{id}: (Y, \wt_{\operatorname{quot},\varphi}) \to (Y, \wt_Y)\) is a contraction.

    \item \textbf{(Universal Property)} If \(f: (X, \wt_X) \to (Z, \wt_Z)\) is a contraction with \(\ker(\varphi) \subseteq \ker(f)\), then there exists a unique contraction \(g: (Y, \wt_{\operatorname{quot},\varphi}) \to (Z, \wt_Z)\) such that \(f = g \circ \varphi\). Moreover,
    \[
    \ker(g) = \varphi(\ker(f)) \quad \text{and} \quad \wt_{\operatorname{quot},f} = \wt_{\operatorname{quot},g}.
    \]
    This property is summarized in the following commuting diagram:
    \[
    \begin{tikzcd}
    (X, \wt_X) \arrow[r, "\varphi", two heads] \arrow[rd, "f"'] & (Y, \wt_{\operatorname{quot},\varphi}) \arrow[d, dashed, "\exists! \, g"] \\
    & (Z, \wt_Z)
    \end{tikzcd}
    \]
    \item (\textbf{Isometry Property}) If \(\ker(\varphi) = \ker(f)\) and \(f\) is surjective, then \(g: (Y, \Qwt{\varphi}) \to (Y, \Qwt{f})\) as defined in (2) is an isometry. 
\end{enumerate}
}

\begin{proof}
Since statement (1) is a special case of (2) with \(Z = Y\) and \(f = \varphi\), it suffices to prove (2).

By the universal property of vector space quotients, there exists a unique linear map \(g: Y \to Z\) such that \(f = g \circ \varphi\). It remains to show that \(g\) is a contraction with respect to \(\wt_{\operatorname{quot},\varphi}\).

Let \(y \in Y\). By definition of the quotient weight, there exists \(x \in \varphi^{-1}(y)\) such that
\[
\wt_{\operatorname{quot},\varphi}(y) = \wt_X(x).
\]
Then, using the fact that \(f = g \circ \varphi\),
\[
\wt_Z(g(y)) = \wt_Z(f(x)) \leq \wt_X(x) = \wt_{\operatorname{quot},\varphi}(y),
\]
which shows that \(g\) is a contraction.

Now we prove that \(\ker(g) = \varphi(\ker(f))\). Since \(f = g \circ \varphi\), if \(x \in \ker(f)\), then \(g(\varphi(x)) = f(x) = 0\), so \(\varphi(x) \in \ker(g)\). Thus, \(\varphi(\ker(f)) \subseteq \ker(g)\).

For the reverse inclusion, let \(y \in \ker(g)\), so \(g(y) = 0\). Since \(\varphi\) is surjective, there exists \(x \in X\) such that \(y = \varphi(x)\), and hence \(f(x) = g(\varphi(x)) = 0\), i.e., \(x \in \ker(f)\). Therefore, \(y = \varphi(x) \in \varphi(\ker(f))\), and so \(\ker(g) \subseteq \varphi(\ker(f))\). We conclude \(\ker(g) = \varphi(\ker(f))\).

Finally, we show that the quotient weights \(\wt_{\operatorname{quot},f}\) and \(\wt_{\operatorname{quot},g}\) are equal.

Let \(z \in Z\). If \(z \notin \operatorname{Im}(f) = \operatorname{Im}(g)\), then both \(\wt_{\operatorname{quot},f}(z)\) and \(\wt_{\operatorname{quot},g}(z)\) are defined as \(+\infty\), so equality holds.

Now suppose \(z \in \operatorname{Im}(f)\). Let \(x \in X\) be such that \(f(x) = z\), and let \(y = \varphi(x)\), so \(g(y) = z\). Then:
\[
\wt_{\operatorname{quot},f}(z) = \min_{x' \in f^{-1}(z)} \wt_X(x').
\]
and by definition of \(\Qwt{g}\) and \(\wt_{\operatorname{quot},\varphi}\), we have:
\[
\wt_{\operatorname{quot},g}(z) = \min_{y' \in g^{-1}(z)} \wt_{\operatorname{quot},\varphi}(y') = \min_{y' \in g^{-1}(z)} \min_{x' \in \varphi^{-1}(y')}\wt_{X}(x')=\min_{x' \in \varphi^{-1}\left(g^{-1}(z)\right)} \wt_X(x').
\]
Since \(f= g \circ \varphi\), we have \(f^{-1}(z) = (g \circ \varphi)^{-1}(z) = \varphi^{-1} \left( g^{-1}(z)\right)\), thus
\[
\wt_{\operatorname{quot},f}(z) = \wt_{\operatorname{quot},g}(z).
\]
This completes the proof for (2). We prove (3). Since \(f\) is surjective, also \(g\) is surjective. Since \(\ker(f) = \ker(\varphi)\), since \( \ker(g) = \varphi(\ker(f)) = 0\), \(g\) is a bijection. 
Lastly, \(g\) is an isometry since for every \(y \in Y\):
\[
\Qwt{\varphi}(y)= \min_{x' \in \varphi^{-1}(y)}\wt_X(x') = \min_{x' \in f^{-1}(g(y))}\wt_X(x) = \Qwt{f}(g(y)).
\]
Where the second equality follows from \(f = g \circ \varphi\) and \(\ker(f) = \ker(\varphi)\).
\end{proof}

\subsection{Linear equivalence and projective metrics}
\label{appendix: linear equivalence}
\propProjectiveMetricsEquivalence*

\begin{proof} Let $\cF= \{\spn{f_1},\ldots,\spn{f_\N}\}$.\\
``$\Leftarrow$'' \,
Suppose $\wt(\cdot)=\wt_{L(\cF)}(\cdot)$ for some $L \in \GL(V)$. Let $x \in V$. Then for any $I \subseteq [\N]$ we have $x \in \sum_{i \in I}\spn{f_i}$ if and only if $L(x) \in \sum_{i \in I}L(\spn{f_i})$. Therefore $\wt_\cF(x) = \wt_{L(\cF)}(L(x)) = \wt(L(x))$, and thus $\wt_\cF(\cdot) \simeq \wt(\cdot)$.\\
``$\Rightarrow$''\, Suppose $\wt_\cF(\cdot) \simeq \wt(\cdot)$, i.e. $\wt_\cF(\cdot) = \wt(L(\cdot))$ for some $L \in \GL(V)$.  Let $y \in V$. Then for any $I \subseteq [\N]$ we have $y \in \sum_{i \in I}L(\spn{f_i})$ if and only if $L^{-1}(y) \in \sum_{i \in I}\spn{f_i}$. Therefore $\wt_{L(\cF)}(y)  = \wt_{\cF}(L^{-1}(y)) = \wt(y)$, and thus $\wt_{L(\cF)}(\cdot)  = \wt(\cdot)$.
\end{proof}

\subsection{Weight functions generated by families of subspaces}
\label{appendix: F-metrics}
Gabidulin and Simonis first introduced the concept of weight functions generated by families of subspaces in \cite{gabidulin1998metrics}. In their work, they considered projective metrics as a special case of metrics generated by families of subspaces. In this section, we demonstrate that these two notions are, in fact, equivalent.


\hspace{1cm}

\begin{remark}
For any subspace $U \leq V$, we can consider $\Gr_k(U)$ as a subset of $\Gr_k(V)$, since any $k$-dimensional subspace of $U$ is also a $k$-dimensional subspace of $V$. This is needed for the following proposition.
\end{remark}

\propProjectivePointForm*

\begin{proof}
The fact that \(\ppf(\cF)\) spans the same subspace of \(\cF\) is obvious.
Let $x \in \langle F \mid F \in \cF \rangle$ be any vector and write $\cF= \{F_1,\ldots,F_\N\} \subseteq \mathcal{P}(V)$. Suppose $I \subseteq [\N]$ is  a set of minimum cardinality that satisfies
$x \in \sum_{i \in I}\spn{F_i}.$
This means that $x$ can be written as a sum $x = \sum_{i \in I}f_i$
for certain non-zero vectors $f_i \in \spn{F_i}$. This implies
$x \in \sum_{i \in I}\spn{f_i}.$ 
Since each 1-dimensional subspace $\spn{f_i}$ is an element of $\Gr_1(\spn{F_i})$, and therefore of $\ppf(\cF)$,  we conclude that $\wt_{\cF}(x) = |I| \geq \wt_{\ppf(\cF)}(x)$.

Now write $\ppf(\cF) = \{G_1,\ldots,G_{\N\bm{'}}\} \subseteq  \Gr_1(V)$.  Suppose $J \subseteq [\N\bm{'}]$ is  a set of minimum cardinality that satisfies
$x \in \sum_{j \in J}G_j$. By definition of $\ppf(\cF)$ we can choose for every $G_j$ a basic set $F_j \in \cF$ such that $G_j \subseteq \spn{F_j}$ (some $F_j$'s chosen this way may coincide, but this is not a problem). It follows that $x \in \sum_{j \in J}\spn{F_j}$, and thus $\wt_{\cF}(x) \leq |J| = \wt_{\ppf(\cF)}(x)$.
\end{proof}

\subsection{Equivalence of Projective Metrics}\label{appendix: Equivalence of Projective Metrics}

\parentcodeEqProject*

\begin{proof}

First, we need to show that \(\Psi\) is well-defined. Specifically, if two projective metrics \(\wt_{\cF}\) and \(\wt_{\mathcal{F}'}\) are linear equivalent, then their parent codes are Hamming equivalent as well. 
By Proposition \ref{prop projective metrics equivalence} there exists a linear isomorphism \(L: V \rightarrow V\) such that \(L(\mathcal{F}) = \mathcal{F}'\).
Let \(\varphi\) be a parent function of \(\mathcal{F}\) and let $\cB := \{e_1,\cdots, e_{\N}\}\subset \Ff_q^{\N} $ the canonical base.
Since \(L (\varphi(\mathcal{B})) = L (\mathcal{F}) = \mathcal{F}'\), \(L \circ \varphi\) is a parent function of \(\mathcal{F}'\). 
Thus, \(\overline{\pc_{\mathcal{F}'}} = \overline{\text{ker}(L \circ \varphi)} = \overline{\text{ker}(\varphi)} = \overline{\pc_{\mathcal{F}}}\), confirming that \(\Psi\) is well-defined.
\par
To demonstrate that \(\Psi\) is injective, we proceed by contradiction. 
Let \(\bar \wt_{\cF} \neq \bar \wt_{\cF}\) be such that \(\overline{\pc_{\mathcal{F}}} = \Psi(\bar\wt_{\cF}) = \Psi(\bar\wt_{\mathcal{F}'}) = \overline{\pc_{\mathcal{F}'}}\). This implies the existence of two parent functions \(\varphi_{\mathcal{F}}\) and \(\varphi_{\mathcal{F}'}\) with the same kernel \(\pc = \pc_{\mathcal{F}} = \pc_{\mathcal{F}'}\). 
The associated quotient maps of these parent functions,
\(\overline{\varphi_{\mathcal{F}}}, \overline{\varphi_{\mathcal{F}'}}:\sfrac{\mathbb{F}_q^{\mathbb{N}}}{\pc} \to V\) act as isomorphisms from \(\sfrac{\mathbb{F}_q^{\mathbb{N}}}{\pc}\) to \(V\). 
Consequently, the function \(L = \overline{\varphi_{\mathcal{F}}} \circ \overline{\varphi_{\mathcal{F}'}}^{-1}\) from \(V\) to \(V\) is also an isomorphism. Given that \(\overline{\varphi_{\mathcal{F}}}([e_i]) = f_i\) and \(\overline{\varphi_{\mathcal{F}'}}([e_i]) = f'_i\), it follows that \(L(\mathcal{F}) = \mathcal{F}'\). 
This means \(\wt_{\cF}\) and \(w_{\mathcal{F'}}\) are linear equivalent, which gives a contradiction.
\par
Lastly, we show that $\Psi$ is surjective. Let $\bar{\pc} \in \bar{Gr}_{\N-N}(\mathbb{F}_q^{\N})$. We search for $\mathcal{F}$ such that $\Psi(\bar\wt_{\cF}) = \bar{\pc}$. 
The vector space $\mathbb{F}_q^{\N} / \pc$ has dimension $N$, so there exists an isomorphism $T: \mathbb{F}_q^{\N} / \pc \to V$.
Let $\pi: \mathbb{F}_q^{\N} \to \mathbb{F}_q^{\N} / \pc$ be the projection onto $\mathbb{F}_q^{\N} / \pc$, then define $\varphi := T \circ \pi$. The map $\varphi$ is surjective, and $\ker(\varphi) = \pc$. %
Consider $\mathcal{F} := \varphi(\mathcal{B})$, where $\mathcal{B}$ is the standard basis of $\mathbb{F}_q^{\N}$. 
We need to show that $\mathcal{F}$ is a spanning family for $V$ of size $\N$ (in general it could be a spanning family of size smaller than \(\N\) if its vectors are not pairwise independent). 
This would imply that its parent code is $\bar{\pc}$ and thus $\bar\cC = \Psi(\bar\wt_{\cF})$, proving that $\Psi$ is surjective.
To confirm that $\mathcal{F}$ is a spanning family of size $\N$, we need to show that its vectors are pairwise independent. 
Consider any non-trivial linear combination of vectors in $\mathcal{F}$:
\[
0 = \sum_{f \in \mathcal{F}} \lambda_f f = \sum_{i=1}^{\N} \lambda_{\varphi(e_i)} \varphi(e_i) = \varphi\left(\sum_{i=1}^{\N} \lambda_{\varphi(e_i)} e_i\right).
\]
This implies that $\sum_{i=1}^{\N} \lambda_{\varphi(e_i)} e_i \in \pc$. Since $\pc$ has a minimum weight greater than $3$, we have:
\[
\#\{f \in \mathcal{F} \mid \lambda_f \neq 0\} \geq 3,
\]
which means the vectors in $\mathcal{F}$ are pairwise independent.
\end{proof}
\subsection{Correction, detection and convexity}\label{appendix: correction, detection and convexity}

In this section we will introduce the notions of \textit{correction-capability} $\tau$ and the \textit{equal-detection threshold} $\sigma_{\operatorname{eq}}$. 
Suppose that from two distinct vectors $v_1,v_2 \in V$ one is selected (say $v$) and transmitted over a channel. 
The numbers $\tau,\sigma_{\operatorname{eq}}$ then indicate for which radii $\rho \in \Nn$, after receiving a vector $x \in V$ with $d(v,x) = \rho$, a minimum-distance decoder will always decode $x$ correctly to $v$, or if it could encounter a tie with $d(v_1,x) = d(v_2,x) = \rho$.  

Secondly, two notions of \textit{normality} are introduced that capture if $\tau$ and $\sigma_{\operatorname{eq}}$ only depend on the distance $d(v_1,v_2)$ in a natural way. Finally it is shown that a metric being normal for both notions is equivalent to being convex, and give two equivalent definitions of convexity.
\convexity*
\begin{proof}
Assume that property (1) holds for \(d\). Let \( v_1, v_2 \in V \) and consider a sequence 
\[
S = \{x_0 = v_1, x_1, \ldots, x_s = v_2\} \subset V
\]
such that \( d(x_k, x_{k+1}) = 1 \) for all \( k = 0, \ldots, s-1 \), and \( d(v_1, v_2) = s-1 \).

We aim to show that for all \( i \in \{0,1,\ldots, d(v_1,v_2)\} \), we have:
\[
d(v_1, x_i) = i \quad \text{and} \quad d(x_i, v_2) = d(v_1, v_2) - i.
\]

For the boundary cases: When \( i = 0 \), we have \( x_0 = v_1 \), so \( d(v_1, x_0) = d(v_1, v_1) = 0 \) and \( d(v_2, x_0) = d(v_2, v_1) - 0 \), which holds trivially. When \( i = d(v_1, v_2) = s-1 \), we have \( x_s = v_2 \), so \( d(v_1, x_s) = s-1 \) and \( d(x_s, v_2) = 0 \), which also holds trivially.

Now, consider the case where \( 0 < i < s-1 \). We first show that \( d(v_1, x_i) \leq i \). By applying the triangle inequality along the sequence:
\[
d(v_1, x_i) \leq \sum_{j=0}^{i-1} d(x_j, x_{j+1}) = i.
\]

Now, assume for contradiction that \( d(v_1, x_i) = n' < i \). Then, there exists a shorter sequence:
\[
S' = \{x'_0 = v_1, x'_1, \ldots, x'_{n'} = x_i\}
\]
such that \( d(x'_j, x'_{j+1}) = 1 \) for all \( j \). However, if we form the sequence \( S'' = S' \cup \{x_k\}_{k > i} \), we would obtain a sequence shorter than \( S \) connecting \( v_1 \) and \( v_2 \), contradicting the minimality assumption of \( S \). Thus, our assumption that \( d(v_1, x_i) < i \) must be false, meaning \( d(v_1, x_i) = i \).

Similarly, applying the same reasoning to \( d(x_i, v_2) \), we conclude \( d(x_i, v_2) = s-1 - i \), completing the proof. Now assume that (2) holds. Given any two elements \( v, w \in V \), we want to show that the minimum number of steps of distance 1 to move from \( v \) to \( w \) corresponds to the metric \( d(v,w) \).

We proceed by induction on \( d(v,w) \). Base case: If \( d(v,w) = 1 \), then there is a direct path from \( v \) to \( w \), which satisfies condition (1) trivially. Assume that for all distances less than \( n \), there exists a sequence of elements satisfying (1). Consider \( d(v,w) = n \). By (2), there exists an element \( x \) such that:
  \[
  d(v,x) = 1 \quad \text{and} \quad d(x,w) = n - 1.
  \]
  By the induction hypothesis, there exist sequences \( S_1 = \{v, x_1\} \) and \( S_2 = \{x_1, x_{2}, \ldots, x_n = w\} \), both of which satisfy condition (1). Concatenating these two sequences, we obtain a sequence of length \( n+1 \) that satisfies (1) for \( d(v,w) = n \), completing the proof.
\end{proof}

Let $V$ be a finite vector space endowed with an integral metric $d(\cdot,\cdot)$. Recall that for any $v \in V$ and $\rho \in \Nn$, the ball and sphere with radius $\rho$ centered at $v$ are respectively given by 
\[
B_{\rho}^d(v) := \left\{x \in V \mid d(v,x) \leq \rho \right\}, \quad S_{\rho}^d(v) := \left\{x \in V \mid d(v,x) = \rho \right\}.
\]

\begin{definition}(\cite{silva2008rank})\label{def: correction-normal}
Given a finite vector space $V$ with an integral metric $d(\cdot,\cdot)$, the \myfont{(distance)-correction capability} $\tau(v_1,v_2)$ of a pair of distinct vectors $v_1,v_2 \in V$ is the integer $\tau$ satisfying one of these equivalent statements:
\begin{itemize}
\item $
\tau = \min_{x \in V} \max\{d(v_1,x),d(x,v_2)\}-1;$

    \item $\tau$ is the largest integer such that
\[
B_{\tau}^d(v_1) \cap B_{\tau}^d(v_2) = \emptyset.
\]

\end{itemize}
Moreover, $d(\cdot,\cdot)$ is said to be \myfont{correction-normal} if for all $v_1,v_2 \in V$:
\[
\tau(v_1,v_2) = \left\lfloor \frac{d(v_1,v_2)-1}{2} \right\rfloor.
\]
\end{definition}
{
\color{black}

Informally, if the correction capability is $\tau$, this means that $v_1$ and $v_2$ can `distinguish' any vector that is at most distance $\tau$ away from one of $v_1$, $v_2$.  If an integral metric $d(\cdot,\cdot)$ is correction-normal, it fully describes the error correction capability between pairs of vector in a natural way, as the triangle inequality ensures that  $\tau(v_1,v_2) \geq \left\lfloor \frac{d(v_1,v_2)-1}{2} \right\rfloor$ is always satisfied.

\begin{definition}\label{def: equal-detection threshold}
Given a finite vector space $V$ with integral metric $d(\cdot,\cdot)$, the \myfont{equal-detection threshold} $\sigma_{\operatorname{eq}}(v_1,v_2)$ of a pair of vectors $v_1,v_2 \in V$ is the unique integer $s \in \Nn$ that, if such $s$ exists, satisfies one of these equivalent statements:
\begin{itemize}
\item 
\[
s = d(B_{s}^d(v_1),v_2) =d(B_{s}^d(v_2),v_1);
\]

\item 
\[
s = \min_{\substack{x \in V \\ d(v_1,x) \leq s}} \{d(x,v_2)\} = \min_{\substack{y \in V \\ d(y,v_2) \leq s}} \{d(v_1,y)\};
\]  
\item 
\[
B_s^d(v_1) \cap B_{s-1}^d(v_2) = B_{s-1}^d(v_1) \cap B_s^d(v_2) = \emptyset  \quad \text{ and } \quad 
S_s^d(v_1) \cap S_s^d(v_2) \neq \emptyset.
\]
\end{itemize}

If such $s$ does not exist, we set $\sigma_{\operatorname{eq}}(v_1,v_2) = 0$ as convention.
The metric $d(\cdot,\cdot)$ is called \myfont{equal-detection-normal} if for all $v_1,v_2 \in V$:
\[
d(v_1,v_2) = 2 \sigma_{\operatorname{eq}}(v_1,v_2)
\]
whenever $\sigma_{\operatorname{eq}}(v_1,v_2) \neq 0$.
\end{definition}
Note that uniqueness of $s$ (if it exists) follows from the fact that $d(B_{s}^d(v_1),v_2) \geq d(B_{s'}^d(v_1),v_2)
$ whenever $s < s'$.


An integral metric $d(\cdot,\cdot)$ being equal-detection normal indicates that vectors of odd distance apart have zero equal-detection threshold, and that all non-zero equal-detection thresholds are fully described by $d(\cdot,\cdot)$ in a natural way.

\begin{example}
    Consider the vector space $V = \Ff_2^4$ endowed with the weight function
    \[
    \wt_V(v) := \begin{cases}
       \wt_{\operatorname{H}}(v) & \text{ if } v \neq (1111)\\
        3 & \text{ if } v = (1111)
    \end{cases}
    \]
    for every $v \in V$, with $\wt_{\operatorname{H}}$ the Hamming weight. Since the metric $d_V$ induced by $\wt_V$ is the ordinary Hamming metric with one modification, it is still correction-normal but not equal-detection-normal anymore. This is because $\sigma_{\operatorname{eq}}(0000, 1111) = 2$ while $d_V(0000, 1111) = 3$. 
\end{example}

\begin{example}
    Consider the vector space $W = \Ff_2^2$ endowed with the weight function
    \[
    \wt_W(v) := 2
       \wt_{\operatorname{H}}(v) 
    \]
    for every $v \in W$, with $\wt_{\operatorname{H}}$ the Hamming weight. Now the metric $d_W$ induced by $\wt_W$ is equal-detection-normal but not correction-normal because $\tau(00, 10) = 1$ and $\lfloor (d_W(00, 10)-1)/2 \rfloor = 0$. 
\end{example}

The implication from convex to correction-normal is also discussed and proven in \cite{silva2008rank}.

\convexcorrectdetect*
\begin{proof}
 
Let $d(\cdot,\cdot)$ be an integral metric on a finite vector space $V$. 

Suppose $d(\cdot,\cdot)$ is convex, and let $v_1,v_2 \in V$ be distinct. Let $t = \left\lfloor (d(v_1,v_2)-1)/2 \right\rfloor$. By convexity there is an $x \in V$ with $d(v_1,x) = t+1$ and $d(x,v_2) = d(v_1,v_2)-(t+1) \leq t+1$ and so $B_{t+1}^d(v_1) \cap B_{t+1}^d(v_2) \neq \emptyset$. Hence $\tau(v_1,v_2) = t$ implying that $d(\cdot,\cdot)$ is correction-normal.

Moreover, if $d(v_1,v_2)$ is odd, then for $s \geq t+1$ it holds that $B_{s}^d(v_1) \cap B_{s-1}^d(v_2) \neq \emptyset$ and for $s \leq t$ that $S_{s}^d(v_1) \cap S_{s}^d(v_2) = \emptyset$. So in the case that $d(v_1,v_2)$ is odd we have $\sigma_{\operatorname{eq}}(v_1,v_2) = 0$. If  $d(v_1,v_2)$ is even, then $t+1 =  d(v_1,v_2)/2$, and for $s = t+1$ we have $B_s^d(v_1) \cap B_{s-1}^d(v_2) = B_{s-1}^d(v_1) \cap B_s^d(v_2) = \emptyset$  and $S_s^d(v_1) \cap S_s^d(v_2) \neq \emptyset$. This proves equal-detection-normality.\\

Now suppose $d(\cdot,\cdot)$ is correction-normal and equal-detection-normal.  Suppose by strong induction that convexity holds for any $v_1,v_2 \in V$ with $d(v_1,v_2) \leq d$. Let $w_1,w_2 \in  V$ with $d(w_1,w_2) = d+1$. Since $d(\cdot,\cdot)$ is correction-normal, there exists 
without loss of generality an $x \in V$ with $d(w_1,x) \leq \lfloor\frac{d}{2}\rfloor + 1$ and  $d(x,w_2) = \lfloor\frac{d}{2}\rfloor + 1$. If there is such an $x$ with $d(w_1,x) = \lfloor\frac{d}{2}\rfloor$, then by the triangle inequality applied to \(d(w_1,w_2)=d+1\), $d$ is even and
\[
d(w_1,x) = \tfrac{d}{2} \quad \text{and} \quad d(x,w_2)=\tfrac{d}{2}+1.
\]
By strong induction, the convexity property applied to $(w_1,x)$ and $(x,w_2)$ implies the existence of sequences  $(w_1 = x_1, x_2,\ldots,x_{d/2} = x)$  and  $(x_{d/2}, x_{d/2+1},\ldots,x_{d+1}=w_2)$ with $d(x_i, x_{i+1}) = 1$ for each $i$. Concatenation of these sequences shows the convexity property for $(w_1,w_2)$.

If on the other hand  there is no $x \in V$ with $d(w_1,x) < \lfloor\frac{d}{2}\rfloor + 1$ and  $d(x,w_2) = \lfloor\frac{d}{2}\rfloor + 1$ (nor with $w_1$ and $w_2$ swapped), but there does exist $x'\in V$ with $d(w_1,x') = d(x',w_2) = \lfloor\frac{d}{2}\rfloor + 1$, 
\[
\sigma_{\operatorname{eq}}(w_1,w_2) = \lfloor\tfrac{d}{2}\rfloor + 1 \quad \text{ and } \quad d(w_1,w_2) = d+1 = 2(\lfloor\tfrac{d}{2}\rfloor + 1)
\]
by equal-detection normality and hence $d$ is odd. Again by strong induction, the convexity property applied to $(w_1,x)$ and $(x,w_2)$ implies the existence of sequences  $(w_1 = x_1, x_2,\ldots,x_{\lfloor d/2 \rfloor + 1} = x)$  and  $(x_{\lfloor d/2 \rfloor + 1}, x_{\lfloor d/2 \rfloor + 2},\ldots,x_{d+1}=w_2)$ with $d(x_i, x_{i+1}) = 1$ for each $i$, and concatenation of these sequences shows the convexity property for $(w_1,w_2)$.
\end{proof}
}



\subsection{Matroids of Spanning Family fully describe intersections of two subspaces}\label{appendix: matroid subspacex intersection}
\begin{observation}
    \begin{enumerate}
        \item Let \(F,G \in M(\cF)\) and $c \subset F\cup G$ be a cycle then exist $a_f \in \bF\withoutzero,\; b_{g} \in \bF\withoutzero$ for all $f \in c \cap F$ and $g \in c \cap G$ such that 
        \[
        v_c \coloneqq \sum_{f \in F \cap c}a_ff = \sum_{g \in c \cap G}b_{g}g \in \langle c \cap F \rangle \cap \langle c \cap G \rangle \subset  \langle  F \rangle \cap \langle G \rangle
        \]
        \item Furthermore if $c \subset F \cup G$ is a circuit of $M(\cF)$ then \(\dim( \langle c \cap F \rangle \cap \langle c \cap G \rangle))=1\).
        \item For every \( x \in \fgcap{F}{G} \setminus \langle F \cap G\rangle\) there exist a cycle \(c \subset F \cup G\) such that \( x \in \cfgcap{c}{F}{G}\).
    \end{enumerate}
\end{observation}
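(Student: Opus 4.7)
The plan is to establish the three items in order, with (2) leveraging (1), and (3) being an independent constructive argument.

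For (1), I would start from the defining dependency of the cycle $c$: by the remark preceding the observation, there exist $\lambda_v\in\bF\withoutzero$ with $\sum_{v\in c}\lambda_v v=0$. After partitioning $c = (c\cap F)\cup(c\cap G)$ and rearranging, I set $a_f:=\lambda_f$ for $f\in (c\cap F)\setminus G$, $b_g:=-\lambda_g$ for $g\in(c\cap G)\setminus F$, and for elements $v\in c\cap F\cap G$ split $\lambda_v=a_v-b_v$ into two nonzero parts (feasible when $|\bF|\geq 3$). This yields
\[\sum_{f\in c\cap F}a_f f \;=\; \sum_{g\in c\cap G}b_g g \;=:\; v_c,\]
which by construction lies in $\cfgcap{c}{F}{G}$.

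For (2), the existence of $v_c\neq 0$ from (1) gives $\dim(\cfgcap{c}{F}{G})\geq 1$. For the matching upper bound, I would use the standard matroid fact that the space of linear dependencies supported on a circuit is one-dimensional. Any $w\in\cfgcap{c}{F}{G}$ admits representations $w=\sum_{f\in c\cap F}a'_f f=\sum_{g\in c\cap G}b'_g g$, and subtracting yields a dependency $\sum a'_f f - \sum b'_g g = 0$ supported on $c$. By uniqueness up to scalar of circuit dependencies, this must be a scalar multiple of the dependency used to build $v_c$, so $w$ is proportional to $v_c$.

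For (3), I would argue constructively. Choose representations $x=\sum_{f\in F}\alpha_f f=\sum_{g\in G}\beta_g g$ minimizing the combined support $|S_F\cup S_G|$, where $S_F:=\{f\in F:\alpha_f\neq 0\}$ and $S_G:=\{g\in G:\beta_g\neq 0\}$, and set $c:=S_F\cup S_G\subseteq F\cup G$. The inclusions $x\in\langle S_F\rangle\subseteq\langle c\cap F\rangle$ and $x\in\langle S_G\rangle\subseteq\langle c\cap G\rangle$ are immediate, so $x\in\cfgcap{c}{F}{G}$. It remains to verify that $c$ is a cycle, i.e.\ that the combined dependency $\sum\alpha_f f-\sum\beta_g g=0$ has all nonzero coefficients on $c$. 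The only possible obstruction is a cancellation at some $v\in S_F\cap S_G\subseteq F\cap G$ with $\alpha_v=\beta_v$; minimality of $|S_F\cup S_G|$, together with the hypothesis $x\notin\langle F\cap G\rangle$, is exactly what rules this out, since otherwise one could modify the representations to shrink the combined support while still representing $x$.

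The main obstacle will be the minimality argument in (3): one has to show rigorously that whenever the combined dependency cancels at an overlap element, the support can genuinely be shrunk (yielding a contradiction), and that the hypothesis $x\notin\langle F\cap G\rangle$ prevents the iteration from terminating trivially on $\langle F\cap G\rangle$. A minor additional care is needed in (1) when $|\bF|=2$, where the decomposition $\lambda_v=a_v-b_v$ with both $a_v,b_v$ nonzero is impossible; in that case the statement is understood under the implicit restriction $c\cap F\cap G=\emptyset$, which is automatic in the way $F$ and $G$ typically arise in the applications of this subsection.
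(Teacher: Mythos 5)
Your handling of items (1) and (2) is sound and essentially coincides with the paper's: the paper disposes of (1) in one line and proves (2) by exactly the fact you invoke (every proper subset of a circuit is independent, so dependencies supported on a circuit form a one-dimensional space), merely packaged as a direct rescaling argument. Your caveat about splitting $\lambda_v=a_v-b_v$ when $c\cap F\cap G\neq\emptyset$ points at a real issue that the paper glosses over; note that item (2) has the same hidden hypothesis, since if some $v\in c\cap F\cap G$ exists then $\langle c\cap F\rangle\cap\langle c\cap G\rangle$ contains both $v$ and $v_c$, which are not proportional, so the dimension is at least $2$ (e.g.\ $F=\{e_1,e_2\}$, $G=\{e_2,e_1+e_2\}$, $c=F\cup G$). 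Both your argument and the paper's for (2) tacitly assume $c\cap F\cap G=\emptyset$.

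The genuine gap is in item (3), which the paper's own proof does not address at all. Your minimization is vacuous: $F$ and $G$ are independent sets, so the representations $x=\sum_{f\in F}\alpha_f f=\sum_{g\in G}\beta_g g$ are \emph{unique}; there is exactly one candidate pair of supports $(S_F,S_G)$ and nothing to minimize over. Hence the obstruction you correctly identify --- an element $v\in S_F\cap S_G\subseteq F\cap G$ with $\alpha_v=\beta_v$ --- cannot be ruled out, and it genuinely occurs. Take $\cF\supseteq\{e_1,e_2,e_3,e_4,e_1+e_2\}\subset\bF_q^4$, $F=\{e_1,e_2,e_3\}$, $G=\{e_3,\,e_1+e_2,\,e_4\}$ and $x=e_1+e_2+e_3\in\fgcap{F}{G}\setminus\langle F\cap G\rangle$. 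By uniqueness of the representations over $F$ and over $G$, any $c$ with $x\in\cfgcap{c}{F}{G}$ must contain $\{e_1,e_2,e_3,e_1+e_2\}$; but no subset of $F\cup G$ containing $e_3$ carries a linear dependency with all coefficients non-zero, so no cycle works and item (3) fails as literally stated. What is true, and what the subsequent proposition actually uses, is the statement modulo $\langle F\cap G\rangle$: subtract $y:=\sum_{v\in F\cap G,\ \alpha_v=\beta_v}\alpha_v v\in\langle F\cap G\rangle$ from $x$, and then the combined dependency $\sum\alpha_f f-\sum\beta_g g$ has full support on the set $T:=(S_F\setminus G)\cup(S_G\setminus F)\cup\{v\in F\cap G:\alpha_v\neq\beta_v\}$, which is a non-empty cycle (non-emptiness follows from $x\notin\langle F\cap G\rangle$) satisfying $x-y\in\cfgcap{T}{F}{G}$. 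You should either prove this corrected version or explicitly flag that (3) needs the correction.
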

\begin{proof}
    (1) follows from the definition of cycle in $M(\cF)$. For (2), by (1) we have that $\dim(\cap F \rangle \cap \langle c \setminus \{\bar f\} \cap G \rangle) \geq 1$.
    We observe that for all $f \in c$, $c \setminus \{f\} \in M(\cF)$, thus \(\langle c \setminus \{f\} \cap F \rangle \cap \langle c \setminus \{f\} \cap G \rangle) = \{0\} \).
    Consider \(v,w \in \langle c \cap F \rangle \cap \langle c \cap G \rangle)\) non null. There exists $a_f,b_f \in \bF$ such that \(v = \sum_{f \in c \cap F} a_ff\) and \(w = \sum_{f \in c \cap F}b_ff \). Fix $\bar f$ such that $a_{\bar f} \neq 0$, then 
    \[
     w - b_{\bar f}a_{\bar f}^{-1}v \in \langle c \setminus {\bar f} \cap F \rangle \cap \langle c \setminus \{\bar f\} \cap G \rangle = \{0\}
    \]
    Thus \( w \in \Span(v)\) and $\dim(\cap F \rangle \cap \langle c \setminus \{\bar f\} \cap G \rangle) = 1$. 
\end{proof}

\begin{proposition}
    Let \(F,G \in M(\cF)\) such that \(\dim(\fgcap{F}{G})=d \geq 1\). If \(|F \cap G| = k\), then there exists \(c_1,\ldots,c_{d-k} \subset F \cup G\) circuits and $v_{c_i} \in \cfgcap{c_i}{F}{G}$ for \(i = 1,\ldots, d\) such that \(\langle v_{c_1},\ldots,v_{c_{d-k}}, F \cap G \rangle = \fgcap{F}{G}\). We refer to \(\{v_{c_i}\}_{i \in [d]}\) and to \(\{c_i\}_{i \in [d]}\)  as a \emph{bases of circuits}. \\ Further more we can chose a bases of circuits such that for all \(i = 1, \ldots, d-k\) exists \(e_i \in c_i \setminus \cup_{j \neq i}c_j\).
\end{proposition}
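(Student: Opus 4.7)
My plan is to produce the circuits as \emph{fundamental circuits} of an extension of $F$ to a basis of $\langle F\cup G\rangle$. The Grassmann identity gives $\dim\langle F\cup G\rangle = |F|+|G|-d$, so the nullity of $F\cup G$ in $M(\cF)$ is $|F\cup G|-\dim\langle F\cup G\rangle = (|F|+|G|-k)-(|F|+|G|-d) = d-k$: exactly the number of circuits required, which is also the structural reason a basis of circuits of this size must exist. By iterating the matroid augmentation axiom with any basis of $F\cup G$ containing $F$, I can extend $F$ to a basis $B = F\cup F'$ of $\langle F\cup G\rangle$ using only elements of $G\setminus F$, with $|F'|=|G|-d$. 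The remaining $d-k$ elements of $G\setminus(F\cup F')$ are exactly the distinguished elements $e_1,\dots,e_{d-k}$ of the statement.

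For each $e_i$, let $c_i$ be the fundamental circuit of $e_i$ with respect to $B$. Then $c_i\subseteq F\cup G$ is a circuit of $M(\cF)$, and observation (2) yields a (scalar-unique) vector $v_{c_i}\in\cfgcap{c_i}{F}{G}\subseteq\fgcap{F}{G}$. The defining property of fundamental circuits gives $e_i\in c_i\setminus\bigcup_{j\neq i}c_j$, which already takes care of the final ``furthermore'' clause. Writing the circuit relation as $\sum_{x\in c_i}a_x^{(i)}x=0$ with all $a_x^{(i)}\neq 0$, we get $v_{c_i}=\sum_{x\in c_i\cap F}a_x^{(i)}x=-\sum_{x\in c_i\cap(G\setminus F)}a_x^{(i)}x$, exhibiting $v_{c_i}$ simultaneously as an element of $\langle F\rangle$ and of $\langle G\rangle$.

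To finish, I would check that $\{v_{c_1},\dots,v_{c_{d-k}}\}\cup(F\cap G)$ has $d$ linearly independent elements, hence spans $\fgcap{F}{G}$. Suppose $\sum_i\beta_i v_{c_i}+\sum_{h\in F\cap G}\alpha_h h=0$ and substitute the second expression for each $v_{c_i}$. The result is a linear combination of elements of the independent set $G$ (since $c_i\cap(G\setminus F)\subseteq G$ and $F\cap G\subseteq G$), so every coefficient vanishes. The coefficient of $e_i$ is $-\beta_i a_{e_i}^{(i)}$, because $e_i\in c_i$ and $e_i\notin c_j$ for $j\neq i$; this forces $\beta_i=0$, after which $\alpha_h=0$ follows from linear independence of $F\cap G\subseteq G$. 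The main obstacle I expect is simply bookkeeping: tracking which elements of $F\cup G$ lie in $F\setminus G$, $F\cap G$, $F'$, or $\{e_1,\dots,e_{d-k}\}$, and verifying that augmentation of $F$ uses only elements of $G\setminus F$. These are routine but must be recorded carefully for the coefficient-of-$e_i$ step to be unambiguous.
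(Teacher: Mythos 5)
Your proof is correct, and it verifies the spanning claim by a genuinely different mechanism than the paper. The paper first proves (by induction on the length of a minimal cycle through a given vector) that the $v_c$ over \emph{all} circuits $c\subseteq F\cup G$ together with $F\cap G$ span $\fgcap{F}{G}$, and then, for the ``furthermore'' clause, picks a maximal independent set $S\subseteq F\cup G$ (not required to contain $F$) and shows by an explicit two-step coefficient-elimination that the fundamental circuits relative to $S$ already suffice. You instead choose the maximal independent set to contain $F$, i.e.\ $B=F\cup F'$ with $F'\subseteq G\setminus F$, which makes each $v_{c_i}$ visibly equal to $-\sum_{x\in c_i\cap(G\setminus F)}a^{(i)}_x x$, a combination of elements of the independent set $G$ in which $e_i$ appears with nonzero coefficient and appears in no other $v_{c_j}$ nor in $F\cap G$; this gives linear independence of $\{v_{c_1},\dots,v_{c_{d-k}}\}\cup(F\cap G)$ directly, and the count $|G\setminus(F\cup F')|=d-k$ from the Grassmann identity then forces spanning by dimension. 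This bypasses the paper's inductive first half entirely and replaces the elimination argument by a one-line independence check, at the cost of the (routine) bookkeeping you flag: that the augmentation uses only elements of $G\setminus F$, that each fundamental circuit $c_i$ meets $F\setminus G$ (so $v_{c_i}\neq 0$, since a circuit cannot lie inside the independent set $G$), and that $v_{c_i}\in\cfgcap{c_i}{F}{G}$ because $c_i\setminus F\subseteq c_i\cap G$. All of these checks go through, so the argument is complete.
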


\begin{proof}
    Let \(C\) be the set of circuits contained in \(F \cup G\). For all \(c \in C\) pick \(v_c \in \cfgcap{c}{F}{G}\). We only need to prove that \(\langle v_c \mid c \in C \rangle + \langle F \cap G\rangle = \fgcap{F}{G}\). Clearly  \( \langle v_c \mid c \in C \rangle + \langle F \cap G\rangle \subset \fgcap{F}{G}\). Viceversa, let \( v \in \fgcap{F}{G} \setminus \langle F \cap G \rangle \) and \(\bar c \subset F \cup G\) be a cycle with minimum cardinality \(k = |\bar c|\) such that \( v \in  \cfgcap{\bar c}{F}{G}\). We prove that \(v \in \langle v_c \mid c \in C \rangle + \langle F \cap G\rangle\) by induction on \(k\). If \(k = 2\) then \(\bar c\) is a circuit therefore \(  v \in  \cfgcap{\bar c}{F}{G} \subset \langle v_c \mid c \in C \rangle \). For the induction step. Since \(v \notin \langle F \cap G \rangle\), there exists \(e \in \bar c \setminus (F \cap G)\) and  \(e \in c \subset \bar c\) such that \(c \) is a circuit. If \(c = \bar c\) we finished. Otherwise let \(a_f,b_f \neq 0\) such that \(v = \sum_{f \in \bar c \cap F}a_ff\) and \(v_c = \sum_{f \in c \cap F}b_ff\).  Then \( 0 \neq w \coloneqq v - a_eb_e^{-1}v_c \in \fgcap{F}{G} \neq 0\) because \( c \neq \bar c\) and then \(w \in \cfgcap{\bar c \setminus \{e\}}{F}{G} \neq \{0\}\). If \(w \in \langle F \cap G \rangle\) we finished. Otherwise, \(\bar c \setminus \{e\}\) is a cycle and \(|\bar c \setminus \{e\}|=k-1\). By induction we have \(w \in \langle v_c \mid c \in C \rangle + \langle F \cap G \rangle\). Thus \(v = w + a_eb_e^{-1}v_c \in \langle v_c \mid c \in C \rangle + \langle F \cap G\rangle\) and this conclude the first part of the proof. \hfill \break 
    
    We now want to prove that we can chose a bases of circuits such that for all \(i = 1, \ldots, d-k\) exists \(e_i \in c_i\) such that \(e_i \notin \cup_{j \neq i}c_j\). Let \(S \subset F \cup G\) be maximal among the elements of \(M(\cF)\) contained in \( F \cup G\). For all \(e \in F \cup G \setminus S\) let \(c_e\) be the only circuit contained in \(S \cup \{e\}\). Clearly \(\langle v_{c_e} \mid e \in F \cup G \setminus S \rangle + \langle F \cap G \rangle\subset \langle v_c \mid c \in C\rangle = \fgcap{F}{G}\). Viceversa consider \( v \in \fgcap{F}{G} \setminus \langle F \cap G \rangle\). There exists \(a_f,b_g \in \bF\) for all \(f \in F, g\in G\) and \(m \in \langle F \cap G \rangle \)such that
    \[
    v' := v - m = \sum_{s \in S \cap F}a_ss + \sum_{f \in F \setminus S}a_ff = \sum_{s \in S \cap G}b_ss + \sum_{g \in G \setminus S}a_gg.
    \]
    Since \(S \in M(\cF)\), then \(\cfgcap{S}{F}{G} = \{0\}\). If \(a_f = 0,b_g =0\) for all \(f \in F \setminus S, \, g \in G \setminus S\) then \(v \in \cfgcap{S}{F}{G} = \{0\}\) thus \( v = 0\). Otherwise we "eliminate" the non null coefficients in \(G\setminus S)\) of \(v\), since \(v_{c_g} \in \langle F \rangle\): \[
    v'' \coloneqq v - m - \sum_{\substack{g \in G \setminus S \\ b_g \neq 0}}b_g\pi_g(v_{c_g})^{-1}v_{c_g} \in \fgcap{G \cap S}{F}
    \]
    Where \(\pi_g(v_{c_e}) \in \bF \withoutzero\) are the coefficients such that \(v_{c_e} = \sum_{g \in G \cap c_e}\pi_g(c_{c_e})g\).Then we "eliminate" the non null coefficients in \(F \setminus S\) of \(v\), since for all \(f \in F \setminus S\) we have \(v_{c_e} \in \langle G \cap S \rangle\):
    \[
    v''' \coloneqq v'' - \sum_{\substack{f \in F \setminus S \\ a_f \neq 0}}a_f\pi_f(v_{c_f})^{-1}v_{c_f} \in \fgcap{G \cap S}{F\cap S} = \{0\}
    \]
    Thus \(v''' = 0\) and \[v =  \sum_{\substack{f \in F \setminus S \\ a_f \neq 0}}a_f\pi_f(v_{c_f})^{-1}v_{c_f} + \sum_{\substack{g \in G \setminus S \\ b_g \neq 0}}b_g\pi_g(v_{c_g})^{-1}v_{c_g} + m \in \langle v_{c_e} \mid F \cup G \setminus S \rangle + \langle F \cap G\rangle  \]
\end{proof}
\subsection{Extended Family of Phase Rotation Metric Example}

\begin{example}\label{appendix: example matroid equivalence}
Let \(V\) be a vector space of dimension \(N\) over a finite field \(\mathbb{F}\). Let \(\cB \subset V\) be a basis of \(V\), and define
\[
w \coloneqq \sum_{v \in \cB} v.
\]
Consider the projective metric on \(V\) induced by the spanning family \(\cF \coloneqq \cB \cup \{w\}\). This metric corresponds to the Phase Rotation metric (\ref{example: Phase Rotation Metric}) in the space \(\mathbb{F}^N\), where vectors are expressed in the basis \(\cB\).

\medskip

We claim that the extended family \(\overline{\cF}\) of \(\cF\) is
\[
\overline{\cF} \coloneqq \Bigl\{\, w - \sum_{g \in I} g \;\Bigm|\; I \subset \cB \Bigr\},
\]
i.e., the set of all vectors in \(V\) whose coordinates (in the basis \(\cB\)) take at most two values, one of which is zero. The proof proceeds by induction on \(N\). The base case (N=1) is trivial, since \(V\) is one-dimensional and \(\cB\) has only one vector.
For the inductive step, assume the statement holds for all dimensions less than \(N\). All the vectors in \(\overline{\cF}\) are such that \(\langle v \rangle = \bigcap_{i=1}^{N-1} \langle G_i \rangle\), where \(G_i \in M(\cF)\) and \(\lvert G_i \rvert = N-1\). We must show that 
\[
\spn{v} \;=\; \spn{\,w - \sum_{g \in I}g\,}
\quad
\text{for some } I \subset \cB.
\]

\medskip

\noindent
\textbf{Case 1:} Suppose \(w \in G_i\) for all \(i\). Then
\[
\langle v \rangle \;=\; \bigcap_{i=1}^{N-1} \spn{G_i} \;=\; \spn{w}.
\]

\noindent
\textbf{Case 2:} There exists some \(G_i\) with \(w \notin G_i\). That is \(G_i = \cB \setminus \{g'\}\) for some \(g' \in \cB\).
Set \(V' \coloneqq \langle G_i \rangle\). Since \(G_i\) is a set of \((N-1)\) linearly independent vectors, it forms a basis \(\cB' = G_i \subset V'\). Define 
\[
w' \coloneqq \sum_{g \in \cB'} g = w - g'
\quad\text{and}\quad
\cF' \coloneqq \cB' \cup \{w'\}.
\]
We aim to show that for all other \(j\), the intersection
\[
\langle G_j \rangle \,\cap\, \langle G_i \rangle
\]
is a hyperplane in \(V'\) generated by a subset of \(\cF'\). Then, by the induction hypothesis (applied to the space \(V'\) of dimension \((N-1)\)), we conclude that 
\(\langle v \rangle\) is spanned by a vector in \(\overline{\cF'}\subset \overline{\cF}\), of the form \(w' - \sum_{g \in I'} g = w - g' - \sum_{g \in I'}g\). Translating this back to the original family \(\cF\), we get the desired vector \(w - \sum_{g \in I} g\), where \(I = I' \cup \{g'\}\).

\medskip

\noindent
\textbf{Subcase 2a:} If 
\(\lvert G_i \cap G_j \rvert = N-2\), then
\[
H_{ij} \coloneqq \langle G_j \rangle \,\cap\, \langle G_i \rangle 
= \langle G_i \cap G_j \rangle,
\]
 is a hyperplane in \(V'\) generated by elements of \(G_i \subset \cF'\).

\medskip

\noindent
\textbf{Subcase 2b:} 
Otherwise, if \(\lvert G_i \cap G_j \rvert = N-3\), we have $w, g' \in G_j$.
Furthermore, since \(w' \in \spn{G_i}\) 
we have \(
w' 
= w \;-\; g'
\) is also in \(\spn{G_j}\). Thus,
\[
\langle G_j \rangle \,\cap\, \langle G_i \rangle 
= \langle G_i \cap G_j \rangle + \langle w' \rangle,
\]
which is also a hyperplane in \(V'\) generated by elements in \(\cF'\).

\medskip

By induction, every one-dimensional intersection \(\langle v \rangle\) is spanned by some vector of the form \(w - \sum_{g \in I} g\). Hence
\(\overline{\cF} = \bigl\{ w - \sum_{g \in I} g : I \subset \cB \bigr\}\), as claimed.
\end{example}


\end{appendices}

\newpage
\nocite{*}
\bibliographystyle{plain}
\bibliography{biblio}

\end{document}